\documentclass[a4paper,10pt]{article}
\usepackage[utf8]{inputenc}

\usepackage{comment,xcomment,amssymb,amsmath,color}
\usepackage{rotating,xypic,array}
\usepackage{latexsym,mathtools,amsthm}
\usepackage{subfig,rotating}
\usepackage{tikz,fancyvrb}
\usepackage{booktabs}  
\usetikzlibrary{arrows.meta}
\usepackage{enumitem}
\usepackage{url} 
\usepackage{longtable}
\usepackage[section]{placeins}
\usepackage[longtable]{multirow}
\usepackage[colorlinks,citecolor=blue]{hyperref}

\usepackage{pdflscape}

\makeatletter
\def\namedlabel#1#2{\begingroup
   \def\@currentlabel{#2}%
   \label{#1}\endgroup
}
\makeatother
\usepackage{geometry}

\title{Nice pseudo-Riemannian nilsolitons\\
\small{or, how to do math with a hammer}}
\author{Diego Conti and Federico A. Rossi}

\newtheorem{theorem}{Theorem}[section]
\newtheorem{lemma}[theorem]{Lemma}

\newtheorem{corollary}[theorem]{Corollary}
\newtheorem{proposition}[theorem]{Proposition}
\theoremstyle{definition}

\newtheorem{example}[theorem]{Example}
\theoremstyle{remark}
\newtheorem{remark}[theorem]{Remark}

\newcommand{\abs}[1]{\left\vert#1\right\vert}
\newcommand{\R}{\mathbb{R}}
\newcommand{\im}{\mathrm{Im}\,}         
\newcommand{\lie}[1]{\mathfrak{#1}}     
\newcommand{\g}{\lie{g}}
\newcommand{\Z}{\mathbb{Z}}

\newcommand{\C}{\mathbb{C}}

\newcommand{\hook}{\lrcorner\,}

\newcommand{\id}{\operatorname{Id}}   

\newcommand{\Span}[1]{\operatorname{Span}\left\{#1\right\}}
\newcommand{\tran}[1]{\hspace{.2mm}\prescript{t\hspace{-.5mm}}{}{#1}}
\DeclareMathOperator{\Ric}{Ric} 

\DeclareMathOperator{\Der}{Der}
\DeclareMathOperator{\ad}{ad}
\DeclareMathOperator{\coker}{coker}
\DeclareMathOperator{\logsign}{logsign}

\DeclareMathOperator{\Tr}{tr}
\DeclareMathOperator{\sign}{sign}

\newcolumntype{C}{>{$}c<{$}}
\newcolumntype{L}{>{$}l<{$}}
\newcolumntype{R}{>{$}r<{$}}

\newcommand{\tableottocoker}{A.1}
\newcommand{\tablenovecoker}{A.2}
\newcommand{\tableottoriemannian}{A.3}
\newcommand{\tablenoveriemannian}{A.4}
\begin{document}
\VerbatimFootnotes
\maketitle
\begin{abstract}
We study nice nilpotent Lie algebras admitting a diagonal nilsoliton metric. We classify nice Riemannian nilsolitons
up to dimension $9$. For general signature, we show that determining whether a nilpotent nice Lie algebra admits a nilsoliton metric reduces to a linear problem together with a system of as many polynomial equations as the corank of the root matrix. We classify nice nilsolitons of any signature: in dimension $\leq 7$; in dimension $8$ for corank $\leq 1$; in dimension $9$ for corank zero.
\end{abstract}

\renewcommand{\thefootnote}{\fnsymbol{footnote}}
\footnotetext{\emph{MSC class 2020}: \emph{Primary} 53C25; \emph{Secondary} 53C50, 53C30, 22E25}
\footnotetext{\emph{Keywords}: Einstein metrics, nilsoliton, nice Lie algebras, pseudo-Riemannian homogeneous metrics.}
\renewcommand{\thefootnote}{\arabic{footnote}}

\section*{Introduction}
A left-invariant metric on a nilpotent Lie group, or equivalently a metric on its Lie algebra, is called a \emph{nilsoliton} if
\begin{equation}
\label{eqn:nilsoliton}
\Ric=\lambda\id + D, \quad D\in\Der \g.
\end{equation}
The terminology is motivated by the fact that solutions of~\eqref{eqn:nilsoliton} are solitons for the Ricci flow (see~\cite{Lauret:RicciSoliton}). In the Riemannian setting, nilsolitons are studied because of their relation to Einstein solvmanifolds (\cite{Heber:noncompact,Lauret:Einstein_solvmanifolds}). In the pseudo-Riemannian case, a similar relation exists, but it is more complicated, depending on whether the scalar curvature is zero and the solvable Lie algebra is unimodular (see~\cite{ContiRossi:IndefiniteNilsolitons}). We are interested in the case that the Einstein solvable Lie algebra is nonunimodular and the scalar curvature is nonzero; this is the case that most resembles the Riemannian situation, although one has to put restrictions on the geometry in order to obtain a precise correspondence.

Given a Lie algebra $\tilde \g$ with a fixed metric, a pseudo-Iwasawa decomposition is an orthogonal splitting $\tilde\g=\g\oplus^\perp\lie a$, with $\g$ a nilpotent ideal and $\lie a$ an abelian subalgebra acting on $\g$ by self-adjoint derivations. If $\tilde \g$ is further assumed to be nonunimodular and with an Einstein metric of nonzero scalar curvature, then $\g$ is the nilradical and the induced metric satisfies~\eqref{eqn:nilsoliton} with $\lambda$ and $D$ nonzero; in the language of~\cite{ContiRossi:IndefiniteNilsolitons}, such a nilsoliton will be said to be of type Nil4. Conversely, every nilsoliton of type Nil4 can be extended to an Einstein, nonunimodular solvable Lie algebra of nonzero scalar curvature with a pseudo-Iwasawa decomposition (see~\cite{ContiRossi:IndefiniteNilsolitons}). In particular, one can choose an extension of the form $\g\rtimes_D\R$ (see also~\cite{Yan:Pseudo-RiemannianEinsteinhomogeneous}).

The goal of this paper is producing a large number of nilsolitons of any signature (including Riemannian). We consider nilpotent Lie algebras admitting a nice basis $\{e_i\}$: if $\{e^i\}$ is the dual basis, this means that each $[e_i,e_j]$ and each $e_i\hook de^j$ is a multiple of an element of the basis. Nice Lie algebras were introduced in~\cite{LauretWill:Einstein}, with the main motivation that the Ricci tensor of a metric which is diagonal relative to a nice basis is also diagonal. Riemannian nice nilsolitons are classified up to dimension $7$ (see \cite{Lauret:Finding,Will:RankOne,FernandezCulma}); some special classes of nilsolitons of dimension $8$ are classified in \cite{KadiogluPayne,Arroyo:Filiform}.

We point out that for diagonal metrics on a nice Lie algebra, the Ricci tensor is always diagonalizable; accordingly, all nilsoliton metrics in this paper have $D$ diagonalizable, though this is not always the case for indefinite nilsolitons (see~\cite{ContiRossi:IndefiniteNilsolitons}).

Nilpotent nice Lie algebras are classified up to dimension $9$ (see \cite{FernandezCulma,LauretWill:OnTheDiagonalization,ContiRossi:Construction}). Much information on the structure of a nice Lie algebra is encoded in its root matrix $M_\Delta$. In particular, for a fixed nice Lie algebra, one can consider deformations of the Lie bracket obtained by varying the nonzero structure constants relative to the nice basis, and the corank of $M_\Delta$ (i.e. the dimension of its cokernel) governs the dimension of this space of deformations taken up to diagonal rescalings and without taking into account the Jacobi equality (see \cite[Proposition~2.2]{ContiRossi:Construction}).

It was shown in~\cite{Nikolayevsky} that the problem of determining whether a nice Lie algebra admits a (diagonal) Riemannian nilsoliton metric can be reduced to solving a set of linear equalities and inequalities. This follows from the convexity of the scalar curvature functional, which does not hold for general pseudo-Riemannian metrics. However, we show that nice Lie algebra admitting diagonal nilsoliton metrics can be identified by solving a set of linear equalities and inequalities, together with as many polynomial equations as the corank of $M_\Delta$ (Corollary~\ref{cor:KHLP}). Applying this result to the list of nice nilpotent Lie algebras constructed in~\cite{ContiRossi:Construction}, we classify:
\begin{itemize}
\item Riemannian diagonal nice nilsolitons of dimension $\leq 9$;
\item pseudo-Riemannian diagonal nice nilsolitons of dimension $\leq 7$;
\item pseudo-Riemannian diagonal nice nilsolitons of dimension $8$ such that $\dim\coker M_\Delta\leq 1$;
\item pseudo-Riemannian diagonal nice nilsolitons of dimension $9$ such that $\dim\coker M_\Delta=0$.
\end{itemize}
We note that Riemannian nilsoliton metrics can be turned into indefinite nilsolitons by reversing the sign of the metric on a subspace spanned by appropriately chosen nice basis elements
(see Proposition~\ref{prop:wick}); a version of this observation was already considered in~\cite{Yan:Pseudo-RiemannianEinsteinhomogeneous}. For each nice nilpotent Lie algebra of dimension $\leq 9$ not included in our low-corank classifications mentioned above, we list the signatures of the nilsoliton metrics obtained in this way.

We point out that the nilsoliton metrics obtained in this paper determine Einstein nice solvable Lie algebras in one dimension higher (see Proposition~\ref{prop:NicePseudoIwasawa}).

\medskip
\noindent \textbf{Acknowledgments:} The authors acknowledge GNSAGA of INdAM. F.A.~Rossi also acknowledges the Young Talents Award of Universit\`{a} degli Studi di Milano-Bicocca joint with Accademia Nazionale dei Lincei.

\section{Nice pseudo-Riemannian nilsolitons}\label{sec:NicePseudoIwasawa}
In this section we study the nilsoliton equation for diagonal pseudo-Riemannian metrics on nice nilpotent Lie algebras. We give a characterization of nice nilpotent Lie algebras admitting a diagonal, non-Einstein nilsoliton metric, preparing for the classifications in later sections.

Recall from~\cite{ContiRossi:IndefiniteNilsolitons} that a metric $\langle,\rangle$ on a nilpotent Lie algebra is called a \emph{nilsoliton of type Nil4} if
\[\Ric=\lambda \id +D, \quad D\in\Der\g,\ \lambda\neq0,\ D\neq 0;\]
since we are considering diagonal metrics on nice Lie algebras, for which $\Ric$ is diagonalizable, all non-Einstein nilsolitons are of this type (see \cite[Theorem~2.1]{ContiRossi:IndefiniteNilsolitons}).

We begin by stating some results on nice nilpotent Lie algebras. A \emph{nice Lie algebra} is a pair $(\g,\mathcal{B})$ where $\g$ is a Lie algebra and $\mathcal{B}=\{e_1,\dotsc, e_n\}$ is a basis of $\g$ such that each bracket $[e_i,e_j]$ is a multiple of some $e_k$ and each interior product $e_i\hook de^j$ is a multiple of some $e^k$, having denoted by $e^1,\dotsc, e^n$ the dual basis. To a nice Lie algebra, we can associate a directed graph $\Delta$ with arrows labeled by nodes, called its \emph{nice diagram}: the nodes of $\Delta$ are the elements of the nice basis, and $e_i\xrightarrow{e_j}e_k$ is an arrow if $[e_i,e_j]$ is a nonzero multiple of $e_k$. To a nice diagram we associate the \emph{root matrix} $M_\Delta$, which has a row for every $(\{i,j\},k)$ such that $[e_i,e_j]$ is a nonzero multiple of $e_k$; the row associated to $(\{i,j\},k)$ has $+1$ in position $k$, $-1$ in positions $i$ and $j$, and zeroes in the other entries. The structure of the Lie algebra $\g$ is then completely determined by $M_\Delta$ and the collection of structure constants $\{c_{ij}^k\}$, where
\[[e_i,e_j]=c_{ij}^ke_k, \quad i<j.\]
We will represent the structure constants by a vector $c=(c_{ij}^k)$ by fixing the same ordering used to list the rows of $M_\Delta$. We will denote by $M_{\Delta,2}$ the $\operatorname{mod} 2$ reduction of $M_{\Delta}$.

Given a vector $v=(a_1,\dotsc, a_n)\in\R^n$, we denote by $v^D$ the diagonal matrix with elements $a_1,\dotsc, a_n$ on the diagonal. By \cite[Lemma~2]{Payne:TheExistence}, the space of diagonal derivations of a nice Lie algebra $(\g,\mathcal{B})$
is given by
\[\{v^D\mid v\in\ker M_\Delta\}.\]
We denote by $[1]_k$, or simply $[1]$ when $k$ is implied by the context, the vector in $\R^k$ with all entries equal to $1$.

Recall that a derivation $N$ on a Lie algebra $\g$ is called a \emph{Nikolayevsky} (or \emph{pre-Einstein}) \emph{derivation} if it is semisimple and
\begin{equation}
 \label{eqn:nik}
 \Tr(NX)=\Tr X, \quad X\in\Der\g.
\end{equation}
By~\cite{Nikolayevsky}, every Lie algebra admits a unique Nikolaevsky derivation up to automorphisms. On a nice Lie algebra, we can fix a canonical Nikolayevsky derivation:
\begin{proposition}[{\cite[Theorem~3.1]{Payne:Applications}}]
\label{prop:payne}
Let
$b$ be a solution to
\[M_\Delta\tran{M_\Delta} b =[1].\]
and let $v=\tran{M_\Delta} b+[1]$. Then $N=v^D$ is a Nikolayevsky derivation.
\end{proposition}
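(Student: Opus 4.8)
The plan is to verify the two defining properties of a Nikolayevsky derivation separately: that $N$ is a semisimple derivation, and that it satisfies the trace identity~\eqref{eqn:nik}. The first is immediate once we show $v\in\ker M_\Delta$. Indeed, the row of $M_\Delta$ associated to $(\{i,j\},k)$ has a single $+1$ and two $-1$'s, so pairing each row with $[1]$ gives $-1$; hence $M_\Delta[1]=-[1]$ (with $[1]$ of the appropriate length on each side). Combined with the hypothesis $M_\Delta\tran{M_\Delta}b=[1]$ this yields $M_\Delta v=M_\Delta\tran{M_\Delta}b+M_\Delta[1]=[1]-[1]=0$, so $v\in\ker M_\Delta$ and $N=v^D$ is a diagonal derivation by \cite[Lemma~2]{Payne:TheExistence}. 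Being diagonal, $N$ is in particular semisimple.

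For the trace identity, I would first check it against the torus $\lie t=\{w^D\mid w\in\ker M_\Delta\}$ of diagonal derivations. On $\lie t$ the trace form is the Euclidean one, $\Tr(v^Dw^D)=\langle v,w\rangle$, and $\Tr(w^D)=\langle[1],w\rangle$. Thus $\Tr(Nw^D)=\Tr(w^D)$ for every $w\in\ker M_\Delta$ is equivalent to $v-[1]\perp\ker M_\Delta$, that is $v-[1]\in\operatorname{Im}\tran{M_\Delta}$; and this is exactly what the formula $v=\tran{M_\Delta}b+[1]$ provides. So $\Tr(NX)=\Tr X$ holds for all $X\in\lie t$.

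It then remains to propagate the identity from $\lie t$ to all of $\Der\g$, and here the key structural input is that $\lie t$ is a maximal torus of $\Der\g$ for a nice basis. Granting this, decompose $\Der\g=\bigoplus_\lambda(\Der\g)_\lambda$ into weight spaces for $\ad\lie t$. If $\lambda\neq0$, pick $H\in\lie t$ with $\lambda(H)\neq0$; from $[H,X]=\lambda(H)X$, the identity $[H,NX]=\lambda(H)NX$ (using $[H,N]=0$), and $\Tr[H,\,\cdot\,]=0$, one gets $\Tr X=0=\Tr(NX)$, so the identity holds trivially on these blocks. On the zero-weight space $Z(\lie t)$, maximality gives $Z(\lie t)=\lie t\oplus\lie n_0$ with $\lie n_0$ consisting of nilpotent derivations commuting with $\lie t$; for such $X$ one has $\Tr X=0$, while $NX$ is again nilpotent (a commuting product with a nilpotent factor), so $\Tr(NX)=0$ as well. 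Together with the computation on $\lie t$ from the previous step, this establishes $\Tr(NX)=\Tr X$ on every summand, hence on all of $\Der\g$, so $N$ is a Nikolayevsky derivation.

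I expect the maximality of the diagonal torus to be the only genuinely delicate point; everything else is linear algebra over $\ker M_\Delta$ together with elementary trace bookkeeping in the weight decomposition. An alternative route that sidesteps the weight-space analysis is to invoke that a Nikolayevsky derivation exists and is unique up to automorphism (as recalled above, after~\cite{Nikolayevsky}): being semisimple it lies in a maximal torus, which by maximality of $\lie t$ may be conjugated onto $\lie t$, and its image must then coincide with $N$ since the trace form on $\lie t$ is positive definite and hence pins down a unique element dual to $\Tr|_{\lie t}$.
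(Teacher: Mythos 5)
The paper itself offers no proof of this proposition --- it is quoted verbatim from Payne --- so your argument can only be judged on its own terms. Its first two steps are correct and complete: since every row of $M_\Delta$ sums to $-1$ you get $M_\Delta[1]=-[1]$, hence $M_\Delta v=0$ and $N=v^D$ is a semisimple (diagonal) derivation; and the identity $\Tr(Nw^D)=\Tr(w^D)$ for all $w\in\ker M_\Delta$ is precisely the condition $v-[1]\in\im\tran{M_\Delta}$, which holds by construction.

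The gap is in the third step, and it is not a cosmetic one: the ``key structural input'' you grant --- that $\lie t=\{w^D\mid w\in\ker M_\Delta\}$ is a maximal torus of $\Der\g$ --- is false for some nice nilpotent Lie algebras, including ones appearing in this paper's own tables. Take $\texttt{631:5a}=(0,0,0,e^{12},e^{13},e^{35}+e^{24})$. Here $\ker M_\Delta=\{(v_1,v_2,v_2,v_1+v_2,v_1+v_2,v_1+2v_2)\}$ is two-dimensional, so $e_2,e_3$ carry the same $\lie t$-weight, as do $e_4,e_5$. The endomorphism $A$ with $Ae_1=0=Ae_6$, $Ae_2=e_3$, $Ae_3=-e_2$, $Ae_4=e_5$, $Ae_5=-e_4$ is a derivation (checking all brackets; e.g.\ $[Ae_2,e_5]+[e_2,Ae_5]=[e_3,e_5]-[e_2,e_4]=-e_6+e_6=0$), is semisimple, and commutes with $\lie t$, so $\lie t\oplus\R A$ is a strictly larger torus. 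Hence your decomposition $Z(\lie t)=\lie t\oplus\lie n_0$ with $\lie n_0$ nilpotent fails, and the zero-weight block of your argument does not cover semisimple elements such as $A$; for this $A$ the identity $\Tr(NA)=0=\Tr A$ does hold, but for a reason your proof never isolates (namely $A$ is traceless on each weight space while $N$ acts there by a scalar). The alternative route at the end inherits the same defect and additionally relies on conjugacy of maximal tori, which fails over $\R$ (split versus anisotropic tori need not be conjugate). A complete proof must control the full centralizer of $\lie t$ --- for instance by splitting the semisimple part of an element of $Z(\lie t)$ into a split piece, which must be shown to lie in $\lie t$ (a maximal \emph{split} torus statement that itself requires proof), and an anisotropic piece, which is traceless on each $N$-eigenspace --- and none of this is supplied.
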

We will refer to the derivation $v^D$ of Proposition~\ref{prop:payne} as the \emph{diagonal Nikolayevsky derivation}.

\begin{remark}
If the eigenvalues of the Nikolayevsky derivation are distinct, the Lie algebra is nice
(\cite{Nikolayevsky:EinsteinDerivation}).
\end{remark}

A diagonal metric on a nice Lie algebra of dimension $n$ takes the form
\[g_1e^1\otimes e^1+\dots + g_ne^n\otimes e^n;\]
accordingly, we will identify the metric with the vector $(g_1,\dotsc, g_n)\in\R^n$. We will say that the \emph{signature} of the metric is the vector $\delta=(\delta_1,\dotsc, \delta_n)$ in $(\Z_2)^n$, where $\delta_i$ is zero or one accordingly to whether $g_i$ is positive or negative; notice that $\delta$ determines the signature in the usual sense. We also write $\logsign g_i=\delta_i$, the notation motivated by the relation $(-1)^{\delta_i}=\sign{g_i}$.

The linear map $M_\Delta\colon\R^n\to\R^m$ can be viewed as a homomorphism of abelian Lie algebras, which exponentiates to a Lie group homomorphism
\[e^{M_\Delta}\colon (\R^n)^*\to(\R^m)^*,\]
where $(\R^n)^*$ is identified with the group of invertible diagonal matrices of order $n$. Explicitly, if the $h$-th row of $M_\Delta$ is $\tran(-e_i-e_j+e_k)$, the $h$-th component of
$e^{M_\Delta}(g)$ is $\dfrac{g_k}{g_ig_j}$.

We then have:
\begin{proposition}[\cite{ContiRossi:EinsteinNice}]
\label{prop:ricci}
Let $g$ be a diagonal metric on a nice Lie algebra with diagram $\Delta$ and structure constants $c$. Define $X$ by
\[X^D= e^{M_\Delta}(g)(c^D)^2.\]
Then the Ricci operator is given by
\[\Ric = \frac12 \tran{M_\Delta} X.\]
\end{proposition}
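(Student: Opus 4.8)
The plan is to specialize the general formula for the Ricci curvature of a left-invariant pseudo-Riemannian metric to the diagonal nice setting. Writing $\epsilon_i=(-1)^{\delta_i}=\sign g_i$ and passing to the pseudo-orthonormal frame $E_i=e_i/\sqrt{\abs{g_i}}$, so that $\langle E_i,E_j\rangle=\epsilon_i\delta_{ij}$, I would first record the standard algebraic expression for $\ric$ in such a frame. The key initial simplification is that a nilpotent Lie algebra is unimodular, so $\Tr\ad_X=0$ for all $X$ and the mean-curvature vector vanishes; every term of the general formula involving it drops out, leaving
\[\ric(X,Y)=-\frac12\sum_{i,j}\epsilon_i\epsilon_j\langle[X,E_i],E_j\rangle\langle[Y,E_i],E_j\rangle+\frac14\sum_{i,j}\epsilon_i\epsilon_j\langle[E_i,E_j],X\rangle\langle[E_i,E_j],Y\rangle,\]
the factors $\epsilon_i\epsilon_j$ arising because the contractions over the frame are taken with the dual metric.

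Next I would invoke the nice condition, which enters through both of its defining halves. Since each $[e_i,e_j]$ is a multiple of a single $e_k$, in the frame $\{E_i\}$ the only nonzero brackets are $[E_i,E_j]=\tilde c_{ij}^k E_k$ with $\tilde c_{ij}^k=c_{ij}^k\sqrt{\abs{g_k}/(\abs{g_i}\abs{g_j})}$; this makes the second sum vanish off the diagonal termwise, as $[E_i,E_j]$ cannot have components along two distinct basis vectors. The first sum is governed by the dual condition that each $e_i\hook de^j$ be a multiple of a single $e^k$: this forbids $[e_p,e_i]$ and $[e_q,e_i]$ from being nonzero multiples of a common $e_j$ when $p\neq q$, so the first sum also vanishes off the diagonal. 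Thus $\Ric$ is diagonal, and on the diagonal each double sum collapses to a single sum over the arrows of $\Delta$ incident to the node in question.

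It then remains to read off the diagonal entries. Since $\Ric$ is diagonal, its $p$-th eigenvalue equals $\ric(E_p,E_p)/\langle E_p,E_p\rangle=\epsilon_p\,\ric(E_p,E_p)$. For the arrow $e_i\xrightarrow{e_j}e_k$, that is the row $h$ of $M_\Delta$ equal to $\tran{(-e_i-e_j+e_k)}$, the sign and scaling factors assemble into
\[\epsilon_i\epsilon_j\epsilon_k\,\frac{\abs{g_k}}{\abs{g_i}\abs{g_j}}\,(c_{ij}^k)^2=\frac{g_k}{g_ig_j}\,(c_{ij}^k)^2=\bigl(e^{M_\Delta}(g)\bigr)_h\,(c^D)^2_{hh}=X_h,\]
using $\sign g\cdot\abs{g}=g$ and the description of the $h$-th component of $e^{M_\Delta}(g)$ as $g_k/(g_ig_j)$ recorded before the statement.

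Finally I would match the combinatorics. In the second sum an arrow with target $p$ is counted twice, for the ordered pairs $(i,j)$ and $(j,i)$, turning the $\frac14$ into $+\frac12 X_h$; in the first sum an arrow having $p$ among its two sources is counted once, contributing $-\frac12 X_h$ for its row $h$. These are exactly the $+1$ and $-1$ entries in column $p$ of $\tran{M_\Delta}$, so the $p$-th eigenvalue of $\Ric$ equals $\frac12\sum_h (M_\Delta)_{hp}X_h=\frac12(\tran{M_\Delta}X)_p$, which is the claim. The step I expect to be the main obstacle is the bookkeeping of the constants and signs: one must verify that the $\frac14$ and $\frac12$ of the general formula, once the ordered/unordered summation is correctly weighted and the unique-target property of a nice basis has collapsed the double sums, produce the single overall factor $\frac12$, and that the three signs $\epsilon_i,\epsilon_j,\epsilon_k$ recombine into $g_k/(g_ig_j)$ rather than its absolute value; checking the off-diagonal vanishing uniformly across both families of terms is the other point requiring care.
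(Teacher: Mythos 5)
Your argument is correct, and the bookkeeping you were worried about does come out right: in the pseudo-orthonormal frame $E_i=e_i/\sqrt{\abs{g_i}}$ one has $\langle[E_i,E_j],E_k\rangle=\epsilon_k c_{ij}^k\sqrt{\abs{g_k}/(\abs{g_i}\abs{g_j})}$, the identity $\epsilon_i/\abs{g_i}=1/g_i$ turns the three signs into exactly $g_k/(g_ig_j)$, the ordered-pair double count converts the $\tfrac14$ of the bracket-squared term into $\tfrac12$ per arrow with target $p$, and the single count of arrows with $p$ among the sources gives $-\tfrac12$ per such arrow, reproducing the column of $\tran{M_\Delta}$. The off-diagonal vanishing is also correctly traced to the two halves of the nice condition. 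There is, however, nothing in the paper to compare this against: Proposition~\ref{prop:ricci} is stated with a citation to an external reference and no proof is reproduced here, so your computation is a self-contained derivation where the paper simply defers. One small point to tighten: you justify the reduced Ricci formula solely by unimodularity killing the mean-curvature terms, but the general left-invariant formula also contains a Killing-form term $-\tfrac12 B(X,Y)$ that does not involve the mean-curvature vector; it vanishes here because the algebra is nilpotent, and that should be said explicitly. With that sentence added, the proof is complete.
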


\begin{remark}
A diagonal metric on a direct sum of two nice Lie algebras is a nilsoliton if and only if it is the orthogonal sum of two nilsoliton metrics with the same constant $\lambda$. This is a simple consequence of the fact that the diagonal Nikolayevsky derivation restricts to the diagonal Nikolayevsky derivations on each component. For this reason, our classification results will be concerned only with irreducible nice Lie algebras in the sense of~\cite{ContiRossi:Construction}, meaning that it is not possible to write the nice basis as a disjoint union $\mathcal{B}'\cup\mathcal{B}''$, with each of $\mathcal{B}'$, $\mathcal{B}''$ spanning an ideal.
\end{remark}

\begin{theorem}
\label{thm:Xequalsb}
Let $\g$ be a nice nilpotent Lie algebra and let $b$ be a solution to $M_\Delta\tran{M_\Delta} b =[1]$. Given a diagonal pseudo-Riemannian metric $g$, the following are equivalent:
\begin{enumerate}
\item $g$ is a diagonal nilsoliton with $\Ric=\lambda \id+D$;
\item $\Ric = \lambda(\id-v^D)$, where $v^D$ is the diagonal Nikolayevsky derivation;
\item $X\in -2\lambda b + \ker\tran{M_\Delta}$, where $X^D= e^{M_\Delta}(g)(c^D)^2.$
\end{enumerate}
\end{theorem}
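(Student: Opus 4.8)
The plan is to translate everything into linear algebra on $\R^n$ by means of Proposition~\ref{prop:ricci}, which expresses the Ricci operator as the diagonal matrix $r^D$ with $r=\frac12\tran{M_\Delta}X$. The single most useful observation is that $r$ lies in the image of $\tran{M_\Delta}$, equivalently $r\perp\ker M_\Delta$ for the standard inner product on $\R^n$; since by \cite[Lemma~2]{Payne:TheExistence} the diagonal derivations are exactly the $w^D$ with $w\in\ker M_\Delta$, this says that the Ricci vector is automatically orthogonal to every diagonal derivation. I will use throughout the orthogonal splitting $\R^n=\ker M_\Delta\oplus\im\tran{M_\Delta}$.

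First I would record the elementary identity $M_\Delta[1]_n=-[1]_m$: each row of $M_\Delta$ is $\tran(-e_i-e_j+e_k)$, whose entries sum to $-1$. Combined with $M_\Delta\tran{M_\Delta}b=[1]_m$ this gives $M_\Delta v=M_\Delta\tran{M_\Delta}b+M_\Delta[1]_n=[1]_m-[1]_m=0$, so that $v=\tran{M_\Delta}b+[1]_n\in\ker M_\Delta$ and $N=v^D$ is indeed a diagonal derivation. Moreover $[1]_n=v+(-\tran{M_\Delta}b)$ is precisely the orthogonal decomposition of $[1]_n$ into its $\ker M_\Delta$ and $\im\tran{M_\Delta}$ components, a fact I will use twice.

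For $(1)\Leftrightarrow(2)$ I would argue as follows. In $(1)$ both $\Ric=r^D$ and $\lambda\id$ are diagonal, so $D=\Ric-\lambda\id$ is forced to be a diagonal derivation; by Payne's lemma $D=w^D$ with $w\in\ker M_\Delta$, and $(1)$ becomes $r=\lambda[1]_n+w$. Projecting this equation orthogonally onto $\ker M_\Delta$ and using $r\perp\ker M_\Delta$ together with the decomposition of the previous paragraph yields $0=\lambda v+w$, i.e. $w=-\lambda v$; equivalently $D=-\lambda N$. Substituting back gives $r=\lambda([1]_n-v)$, which is exactly $(2)$ with the same $\lambda$. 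The converse is immediate, since $-\lambda v\in\ker M_\Delta$ exhibits $\lambda(\id-v^D)$ as $\lambda\id$ plus a diagonal derivation. This projection step is where the real content sits, and it is the part I expect to require the most care in phrasing.

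Finally $(2)\Leftrightarrow(3)$ is a direct computation: rewriting $[1]_n-v=-\tran{M_\Delta}b$, condition $(2)$ reads $r=-\lambda\tran{M_\Delta}b$, and since $r=\frac12\tran{M_\Delta}X$ this is equivalent to $\tran{M_\Delta}(X+2\lambda b)=0$, that is $X\in-2\lambda b+\ker\tran{M_\Delta}$. The only remaining point is the bookkeeping of the two orthogonal subspaces $\ker M_\Delta$ and $\im\tran{M_\Delta}$ and the verification that the scalar $\lambda$ is the same in all three statements; the geometric input, namely Proposition~\ref{prop:ricci} and Payne's description of the diagonal derivations, has already done the substantive work, so no genuine obstacle remains beyond this linear-algebra bookkeeping.
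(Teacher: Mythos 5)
Your proposal is correct and follows essentially the same route as the paper: both arguments rest on Proposition~\ref{prop:ricci}, Payne's description of diagonal derivations as $w^D$ with $w\in\ker M_\Delta$, the identity $M_\Delta[1]_n=-[1]_m$, and the orthogonal splitting $\R^n=\ker M_\Delta\oplus\im\tran{M_\Delta}$ (which the paper invokes in the equivalent form $\ker M_\Delta\tran{M_\Delta}=\ker\tran{M_\Delta}$ via the inner-product computation, where you phrase it as an orthogonal projection). The only difference is organizational — you prove $(1)\Leftrightarrow(2)$ and $(2)\Leftrightarrow(3)$ and extract $D=-\lambda N$ first, whereas the paper runs the cycle $1\Rightarrow3\Rightarrow2\Rightarrow1$ — but the mathematical content is identical.
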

\begin{proof}
We first prove $1\implies 3$. By Proposition~\ref{prop:ricci}, the nilsoliton condition amounts to
\[\frac12 (\tran{M_\Delta} X)^D =\Ric= \lambda \id + D;\]
this implies that $D$ is a diagonal matrix, so it is equivalent to
\[\frac12 (\tran{M_\Delta} X) -\lambda[1] \in \ker M_\Delta,\]
i.e.
\[0=\frac12 M_\Delta\tran{M_\Delta} X +\lambda[1]=\frac12 M_\Delta\tran{M_\Delta} (X+2\lambda b).\]
This is equivalent to $Y=X+2\lambda b$ being in the kernel of $\tran{M_\Delta}$, as
\[\langle \tran{M_\Delta} Y ,\tran{M_\Delta}Y\rangle = \langle M_\Delta\tran{M_\Delta} Y ,Y\rangle = 0.\]
$3\implies 2$ follows from
Proposition~\ref{prop:ricci} and Proposition~\ref{prop:payne}, as
\[\Ric = \frac12(\tran{M_\Delta}(-2\lambda b))^D=-\lambda(v-[1])^D=\lambda (\id - v^D).\]
$2\implies1$ is obvious.
\end{proof}

Given a multi-index $\alpha=(a_1,\dotsc, a_m)\in\Z^m$ and a vector $X=(x_1,\dotsc, x_m)\in\R^m$, define
\[\abs{X}^\alpha=(\abs{x_1}^{a_1},\dotsc, \abs{x_m}^{a_m}), \quad
X^\alpha=(x_1^{a_1},\dotsc, x_m^{a_m}).\]

\begin{corollary}
\label{cor:KHLP}
Let $\g$ be a nice nilpotent Lie algebra and let $b$ be a solution to $M_\Delta\tran M_\Delta b=[1]$. There exists a diagonal nilsoliton metric of signature $\delta$ if and only if there is a vector $X\in\R^m$ such that
\begin{description}
\item[($\mathbf{K}$)\namedlabel{enum:condK}{($\mathbf{K}$)}] $X\in-2\lambda b + \ker\tran M_\Delta$, where $\lambda$ is the constant appearing in the equation $\Ric=\lambda \id+D$;
\item[($\mathbf{H}$)\namedlabel{enum:condH}{($\mathbf{H}$)}] $X$ does not belong to any coordinate hyperplane;
\item[($\mathbf{L}$)\namedlabel{enum:condL}{($\mathbf{L}$)}] $\logsign X=M_{\Delta,2}\delta$;
\item[($\mathbf{P}$)\namedlabel{enum:condP}{($\mathbf{P}$)}] for some (hence every) choice of $\alpha_1,\dotsc, \alpha_k\in \Z^m$ forming a basis of $\ker \tran{M_\Delta}$, we have
\[X^{\alpha_i}=c^{2\alpha_i}, \quad i=1,\dotsc, k,\] where $c$ is the vector of structure constants of $\g$.
\end{description}
\end{corollary}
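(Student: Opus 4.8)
The plan is to regard this corollary as the combination of Theorem~\ref{thm:Xequalsb} with a purely ``reconstruction'' statement. For a fixed diagonal metric $g$, Theorem~\ref{thm:Xequalsb} already identifies the associated vector $X$, defined by $X^D=e^{M_\Delta}(g)(c^D)^2$, as satisfying~\ref{enum:condK} (for the $\lambda$ of $\Ric=\lambda\id+D$) exactly when $g$ is a diagonal nilsoliton. So the remaining task is to characterize which $X\in\R^m$ arise as $X^D=e^{M_\Delta}(g)(c^D)^2$ for some diagonal metric $g$ of signature \emph{exactly} $\delta$, and to show these are precisely the $X$ satisfying~\ref{enum:condH},~\ref{enum:condL},~\ref{enum:condP}. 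Conjoining the two then gives the stated equivalence.

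First I would set up the factorization of the homomorphism $e^{M_\Delta}\colon(\R^n)^*\to(\R^m)^*$ under the isomorphism $\R^\times\cong\R\times\Z_2$, $x\mapsto(\log\abs x,\logsign x)$. Writing $g_i=(-1)^{\delta_i}e^{t_i}$ and reading off a single row $\tran(-e_i-e_j+e_k)$ of $M_\Delta$ gives the two identities $\log\abs{e^{M_\Delta}(g)}=M_\Delta t$ and $\logsign e^{M_\Delta}(g)=M_{\Delta,2}\delta$, so magnitudes are governed by $M_\Delta$ and signs by $M_{\Delta,2}$. Because each structure constant $c_h=c^k_{ij}$ is nonzero, the vector $Y$ with $Y_h=X_h/c_h^2$ coincides with $e^{M_\Delta}(g)$ and has the same signs as $X$; thus $\logsign X=\logsign Y$ and $X^\alpha=Y^\alpha c^{2\alpha}$ for every multi-index $\alpha$, so~\ref{enum:condP} can be restated cleanly as $Y^{\alpha_i}=1$ for all $i$.

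The step I expect to be the real obstacle is reconciling this signed equation with the fact that the existence of a metric constrains only absolute values: solving $M_\Delta t=\log\abs Y$ requires $\log\abs Y\in\operatorname{im}M_\Delta=(\ker\tran{M_\Delta})^\perp$, i.e. $\abs{Y^{\alpha_i}}=1$, which is weaker than~\ref{enum:condP}. I would close the gap with the sign computation $\sign Y^{\alpha_i}=(-1)^{(\tran{M_{\Delta,2}}\alpha_i)\cdot\delta}$ (exponent mod $2$): since $\alpha_i\in\ker\tran{M_\Delta}$ has integer entries, its mod $2$ reduction gives $\tran{M_{\Delta,2}}\alpha_i=0$, so $\sign Y^{\alpha_i}=+1$ automatically once~\ref{enum:condL} holds. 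Hence, under~\ref{enum:condH} and~\ref{enum:condL}, condition~\ref{enum:condP} ($Y^{\alpha_i}=1$) is equivalent to $\log\abs Y\in\operatorname{im}M_\Delta$. This is precisely the place where the interaction between $M_\Delta$ and its reduction $M_{\Delta,2}$ is indispensable.

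With this equivalence in hand both directions are short. Forward: from a signature-$\delta$ metric $g$, the vector $X$ has nonzero entries (\ref{enum:condH}), the sign identity yields $\logsign X=M_{\Delta,2}\delta$ (\ref{enum:condL}), and $\log\abs Y=M_\Delta t\in\operatorname{im}M_\Delta$ gives~\ref{enum:condP}. Backward: given~\ref{enum:condH},~\ref{enum:condL},~\ref{enum:condP}, set $Y_h=X_h/c_h^2$; the equivalence above provides $\log\abs Y\in\operatorname{im}M_\Delta$, so I can solve $M_\Delta t=\log\abs Y$ and put $g_i=(-1)^{\delta_i}e^{t_i}$, a diagonal metric of signature exactly $\delta$ with $e^{M_\Delta}(g)=Y$ by the two identities together with~\ref{enum:condL}; thus $X^D=e^{M_\Delta}(g)(c^D)^2$, and Theorem~\ref{thm:Xequalsb} with~\ref{enum:condK} makes $g$ a nilsoliton of constant $\lambda$. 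The ``some hence every'' clause in~\ref{enum:condP} then follows because $Y^{\alpha_i}=1$ on one integer basis already forces $\log\abs Y\perp\ker\tran{M_\Delta}$, after which the same sign computation gives $Y^\beta=1$ for every integer $\beta\in\ker\tran{M_\Delta}$.
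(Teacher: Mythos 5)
Your proposal is correct and follows essentially the same route as the paper: reduce to Theorem~\ref{thm:Xequalsb}, decompose $e^{M_\Delta}$ into magnitude and sign via $x\mapsto(\log\abs{x},\logsign x)$, identify the image condition $\log\abs{X}-2\log\abs{c}\in\im M_\Delta$ with \ref{enum:condP} up to sign, and close the sign gap by observing that $\tran{M_{\Delta,2}}\alpha_{i,2}=0$ forces $\logsign X^{\alpha_i}=0$ once \ref{enum:condL} holds. Your write-up via $Y=X/c^2$ is only a cosmetic repackaging of the paper's argument.
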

\begin{proof}
By Theorem~\ref{thm:Xequalsb}, we must show that $X=(x_1,\dotsc, x_m)$ satisfies \ref{enum:condH}, \ref{enum:condK}, \ref{enum:condL}, \ref{enum:condP} if and only if
\begin{equation}
 \label{eqn:cond3riscritta}
X\in-2\lambda b+\ker\tran M_\Delta, \qquad X^D=e^{M_\Delta}(g)(c^D)^2,
\end{equation}
where $g$ has signature $\delta$. We argue as in \cite[Theorem~2.2]{ContiRossi:RicciFlat}. Assume~\eqref{eqn:cond3riscritta} holds with $g=(-1)^\delta \exp v$. Then
\[\logsign X=M_{\Delta,2}\delta, \qquad \abs{X}^D=\exp (M_\Delta(v)^D)(c^D)^2,\]
and \ref{enum:condK}, \ref{enum:condH}, \ref{enum:condL} hold trivially. In addition, the vector
\[\log \abs{X}-2\log\abs{c}=\begin{pmatrix}\log \abs{x_1}-2\log \abs{c_1}\\
                             \vdots\\
                             \log \abs{x_m}-2\log \abs{c_m}\\
                            \end{pmatrix}\]
lies in the image of $M_\Delta$; this is equivalent  to $\alpha_i(\log\abs{X}-\log c^2)=0$ for $i=1,\dotsc, k$, i.e.
\[\abs{X}^{\alpha_i}=c^{2\alpha_i}.\]
Now write $\alpha_{i,2}$ for the $\operatorname{mod} 2$ reduction of $\alpha_i$, and observe that
\[\logsign X^{\alpha_i}=\langle \logsign X,\alpha_{i,2}\rangle = \langle M_{\Delta,2}v, \alpha_{i,2}\rangle =\langle v, \tran M_{\Delta,2}\alpha_{i,2}\rangle=0,\]
which implies~\ref{enum:condP}.

The converse is proved in a similar way.
\end{proof}

\begin{remark}
Corollary~\ref{cor:KHLP} is a generalization of \cite[Theorem~2.2]{ContiRossi:RicciFlat}, where either $\lambda=0$ or $N=0$ was assumed. Even in those cases, the result of Corollary~\ref{cor:KHLP} is stronger because $X$ appears instead of $\abs{X}$ in~\ref{enum:condP}.

In this paper, we are interested in diagonal nilsoliton metrics of type Nil4 on nice Lie algebras, which only exist if the Nikolaevsky derivation $N$ is nonzero (see Theorem~\ref{thm:Xequalsb}).
\end{remark}

\begin{remark}
Eliminating denominators, \ref{enum:condP} determines a system of homogeneous polynomial equations in the entries of $X$. Indeed, every $\alpha=(a_1,\dotsc, a_m)$ in $\ker \tran M_\Delta$ is orthogonal to $\im M_\Delta$, and in particular to $M_\Delta[1]=-[1]$; therefore, we have $a_1+\dotsc + a_m=0$.
\end{remark}

\begin{remark}
 \label{rem:wick}
Given a nice Lie algebra and a diagonal nilsoliton metric $g$, Corollary~\ref{cor:KHLP} implies that $(-1)^\delta g$ is also a nilsoliton for any $\delta\in\ker M_{\Delta,2}$. This can also be viewed as a consequence of Proposition~\ref{prop:ricci}: $g$ and $(-1)^\delta g$ induce the same vector $X$, so they have the same Ricci operator.
\end{remark}

\begin{example}
\label{example:lorentziano}
Consider the nice Lie algebra
\[\texttt{7421:9} \qquad (0,0,0,-e^{12},e^{13},e^{14}+e^{23},e^{16}+e^{34}).\]
This notation, which will be used throughout the paper, means that relative to a nice basis $\{e_1,\dotsc, e_7\}$ and its dual $\{e^1,\dotsc, e^7\}$, the Chevalley-Eilenberg differential satisfies
\[de^1=0, de^2=0, de^3=0, de^4=-e^1\wedge e^2, de^5=e^1\wedge e^3, de^6=e^1\wedge e^4+e^2\wedge e^3, de^7=e^1\wedge e^6+e^3\wedge e^4;\]
the label \texttt{7421:9} refers to the classification of~\cite{ContiRossi:Construction}.

Accordingly, the root matrix is given by
\[\begin{pmatrix}
-1 & -1 & 0 & 1 & 0 & 0 & 0\\
-1 & 0 & -1 & 0 & 1 & 0 & 0\\
-1 & 0 & 0 & -1 & 0 & 1 & 0\\
0 & -1 & -1 & 0 & 0 & 1 & 0\\
-1 & 0 & 0 & 0 & 0 & -1 & 1\\
0 & 0 & -1 & -1 & 0 & 0 & 1
\end{pmatrix};\]
the Nikolayevsky derivation is
\[N=\frac2{19}(3,5,6,8,9,11,14)^D.\]
Fixing $\lambda=-\frac12$, the vectors $X$ satisfying \ref{enum:condK} are given by
\begin{equation}
\label{eqn:X7421:9}
X=\bigl( \frac{1}{19}+x_6,-\frac{1}{19},\frac{4}{19},\frac{8}{19}-x_6,\frac{9}{19}-x_6,x_6\bigr),
\end{equation}
where $x_6$ is a parameter. Notice that \ref{enum:condH} holds for $x_6$ different from $-\frac1{19},\frac8{19},\frac9{19}$. Since $X$ is uniquely determined up to $\ker \tran M_\Delta$, it is evident from~\eqref{eqn:X7421:9} that $\ker \tran M_\Delta$ is spanned by
\[\alpha=(1,0,0,0,-1,-1,1).\]
Therefore, $X$ satisfies \ref{enum:condP} when
\[\frac{(\frac{1}{19}+x_6)x_6}{(\frac{8}{19}-x_6)(\frac{9}{19}-x_6)}=1,\]
i.e. for $x_6=\frac4{19}$, giving $X=\frac1{19}(5,-1,4,4,5,4)$ and $\logsign X=(0,1,0,0,0,0)$. We see that $M_{\Delta,2}\delta=\logsign X$ has the four solutions
\[(0,0,0,0,1,0,0),\; (1,1,0,0,0,1,0),\; (1,0,0,1,0,0,1),\; (0,1,0,1,1,1,1),\]
corresponding to four different signatures $\delta\in(\Z_2)^7$ of diagonal nilsoliton metrics. In particular, this Lie algebra admits a Lorentzian nilsoliton metric with $g_5<0$, but not a Riemannian nilsoliton metric, consistently with~\cite{FernandezCulma}.

In order to compute the metrics explicitly, we can argue as in the proof of Corollary~\ref{cor:KHLP} and write $X=e^{M_\Delta}(g)$ with $g=(-1)^\delta \exp v$. The linear system $\log \abs{X}=M_\Delta(v)$ has the solution
\begin{equation}
\label{eqn:v7421:9}
v=\left(0,0,\log \frac5{19}, \log \frac5{19},\log \frac5{361},\log \frac{20}{361},\log \frac{100}{6859}\right).
\end{equation}
Therefore, a Lorentzian nilsoliton metric with $\Ric=-\frac12(\id-N)$ is given by
\[e^1\otimes e^1 + e^2\otimes e^2 + \frac5{19}(e^3\otimes e^3+e^4\otimes e^4)-\frac5{361}e^5\otimes e^5+\frac{20}{361}e^6\otimes e^6+\frac{100}{6859}e^7\otimes e^7,\]
and the other signatures are obtained by changing the signs appropriately.

Note that the solution~\eqref{eqn:v7421:9} is not unique, but adding an element of $\ker\tran M_\Delta$ to $v$ yields the same metric up to an isomorphism of nice Lie algebras (see \cite[Remark~2.3]{ContiRossi:RicciFlat}).
\end{example}

\begin{example}
An example where there is a vector $X$ satisfying \ref{enum:condK} and \ref{enum:condH}, but not satisfying \ref{enum:condL}, is the following. Take
\[\texttt{85421:4a}
\quad (0,0,0,e^{12},e^{14},e^{13}+e^{24},e^{15},e^{17}+e^{23}).\]
Then the only solution to \ref{enum:condK} is
\[X=\bigl(\frac{3}{22},\frac{5}{22},-\frac{1}{11},\frac{5}{22},\frac{7}{22},\frac{2}{11},\frac{2}{11}\bigr);\]
this vector is not in any coordinate hyperplane, but there is no $\delta$ such that $\logsign M_{\Delta,2}(\delta)=\logsign X$. The same holds for \texttt{85421:4b}, which only differs by a sign in front of $e^{14}$.

Examples where \ref{enum:condH} or \ref{enum:condP} fail appear in Theorem~\ref{thm:nice7}.
\end{example}

The metrics $g$ and $(-1)^\delta g$ of Remark~\ref{rem:wick} can be related geometrically by the following construction. Fix a Lie algebra $\g$ with a nice basis $\mathcal{B}=\{e_1,\dotsc, e_n\}$, and let $D=(d_1,\dotsc, d_n)\in\Z^n$. Inside the complexification of $\g$, consider the subspace
\begin{equation}
\label{eqn:wickbasis}
\g_D=\operatorname{Span}_{\R}\mathcal{B}_D, \quad \mathcal{B}_D=\{i^{d_1}e_1,\dotsc, i^{d_n}e_n\}.
\end{equation}
If $\g_D$ is a real subalgebra of $\g^\C$, then $\g$ and $\g_D$ share the same complexification.
\begin{proposition}
\label{prop:gD}
Let $\g$ be a Lie algebra with a nice basis $\{e_1,\dotsc, e_n\}$; choose $D\in \Z^n$, and let $\delta$ be its $\operatorname{mod}2$ reduction. Then:
\begin{itemize}
\item if $M_{\Delta,2}(\delta)=0$, then $\g_D$ is a subalgebra of $\g^\C$ with a nice basis given by~\eqref{eqn:wickbasis}; in particular, $\g$ and $\g_D$ have the same root matrix;
\item if $\delta$ is the $\operatorname{mod} 2$ reduction of some $D'$ with $M_\Delta(D')=0$, then $\g_D$ is isomorphic to $\g$ as a nice Lie algebra;
\item conversely, if $\g_D$ is a subalgebra of $\g^\C$ then $M_{\Delta,2}(\delta)=0$.
\end{itemize}
\end{proposition}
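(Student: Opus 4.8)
The plan is to reduce all three items to a single parity computation. Writing $f_\ell=i^{d_\ell}e_\ell$ for the elements of $\mathcal{B}_D$, I would first compute, for each nonzero bracket $[e_i,e_j]=c_{ij}^k e_k$,
\[[f_i,f_j]=i^{d_i+d_j}[e_i,e_j]=i^{d_i+d_j-d_k}c_{ij}^k f_k.\]
The observation driving everything is that the exponent $d_i+d_j-d_k$ has the same parity as the entry of $M_{\Delta,2}(\delta)$ in the row associated to $(\{i,j\},k)$, because that row of $M_\Delta$ applied to $D$ yields $d_k-d_i-d_j$. Hence $i^{d_i+d_j-d_k}\in\R$ exactly when that entry vanishes. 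This immediately gives the bracket half of both the first and third items and shows they are two directions of one equivalence.

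For the first item, assume $M_{\Delta,2}(\delta)=0$. Then every exponent above is even, so each $[f_i,f_j]$ is a real multiple of $f_k$ while zero brackets stay zero; thus $\g_D$ is closed under the bracket and is a real subalgebra. To promote $\mathcal{B}_D$ to a nice basis I must also verify the interior-product condition: with dual basis $f^\ell=i^{-d_\ell}e^\ell$ one has $f_i\hook df^j=i^{d_i-d_j}(e_i\hook de^j)$, and whenever $e_i\hook de^j$ is a nonzero multiple of $e^k$ the triple $(\{i,k\},j)$ is a bracket of $\g$, so the \emph{same} row of $M_{\Delta,2}$ forces $i^{d_i-d_j+d_k}\in\R$ and $f_i\hook df^j$ to be a real multiple of $f^k$. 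Since the nonzero brackets of $\g_D$ occur for exactly the same triples $(\{i,j\},k)$ as in $\g$, the nice diagram, and hence the root matrix, coincide. For the converse (third item), suppose $\g_D$ is a subalgebra. The $f_\ell$ are nonzero complex multiples of the $\C$-basis $\{e_\ell\}$, hence $\C$-linearly independent, so any real combination of the $f_\ell$ that happens to be a complex multiple of a single $f_k$ is in fact a real multiple of $f_k$. Applying this to $[f_i,f_j]=i^{d_i+d_j-d_k}c_{ij}^k f_k\in\g_D$ for every nonzero bracket forces $i^{d_i+d_j-d_k}\in\R$, i.e. every entry of $M_{\Delta,2}(\delta)$ vanishes.

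For the second item, I would exploit that $M_\Delta(D')=0$ means, by \cite[Lemma~2]{Payne:TheExistence}, that $D'^D=\diag(d'_1,\dotsc,d'_n)$ is a diagonal derivation of $\g$. Extending it $\C$-linearly to $\g^\C$ and exponentiating, $\psi=\exp\bigl(\tfrac{i\pi}{2}D'^D\bigr)=\diag(i^{d'_1},\dotsc,i^{d'_n})$ is an automorphism of $\g^\C$, and it restricts to a real Lie-algebra isomorphism $\g\to\g_{D'}$ because $\psi(\sum_\ell a_\ell e_\ell)=\sum_\ell a_\ell\, i^{d'_\ell}e_\ell$. Finally, since $\delta$ is the $\operatorname{mod}2$ reduction of $D'$, each $d_\ell-d'_\ell$ is even and $i^{d_\ell}e_\ell=\pm\, i^{d'_\ell}e_\ell$; hence $\g_{D'}=\g_D$ as real subspaces and the two nice bases agree up to signs, so $\psi$ is the desired isomorphism of nice Lie algebras. (Here $M_\Delta(D')=0$ reduces mod $2$ to $M_{\Delta,2}(\delta)=0$, so the first item already guarantees that $\g_D$ is nice.)

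The main obstacle is bookkeeping rather than depth: I must make sure the single condition $M_{\Delta,2}(\delta)=0$ genuinely controls both the bracket and the interior-product halves of the niceness definition, which rests on the fact that each row of $M_\Delta$ simultaneously records a bracket $[e_i,e_j]\ni e_k$ and the associated interior products. The one genuinely non-formal step is the converse, where the $\C$-linear independence of the $f_\ell$ is exactly what upgrades ``$[f_i,f_j]$ lands in $\g_D$'' to ``$[f_i,f_j]$ is a real multiple of $f_k$''.
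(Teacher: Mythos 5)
Your proof is correct and follows essentially the same route as the paper: the single parity computation $[f_i,f_j]=i^{d_i+d_j-d_k}c_{ij}^k f_k$ handles the first and third items, and the second item is reduced to the case $M_\Delta(D')=0$ (where the structure constants are literally unchanged) plus the observation that $\mathcal{B}_D$ and $\mathcal{B}_{D'}$ differ only by signs. Your extra checks --- the interior-product half of niceness and the $\C$-linear independence upgrading ``lands in $\g_D$'' to ``real multiple of $f_k$'' --- are details the paper leaves implicit, and your exponentiation of the diagonal derivation is just a repackaging of the paper's ``same structure constants'' step.
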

\begin{proof}
Write $E_1=i^{d_1}e_1,\dotsc, E_n=i^{d_n}e_n$. Suppose $[e_h,e_k]$ is a nonzero multiple of $e_l$, say $[e_h,e_k]=c_{hk}^l e_l$. Then $[E_h,E_k]=c_{hk}^li^{d_h+d_k-d_l} E_l$; this is a real multiple of $E_l$ if and only if $d_h+d_k-d_l$ is even. Imposing this condition for all nonzero brackets is equivalent to $M_{\Delta,2}(\delta)=0$.

In the case that $M_{\Delta}(D)=0$, we have $d_h+d_k-d_l=0$, so $\g$ and $\g_D$ have the same structure constants.

On the other hand if $D$ and $D'$ have the same $\operatorname{mod} 2$ reduction $\delta$, $\g_D$ and $\g_{D'}$ coincide as a subset of $\g^\C$ and the nice bases $\mathcal{B}_D$ and $\mathcal{B}_{D'}$ only differ by changing signs; therefore, the two nice Lie algebras are isomorphic.
\end{proof}

\begin{remark}
Not every $\delta\in\ker M_{\Delta,2}$ can be lifted to an element of $\ker M_\Delta$ in general. This happens precisely when the nice diagram $\Delta$ gives rise to more than one (family of) nice Lie algebras, with different signs of the structure constants (see~\cite{ContiRossi:Construction}).
\end{remark}

Now let $\g$ be a nice Lie algebra with a diagonal metric $g$, relative to the nice basis $\mathcal{B}$, and take $D\in \Z^n$. On the nice Lie algebra $(\g_D,\mathcal{B}_D)$ we have an induced diagonal metric $g_D$, obtained by $\C$-bilinear extension to $\g^\C$ and then restriction. In the language of~\cite{Helleland:WickRotations}, $(\g_D,g_D)$ is a \emph{Wick rotation} of $(\g,g)$. On the other hand, one can also take the metric on $\g_D$ defined by pulling back the metric on $\g$ under the linear isomorphism mapping elements of $\mathcal B_D$ to $\mathcal B$ in the natural order; we will call this the \emph{transferred metric} on $\g_D$.

It follows easily from Corollary~\ref{cor:KHLP} that Wick rotation and transfer preserve the nilsoliton condition. This gives a more geometric interpretation of the observation of Remark~\ref{rem:wick}:
\begin{proposition}
\label{prop:wick}
Let $g$ be a diagonal metric on a nice Lie algebra $\g$. Then taking a Wick rotation $(\g_D,g_D)$ and transferring $g_D$ to $\g$ results in a diagonal metric $g'=(-1)^\delta g$, $\delta\in\ker M_{\Delta,2}$; in particular, $g$ is a nilsoliton if and only if $g'$ is a nilsoliton.

Conversely, any two metrics $g$, $(-1)^\delta g$ with $\delta\in\ker M_{\Delta,2}$ are related in this way.
\end{proposition}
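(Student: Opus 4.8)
The plan is to reduce everything to an explicit evaluation of the Wick\dash rotated metric in the basis $\mathcal B_D$, and then to read off the effect of transferring back to $\mathcal B$. First I would extend $g$ $\C$\dash bilinearly to $\g^\C$ and evaluate it on $\mathcal B_D=\{i^{d_1}e_1,\dotsc,i^{d_n}e_n\}$. Since $g$ is diagonal, for $E_j=i^{d_j}e_j$ one gets $g_D(E_j,E_k)=i^{d_j+d_k}g(e_j,e_k)$, which vanishes for $j\neq k$ and equals $i^{2d_j}g_j=(-1)^{d_j}g_j$ for $j=k$. In particular $g_D$ is diagonal and real on $\g_D$, hence a genuine diagonal pseudo-Riemannian metric on the nice Lie algebra $(\g_D,\mathcal B_D)$, so the Wick rotation is well defined.

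Next I would transfer $g_D$ to $\g$ along the $\R$\dash linear isomorphism identifying $\mathcal B_D$ with $\mathcal B$ in the natural order, i.e.\ sending $E_j$ to $e_j$. As this map carries the $g_D$\dash orthogonal basis $\mathcal B_D$ to the basis $\mathcal B$, the transferred metric $g'$ is again diagonal, with $g'(e_j,e_j)=g_D(E_j,E_j)=(-1)^{d_j}g_j=(-1)^{\delta_j}g_j$, where $\delta$ is the $\operatorname{mod}2$ reduction of $D$. Hence $g'=(-1)^\delta g$. For the Wick rotation to exist, $\g_D$ must be a real subalgebra of $\g^\C$, and by the third bullet of Proposition~\ref{prop:gD} this forces $\delta\in\ker M_{\Delta,2}$, giving the asserted membership.

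The nilsoliton equivalence is then immediate from Remark~\ref{rem:wick}: when $\delta\in\ker M_{\Delta,2}$, the metrics $g$ and $(-1)^\delta g$ induce the same vector $X$ of Proposition~\ref{prop:ricci}, because the sign introduced in each component $g_k/(g_ig_j)$ of $e^{M_\Delta}(g)$ is $(-1)^{(M_{\Delta,2}\delta)_h}=1$; thus the two metrics share the same Ricci operator and one is a nilsoliton exactly when the other is. For the converse, given $g$ and $(-1)^\delta g$ with $\delta\in\ker M_{\Delta,2}$, I would take $D\in\Z^n$ to be the lift of $\delta$ with entries in $\{0,1\}$: by the first bullet of Proposition~\ref{prop:gD}, $\g_D$ is then a subalgebra with nice basis $\mathcal B_D$, and the computation above shows that Wick rotation followed by transfer returns precisely $(-1)^\delta g$.

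I expect no serious obstacle: the content is a one\dash line bilinearity computation combined with the already established Proposition~\ref{prop:gD} and Remark~\ref{rem:wick}. The only points requiring care are bookkeeping with the two maps involved---the $\C$\dash bilinear extension producing the factors $i^{2d_j}=(-1)^{d_j}$, and the direction of the transfer isomorphism (pushing $g_D$ forward from $\g_D$ to $\g$ rather than pulling $g$ back)---together with verifying that the form obtained on $\g_D$ is real, so that the Wick rotation genuinely yields a pseudo-Riemannian metric.
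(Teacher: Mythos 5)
Your proposal is correct and follows essentially the same route as the paper: the same $\C$\dash bilinear evaluation $g_D(i^{d_k}e_k,i^{d_k}e_k)=(-1)^{d_k}g_k$, the same appeal to Proposition~\ref{prop:gD} for membership of $\delta$ in $\ker M_{\Delta,2}$ and for the converse via a lift of $\delta$, and the nilsoliton equivalence via Remark~\ref{rem:wick}. The extra care you take (verifying that $g_D$ is real and diagonal on $\g_D$) is sound bookkeeping that the paper leaves implicit.
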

\begin{proof}
If $D=(d_1,\dotsc, d_n)$, evaluating $g_D$ on the basis $\mathcal{B}_D$ gives
\[g_D(i^{d_k}e_k,i^{d_k}e_k)=(-1)^{d_k}g(e_k,e_k)=(-1)^{\delta_k}g(e_k,e_k),\]
where $\delta=(\delta_1,\dotsc, \delta_k)$ is the $\operatorname{mod}2$ reduction of $D$. In other words, if $g=(g_1,\dotsc, g_n)$, then $g_D=((-1)^{\delta_1}g_1,\dotsc, (-1)^{\delta_n}g_n)$. Transferring $g_D$ to $\g$ therefore gives $g'=(-1)^\delta g$. By Proposition~\ref{prop:gD}, $\delta$ is in $\ker M_{\Delta,2}$.

Conversely, suppose $g'=(-1)^\delta g$, with $M_{\Delta,2}(\delta)=0$, and let $D$ be a vector in $\Z^n$ whose $\operatorname{mod}2$ reduction is $\delta$. By the above calculation, transferring $g'$ to $\g_D$ gives the Wick-rotated metric $g_D$.
\end{proof}

Note that Example~\ref{example:lorentziano} shows that not all the pseudo-Riemannian nilsolitons are obtained from a Riemannian nilsoliton by transferring the metric.

\begin{example}
The $6$-dimensional Lie algebras
\begin{gather*}
\texttt{631:5a}\qquad (0, 0, 0, e^{12}, e^{13}, e^{24}+e^{35})\\
\texttt{631:5b}\qquad (0, 0, 0, -e^{12}, e^{13}, e^{24}+e^{35})
\end{gather*}
are not isomorphic over $\R$, but they share the same nice diagram $\Delta$ and the same complexification. We can proceed as in Example~\ref{example:lorentziano}, obtaining the following:
\begin{equation}
 \label{eqn:X631:5}
X=\left(\frac{1}{4},\frac{1}{4},\frac{1}{4},\frac{1}{4}\right),\quad
\delta\in\operatorname{Span}_{\Z_2}\{(1,0,0,1,1,1),(0,1,0,1,0,0),(0,0,1,0,1,0)\}.
 \end{equation}
The generic vector $v$ such that $\log\abs{X}=M_{\Delta}(v)$ is given by:
\[v=\left(v_1,v_2,v_2,v_1+v_2-\log4,v_1+v_2-\log 4,v_1+2 v_2-2 \log 4\right).\]
In the end, on both \texttt{631:5a} and \texttt{631:5b}, for any signature $\delta$ as in~\eqref{eqn:X631:5} we have a $2$-parameter family of nilsoliton metrics given by $g=(-1)^\delta \exp v$. For example, on $\g=\texttt{631:5a}$, for $\delta=(0,1,0,1,0,0)$ we have the nilsoliton metric
\[g=e^1\otimes e^1 - e^2\otimes e^2 +  e^3\otimes e^3 \\
- \frac{1}{4}e^4\otimes e^4 + \frac{1}{4} e^5\otimes e^5 +  \frac{1}{16} e^6\otimes e^6.\]
If we set $D=(1,1,0,0,1,1)\in\Z^6$, then $\g^D$ is isomorphic to \texttt{631:5b}, and the Wick-rotated metric $g^D$ is
\[-e^1\otimes e^1 + e^2\otimes e^2 +  e^3\otimes e^3 \\
- \frac{1}{4}e^4\otimes e^4 - \frac{1}{4} e^5\otimes e^5 -  \frac{1}{16} e^6\otimes e^6.\]
On the other hand, if we set $D=(0,1,0,1,0,0)$, then $\g^D$ is isomorphic to $\g$ itself and the Wick-rotated metric $g^D$ is Riemannian.
\end{example}

\begin{remark}
It is known that given two nilpotent Lie algebras with the same complexification, if one is a Riemannian nilsoliton then so is the other (\cite{Nikolayevsky,Jablonski:Orbits}. In our context, this is reflected in the fact that a diagonal nilsoliton metric on a nice Lie algebra $\g$ can be transferred to a nilsoliton metric on $\g_D$ with the same signature.
\end{remark}

For positive signatures $\delta$, a result of~\cite{Nikolayevsky} implies that condition~\ref{enum:condP} is redundant; in fact, this also holds slightly more generally:
\begin{corollary}
\label{cor:positivesignature}
Let $\g$ be a nice nilpotent Lie algebra and let $v^D$ be the diagonal Nikolayevsky derivation. If $\delta$ is a signature with $M_{\Delta,2}\delta=0$, then $\g$ has a diagonal nilsoliton metric of signature $\delta$ if and only if there is a vector $X\in\R^m$ with all entries positive satisfying  \ref{enum:condK}.
\end{corollary}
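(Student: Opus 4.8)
The plan is to read the claim off Corollary~\ref{cor:KHLP} and exploit the hypothesis $M_{\Delta,2}\delta=0$ to collapse its conditions. By Corollary~\ref{cor:KHLP}, $\g$ admits a diagonal nilsoliton of signature $\delta$ if and only if some $X\in\R^m$ satisfies \ref{enum:condK}, \ref{enum:condH}, \ref{enum:condL}, \ref{enum:condP}. When $M_{\Delta,2}\delta=0$, condition \ref{enum:condL} becomes $\logsign X=0$, i.e.\ $X$ has all entries positive, and this already implies \ref{enum:condH}. Thus a signature-$\delta$ nilsoliton exists if and only if there is $X>0$ satisfying \ref{enum:condK} and \ref{enum:condP}. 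The crucial observation is that conditions \ref{enum:condK}, \ref{enum:condP} and the positivity of $X$ do not mention $\delta$ at all; so the statement to be proved is really that, among positive vectors in the affine space \ref{enum:condK}, the polynomial condition \ref{enum:condP} can always be met once that space meets the positive orthant.

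The forward implication is then immediate: from a signature-$\delta$ nilsoliton, Corollary~\ref{cor:KHLP} produces $X$ satisfying \ref{enum:condK}--\ref{enum:condP}, which is positive by the above, so a fortiori $X>0$ satisfies \ref{enum:condK}. For the converse I would reduce to the Riemannian signature $\delta=0$: since \ref{enum:condK}, \ref{enum:condP} and positivity are $\delta$-independent, it suffices to show that the existence of a positive $X$ satisfying \ref{enum:condK} forces the existence of a positive $X'$ satisfying \ref{enum:condK} and \ref{enum:condP}, and by Corollary~\ref{cor:KHLP} (with $\delta=0$) this is exactly the assertion that $\g$ carries a \emph{Riemannian} nilsoliton. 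Geometrically the same reduction is furnished by Proposition~\ref{prop:wick}/Remark~\ref{rem:wick}: one first builds the Riemannian nilsoliton and then transfers it to signature $\delta$, which is legitimate precisely because $\delta\in\ker M_{\Delta,2}$.

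The heart of the matter --- and the step I expect to be the main obstacle --- is the redundancy of \ref{enum:condP} in the Riemannian case, which is not a formal consequence of the linear algebra above but the manifestation, in these coordinates, of the convexity of the scalar curvature functional proved in~\cite{Nikolayevsky}. To locate the difficulty, restrict to \ref{enum:condK} the strictly convex functional $\Psi(X)=\sum_h\bigl(x_h\log(x_h/c_h^2)-x_h\bigr)$: its Hessian in the coordinates $s$ given by $X=-2\lambda b+\sum_i s_i\alpha_i$, where $\alpha_1,\dots,\alpha_k$ is a basis of $\ker\tran M_\Delta$, is the weighted Gram matrix $\sum_h x_h^{-1}(\alpha_i)_h(\alpha_j)_h$, positive definite because $x_h>0$ and the $\alpha_i$ are linearly independent; and the vanishing of its gradient along \ref{enum:condK} is exactly $X^{\alpha_i}=c^{2\alpha_i}$, i.e.\ \ref{enum:condP}. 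So \ref{enum:condP} amounts to finding an interior minimizer of $\Psi$ on the nonempty convex set \ref{enum:condK}$\cap\{X>0\}$; since $x\log x\to0$ as $x\to0^+$, the functional does not blow up along the faces $x_h=0$, and the genuinely delicate point is to rule out that the infimum escapes to such a face rather than being attained in the interior. This interiority is what the convexity argument of~\cite{Nikolayevsky} supplies and what breaks down for indefinite signatures, which is the reason the hypothesis $M_{\Delta,2}\delta=0$ cannot simply be dropped.
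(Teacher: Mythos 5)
Your proposal is correct and follows essentially the same route as the paper: the easy direction is read off from Corollary~\ref{cor:KHLP} (condition \ref{enum:condL} with $M_{\Delta,2}\delta=0$ forces $X>0$), the converse reduces to the Riemannian case and invokes \cite[Lemma~2]{Nikolayevsky} for the existence of a positive solution of \ref{enum:condK} and \ref{enum:condP}, and the transfer to an arbitrary $\delta\in\ker M_{\Delta,2}$ is done via Remark~\ref{rem:wick}/Proposition~\ref{prop:wick} --- which is exactly the paper's two-line proof. Your variational reformulation of \ref{enum:condP} as the interior critical-point condition for the convex functional $\Psi$ on \ref{enum:condK}$\cap\{X>0\}$ is a correct and illuminating gloss on why the cited lemma is the right tool, but it is supplementary exposition rather than a different argument.
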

\begin{proof}
Given $X$ as in the statement, \cite[Lemma~2]{Nikolayevsky} implies that there exists a Riemannian nilsoliton metric. By the observation of  Remark~\ref{rem:wick}, a nilsoliton metric exists for any $\delta$ in $\ker M_{\Delta,2}$.
\end{proof}
\begin{remark}
Example~\ref{example:lorentziano} shows that not all nilsoliton metrics are obtained by transferring a Riemannian nilsoliton metric.
\end{remark}

Recall from~\cite{ContiRossi:IndefiniteNilsolitons} that, given a metric $\widetilde{\langle,\rangle}$ on a Lie algebra $\tilde\g$, a \emph{pseudo-Iwasawa decomposition} is an orthogonal splitting $\tilde\g=\g\oplus\lie a$, where $\lie g$ is an ideal, $\lie a$ is an abelian subalgebra, and $\ad X$ is self-adjoint for all $X\in\lie a$. There is a correspondence between nilsolitons and pseudo-Iwasawa Einstein solvmanifolds (see~\cite{ContiRossi:IndefiniteNilsolitons}), which can be specialized to the nice setting as follows:
\begin{proposition}
\label{prop:niceEinsteinNilradical}
Let $\tilde\g$ be a nonunimodular nice solvable Lie algebra with a diagonal metric of pseudo-Iwasawa type satisfying the Einstein equation $\widetilde{\Ric}=\lambda \id$, $\lambda\neq0$. Then $[\tilde\g,\tilde\g]$ with the induced metric is a nice Lie algebra with a diagonal nilsoliton metric of type Nil4 satisfying
\[\Ric=\lambda \id-\lambda N,\]
where $N$ is the diagonal Nikolayevsky derivation.
\end{proposition}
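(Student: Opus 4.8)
The plan is to reduce the statement to the general correspondence between nonunimodular pseudo-Iwasawa Einstein solvmanifolds and nilsolitons of type Nil4 established in~\cite{ContiRossi:IndefiniteNilsolitons}, and then to exploit the nice structure, via Theorem~\ref{thm:Xequalsb}, to write the Ricci operator explicitly in terms of the diagonal Nikolayevsky derivation. I would organize the argument in three steps: first, that $\g:=[\tilde\g,\tilde\g]$ is a nice nilpotent Lie algebra on which the restricted metric is diagonal and nondegenerate; second, that this restricted metric is a nilsoliton of type Nil4; third, the rewriting $\Ric=\lambda\id-\lambda N$.

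For the first step, fix the nice basis $\{e_1,\dots,e_N\}$ of $\tilde\g$. Since every bracket is a multiple of a basis vector, $\g=[\tilde\g,\tilde\g]$ is spanned by the subset $\mathcal{B}'$ of those $e_k$ occurring with nonzero coefficient as some $[e_i,e_j]$; it is an ideal, and it is nilpotent because the derived algebra of a solvable Lie algebra is nilpotent. I would then verify that $(\g,\mathcal{B}')$ is nice: a nonzero bracket of two elements of $\mathcal{B}'$ is again an output, hence lies in $\mathcal{B}'$, and for $e_i\in\mathcal{B}'$ the interior product $e_i\hook d(e^j|_{\g})$ is either a multiple of a single $e^l|_{\g}$ with $e_l\in\mathcal{B}'$ or vanishes on $\g$, the latter occurring exactly when the relevant index $l$ lies outside $\mathcal{B}'$ so that $e^l|_{\g}=0$. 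Finally, orthogonality of the pseudo-Iwasawa splitting forces $\g$ to be nondegenerate, so the restriction of the diagonal metric is a genuine metric, diagonal relative to $\mathcal{B}'$.

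For the second step, the correspondence of~\cite{ContiRossi:IndefiniteNilsolitons} applies to $\tilde\g$ — nonunimodular, pseudo-Iwasawa, Einstein with $\lambda\neq0$ — and identifies $\g=[\tilde\g,\tilde\g]$ as the nilradical, on which the induced metric is a nilsoliton of type Nil4: $\Ric=\lambda\id+D$ with $\lambda\neq0$ and $D\in\Der\g$ nonzero. One checks directly that nonunimodularity forces $N\neq0$, in agreement with the relation $D=-\lambda N$ obtained below: were the diagonal Nikolayevsky derivation $N$ zero, every derivation of $\g$ would be trace-free, so $\ad a|_{\g}$ would have zero trace for each $a\in\lie a$ while $\ad n|_{\g}$ is nilpotent for $n\in\g$, whence $\Tr\ad_{\tilde\g}X=0$ for all $X$ and $\tilde\g$ would be unimodular. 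For the third step, the metric of the first step is a diagonal nilsoliton on the nice nilpotent Lie algebra $\g$, so the equivalence of conditions (1) and (2) in Theorem~\ref{thm:Xequalsb} gives $\Ric=\lambda(\id-v^D)=\lambda\id-\lambda N$, with $v^D=N$ the diagonal Nikolayevsky derivation; comparison with $\Ric=\lambda\id+D$ also yields $D=-\lambda N$.

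The genuinely substantial input is the correspondence used in the second step, which is where the Einstein and nonunimodularity hypotheses do their work and which I would import wholesale from~\cite{ContiRossi:IndefiniteNilsolitons}; I expect this to be the main point, since identifying the derived algebra with the nilradical carrying the nilsoliton is exactly the content that does not follow from the combinatorics of nice algebras alone. The first and third steps are essentially bookkeeping, provided one is careful on two points: that restricting the nice basis to $\g$ is harmless precisely because interior products pointing outside $\g$ vanish rather than producing terms not proportional to a basis covector; and that the constant $\lambda$ in Theorem~\ref{thm:Xequalsb} is the Einstein constant, which holds because $\id$ is not a derivation of the non-abelian nilpotent algebra $\g$, so the decomposition $\Ric=\lambda\id+D$ determines $\lambda$ uniquely and its two occurrences cannot differ.
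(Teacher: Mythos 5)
Your proposal is correct and follows essentially the same route as the paper: identify the derived algebra as a nice nilpotent ideal with diagonal induced metric, invoke the pseudo-Iwasawa/nilsoliton correspondence of~\cite{ContiRossi:IndefiniteNilsolitons} (the paper cites its Corollary~3.13) to get $\Ric=\lambda\id+D$ of type Nil4, and then apply Theorem~\ref{thm:Xequalsb} to conclude $D=-\lambda N$. Your extra bookkeeping (nondegeneracy of the restriction, vanishing of interior products pointing outside the derived algebra, uniqueness of $\lambda$) is sound and merely makes explicit what the paper leaves implicit.
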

\begin{proof}
If $\{e_i\}$ is the nice basis, the derived Lie algebra is spanned by the $e_k$ such that some Lie bracket $[e_i,e_j]$ is a nonzero multiple of $e_k$; this shows that the derived algebra is nice and the induced metric is diagonal. By \cite[Corollary~3.13]{ContiRossi:IndefiniteNilsolitons}, the derived algebra is a nilsoliton with $\Ric=\lambda \id+D$, $D=\ad H$ and $\Tr D\neq0$. By Theorem~\ref{thm:Xequalsb}, we have $D=-\lambda N$.
\end{proof}

\begin{proposition}
\label{prop:NicePseudoIwasawa}
Let $\g$ be a nice nilpotent Lie algebra with a diagonal metric $\langle,\rangle$ of type Nil4. Then $\Ric=\lambda \id-\lambda N$, where $N$ is the (nonzero) diagonal Nikolayevsky derivation, and the semidirect product $\tilde\g=\g\rtimes_N\Span{e_0}$ is a nice solvable Lie algebra with an Einstein diagonal pseudo-Iwasawa metric
\[\langle,\rangle -\frac{\Tr N}{\lambda}e^0\otimes e^0.\]
\end{proposition}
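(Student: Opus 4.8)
The plan is to separate the three assertions: the algebraic identity for $\Ric$, the combinatorial claim that $\tilde\g$ is nice, and the geometric claim that the extended metric is Einstein of pseudo-Iwasawa type.

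First, the identity $\Ric=\lambda\id-\lambda N$ is immediate from the equivalence of (1) and (2) in Theorem~\ref{thm:Xequalsb}: a diagonal metric of type Nil4 is in particular a diagonal nilsoliton, so (2) gives $\Ric=\lambda(\id-v^D)=\lambda\id-\lambda N$ with $N=v^D$ the diagonal Nikolayevsky derivation. Since the metric is of type Nil4 we have $D=\Ric-\lambda\id=-\lambda N\neq0$ and $\lambda\neq0$, whence $N\neq0$. I would record at this stage the identity $\Tr N=\Tr(N^2)$, obtained by setting $X=N$ in~\eqref{eqn:nik}; as $N$ is a nonzero real semisimple operator this yields $\Tr N=\Tr(N^2)>0$, a fact needed for nondegeneracy below.

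Next I would check that $\tilde\g=\g\rtimes_N\Span{e_0}$ is nice for the basis $\{e_0,e_1,\dotsc,e_n\}$. Because $N\in\Der\g$, the semidirect product is automatically a Lie algebra (no Jacobi check) and is solvable since $\g$ is nilpotent. The internal brackets of $\g$ are unchanged, and the new brackets $[e_0,e_i]=Ne_i=v_ie_i$ are each multiples of a single basis vector; dually $de^0=0$ and $e_0\hook de^k=-v_ke^k$. The one delicate point is $e_k\hook de^k$, which in $\tilde\g$ picks up the term $v_ke^0$: I must verify that the $\g$-part of $e_k\hook de^k$ vanishes, i.e.\ that no bracket $[e_k,e_j]$ is a multiple of $e_k$. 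This holds because such a loop would make $e_k$ an eigenvector of $\ad e_j$ with nonzero eigenvalue, contradicting nilpotency. Hence every interior product is a multiple of a single dual basis vector and $\tilde\g$ is nice.

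For the last assertion I would first note the pseudo-Iwasawa structure: the splitting $\tilde\g=\g\oplus\Span{e_0}$ is orthogonal by construction of the metric $\langle,\rangle-\frac{\Tr N}{\lambda}e^0\otimes e^0$, the factor $\Span{e_0}$ is abelian, and $\ad e_0=N$ is self-adjoint, being diagonal with respect to a diagonal metric; moreover $\langle e_0,e_0\rangle=-\Tr N/\lambda\neq0$ makes the metric nondegenerate and $\Tr\ad e_0=\Tr N\neq0$ gives nonunimodularity. The Einstein condition is the crux. The cleanest route is to invoke the converse half of the nilsoliton/Einstein correspondence of~\cite{ContiRossi:IndefiniteNilsolitons}, extending a type Nil4 nilsoliton to an Einstein pseudo-Iwasawa solvmanifold $\g\rtimes_D\R$, which rescaling the extra generator identifies with $\g\rtimes_N\Span{e_0}$. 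To pin down the exact coefficient, I would instead compute $\widetilde{\Ric}$ from the Koszul formula for this rank-one extension, obtaining $\widetilde{\Ric}(e_0,e_0)=-\Tr(N^2)$, vanishing cross terms, and $\widetilde{\Ric}|_\g=\Ric-\frac{\Tr N}{\langle e_0,e_0\rangle}N$; substituting $\langle e_0,e_0\rangle=-\Tr N/\lambda$ and $\Ric=\lambda\id-\lambda N$ gives $\widetilde{\Ric}|_\g=\lambda\id$ and $\widetilde{\Ric}(e_0,e_0)=-\Tr N=\lambda\langle e_0,e_0\rangle$, i.e.\ $\widetilde{\Ric}=\lambda\id$. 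I expect this computation to be the main obstacle: correctly tracking the connection terms of the extension and confirming the off-diagonal Ricci components vanish, so that the single coefficient $-\Tr N/\lambda$ simultaneously yields $\lambda\id$ along $\g$ and in the $e_0$ direction. The niceness check and the first identity are routine by comparison.
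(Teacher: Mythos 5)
Your proposal is correct, and its skeleton matches the paper's: the identity $\Ric=\lambda\id-\lambda N$ is obtained exactly as in the paper from Theorem~\ref{thm:Xequalsb}, and the extension is recognized as nice and pseudo-Iwasawa. The one genuine divergence is in how the Einstein condition and the coefficient $-\Tr N/\lambda$ are established. The paper simply cites \cite[Corollary~4.7]{ContiRossi:IndefiniteNilsolitons}, which already produces an Einstein pseudo-Iwasawa metric $\langle,\rangle+(\Tr D)\,E^0\otimes E^0$ on $\g\rtimes_D\Span{E_0}$ with $D=-\lambda N$, so that the stated coefficient falls out of the single rescaling $e_0=-\frac1\lambda E_0$ (whence $\ad e_0=N$ and $E^0\otimes E^0=\lambda^{-2}e^0\otimes e^0$); no curvature computation is needed. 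You instead redo the rank-one-extension Ricci computation: your formulas $\widetilde{\Ric}(e_0,e_0)=-\Tr(N^2)$ and $\widetilde{\Ric}|_\g=\Ric-\frac{\Tr N}{\langle e_0,e_0\rangle}N$ are the standard ones and do close the argument, with the pre-Einstein identity $\Tr N=\Tr(N^2)$ (from setting $X=N$ in~\eqref{eqn:nik}) playing precisely the role you assign it, and also guaranteeing $\Tr N>0$ and hence nondegeneracy. What your route buys is self-containedness (it reproves the special case of the cited corollary, and makes the vanishing of the mixed Ricci terms an explicit checkpoint); what the paper's route buys is brevity and the guarantee that the coefficient is already pinned down by the reference. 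Your explicit niceness verification --- in particular the observation that $e_k\hook de^k$ acquires only the term proportional to $e^0$ because no bracket $[e_k,e_j]$ can be a nonzero multiple of $e_k$ in a nilpotent algebra --- correctly fills in what the paper dismisses as ``obvious''.
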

\begin{proof}
Write $\Ric=\lambda \id+D$; by Theorem~\ref{thm:Xequalsb} we have $D=-\lambda N$. By \cite[Corollary~4.7]{ContiRossi:IndefiniteNilsolitons} (see also \cite[Theorem~4.7]{Yan:Pseudo-RiemannianEinsteinhomogeneous}), we have that $\g\rtimes_D\Span{E_0}$ has an Einstein pseudo-Iwasawa metric of the form
$\widetilde{\langle,\rangle}=\langle,\rangle+(\Tr D) E^0\otimes E^0$; setting $e_0=-\frac1\lambda E_0$, the metric takes the form
\[\widetilde{\langle,\rangle}=\langle,\rangle+\frac{\Tr D}{\lambda^2} e^0\otimes e^0=\langle,\rangle-\frac{\Tr N}{\lambda} e^0\otimes e^0.\]

Since $\g$ is nice and $N$ is diagonal, it is now obvious that $\tilde\g$ is nice and pseudo-Iwasawa.
\end{proof}

\begin{example}
Taking $\g$ as in Example~\ref{example:lorentziano} and applying Proposition~\ref{prop:NicePseudoIwasawa}, we obtain an Einstein metric on the solvable Lie algebra $\tilde\g=\g\rtimes_N\Span{e_0}$ of the form
\[e^1\otimes e^1 + e^2\otimes e^2 + \frac5{19}(e^3\otimes e^3+e^4\otimes e^4)-\frac5{361}e^5\otimes e^5+\frac{20}{361}e^6\otimes e^6+\frac{100}{6859}e^7\otimes e^7+\frac{224}{19}e^0\otimes e^0.\]
By construction, we have $\Ric=-\frac12\id$.
\end{example}

\section{Nilsolitons of dimension $\leq 7$}
It is well known that every nilpotent Lie algebra of dimension $\leq 6$ is a Riemannian nilsoliton (\cite{Will:RankOne,LauretWill:OnTheDiagonalization}). For dimension $7$, a classification is done in~\cite{FernandezCulma}. In this section we consider more generally nilsolitons of indefinite signature, though restricting to nice Lie algebras. A priori, this class of metrics includes both Einstein metrics and nilsolitons of type Nil4; since the Einstein case has been studied in \cite{ContiRossi:RicciFlat,ContiRossi:EinsteinNice}, we will focus on Nil4 metrics.

We will consider diagonal metrics relative to the nice basis, and normalize $\lambda$ to $-\frac12$, so that~\ref{enum:condK} reads
\begin{equation}\label{eq:condKnormalized}
M_\Delta \tran M_{\Delta} X= [1],
\end{equation}
and the nilsoliton equation of type Nil4
\begin{equation}
\label{eqn:normalizednil4}
\Ric=-\frac12\id +\frac12 N, \quad N\neq 0.
\end{equation}
Thus, we only consider Lie algebras with nonzero Nikolayevsky derivation; in particular, they are all graded.

To each nice Lie algebra $\g$ we associate the set
\[\mathbf{S}=\{\delta\mid \delta \text{ is the signature of a diagonal metric on $\g$ satisfying~\eqref{eqn:normalizednil4}}\}.\]
For fixed $\g$, the signature of a diagonal metric $g_1e^1\otimes e^1+\dotsc + g_ne^n\otimes e^n$ can be represented by the set of indices $i$ such that $g_i<0$. For instance, the signature of $-e^1\otimes e^1+e^2\otimes e^2-e^3\otimes e^3$ will be represented by the string $13$, and the signature of a positive definite metric by $\emptyset$. In addition, we will indicate by $\mathbf{S}_0$ the intersection of $\mathbf{S}$ with $\ker M_{\Delta,2}$, corresponding to signatures of nilsoliton metrics obtained by applying Corollary~\ref{cor:positivesignature}.

\begin{theorem}
\label{thm:nice6}
For each nice nilpotent Lie algebra of dimension $\leq 6$, the signatures of the diagonal nilsoliton metrics satisfying~\eqref{eqn:normalizednil4} are given in Table~\ref{table:Nilsoliton6}.
\end{theorem}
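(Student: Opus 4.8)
The plan is to prove the theorem by a finite, case-by-case application of Corollary~\ref{cor:KHLP} to the explicit list of nice nilpotent Lie algebras of dimension $\leq 6$ produced in~\cite{ContiRossi:Construction}. By the direct-sum remark preceding Theorem~\ref{thm:Xequalsb}, it suffices to treat the irreducible algebras, since a diagonal metric on a direct sum is a nilsoliton exactly when it restricts to nilsolitons with a common $\lambda$ on each factor; the reducible cases are recovered by combining factors. Fixing the normalization $\lambda=-\frac12$ of~\eqref{eqn:normalizednil4}, condition~\ref{enum:condK} becomes the linear system~\eqref{eq:condKnormalized}, and I would restrict attention to algebras with nonzero Nikolayevsky derivation, so that the resulting metrics are genuinely of type Nil4 rather than Einstein (the Einstein case having been settled in~\cite{ContiRossi:RicciFlat,ContiRossi:EinsteinNice}).

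For each such $\g$ with root matrix $M_\Delta$, I would first solve~\eqref{eq:condKnormalized}: its solution set is an affine subspace $X_0+\ker\tran M_\Delta$ whose dimension equals the corank of $M_\Delta$. Choosing an integer basis $\alpha_1,\dotsc,\alpha_k$ of $\ker\tran M_\Delta$, condition~\ref{enum:condP} reads $X^{\alpha_i}=c^{2\alpha_i}$; since the structure constants in the normal forms of~\cite{ContiRossi:Construction} are $\pm1$, the right-hand sides $c^{2\alpha_i}$ equal $1$, so~\ref{enum:condP} cuts the affine family down to the real solutions of a system of $k$ polynomial equations in the $k$ free parameters (generically a finite set). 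After discarding any solution lying on a coordinate hyperplane, i.e. failing~\ref{enum:condH}, each surviving vector $X$ contributes the signatures obtained by solving the $\Z_2$-linear system~\ref{enum:condL}, namely $M_{\Delta,2}\delta=\logsign X$; its solution set is a full coset of $\ker M_{\Delta,2}$. The set $\mathbf{S}$ is then the union of these cosets over all admissible $X$, and the explicit metrics are reconstructed by solving $\log\abs{X}=M_\Delta(v)$ and setting $g=(-1)^\delta\exp v$, exactly as in Example~\ref{example:lorentziano}.

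Two observations streamline the work. First, since every nilpotent Lie algebra of dimension $\leq6$ is a Riemannian nilsoliton, equation~\eqref{eq:condKnormalized} always admits a solution $X$ with all entries positive, for which~\ref{enum:condH} and~\ref{enum:condP} hold automatically by Corollary~\ref{cor:positivesignature}; this solution has $\logsign X=0$ and thus accounts for the whole coset $\mathbf{S}_0=\ker M_{\Delta,2}$. The genuinely new content is therefore the indefinite signatures outside $\ker M_{\Delta,2}$, which can only arise from solutions $X$ of~\ref{enum:condP} having mixed signs. Second, by Remark~\ref{rem:wick} the set $\mathbf{S}$ is a union of cosets of $\ker M_{\Delta,2}$, so it suffices to record one representative per coset in the table. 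The main obstacle is the polynomial system~\ref{enum:condP}: for the algebras of positive corank one must solve it completely and certify that all of its real, hyperplane-avoiding solutions have been found, so that no signature coset is missed. Because the dimension is small the coranks are low and each such system is tractable; carrying this out for every entry of the list of~\cite{ContiRossi:Construction} and tabulating the resulting cosets produces Table~\ref{table:Nilsoliton6}.
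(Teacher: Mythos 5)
Your proposal matches the paper's proof: the authors likewise run a case-by-case application of Corollary~\ref{cor:KHLP} over the classification of~\cite{ContiRossi:Construction} (automated via~\cite{demonblast}), noting that in dimension $\leq 6$ condition~\ref{enum:condP} is either vacuous or a single equation of degree at most $2$ in one variable, and your bookkeeping of the signatures as cosets of $\ker M_{\Delta,2}$ is exactly how the tables are assembled. The only slight imprecision is the claim that \ref{enum:condP} holds ``automatically'' for an all-positive solution of \ref{enum:condK} --- Corollary~\ref{cor:positivesignature} guarantees only that \emph{some} Riemannian nilsoliton exists, hence some all-positive $X$ satisfying all four conditions, which is all you actually need for the coset $\mathbf{S}_0$.
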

\begin{proof}
The proof is a case-by-case calculation using Corollary~\ref{cor:KHLP}, as in Example~\ref{example:lorentziano}. We used an updated version of the computer program~\cite{demonblast}, initially developed to obtain the classifications of~\cite{ContiRossi:Construction,ContiRossi:RicciFlat}. For all cases considered here, Condition~\ref{enum:condP} is either trivial or it boils down to an equation of degree $\leq 2$ in one variable, which the program solves automatically.
\end{proof}

\begin{footnotesize}
{\setlength{\tabcolsep}{2pt}
\begin{longtable}[htpc]{>{\ttfamily}c C C C}
\caption{Irreducible nice nilpotent Lie algebras of dimension $\leq 6$ that admit a diagonal nilsoliton metric of type Nil4\label{table:Nilsoliton6}}\\
\toprule
\textnormal{Name} & \g &   N & \mathbf{S} \\
\midrule
\endfirsthead
\multicolumn{4}{c}{\tablename\ \thetable\ -- \textit{Continued from previous page}} \\
\toprule
\textnormal{Name} & \g &   N & \mathbf{S} \\
\midrule
\endhead
\bottomrule\\[-7pt]
\multicolumn{4}{c}{\tablename\ \thetable\ -- \textit{Continued to next page}} \\
\endfoot
\bottomrule\\[-7pt]
\endlastfoot
31:1&0,0,e^{12}&2/3(1,1,2)&\{\emptyset,12,13,23\}\\[2pt]
421:1&0,0,e^{12},e^{13}&1/3(1,2,3,4)&\{\emptyset,124,13,234\}\\[2pt]
5321:1&0,0,e^{12},e^{13},e^{14}&1/12(2,9,11,13,15)&\{\emptyset,124,135,2345\}\\
5321:2&0,0,e^{12},e^{13},e^{14}+e^{23}&3/11(1,2,3,4,5)&\{\emptyset,135\}\\
532:1&0,0,e^{12},e^{13},e^{23}&5/12(1,1,2,3,3)&\{\emptyset,1245,135,234\}\\
521:2&0,0,0,e^{12},e^{24}+e^{13}&1/7(4,3,6,7,10)&\{\emptyset,125,145,24\}\\
52:1&0,0,0,e^{12},e^{13}&1/4(2,3,3,5,5)&\{\emptyset,123,125,134,145,2345,24,35\}\\
51:2&0,0,0,0,e^{12}+e^{34}&3/4(1,1,1,1,2)&\{\emptyset,12,1234,135,145,235,245,34\}\\[2pt]
64321:1&0,0,e^{12},e^{13},e^{14},e^{15}&1/11(1,9,10,11,12,13)&\{\emptyset,1246,135,23456\}\\
64321:2&0,0,e^{12},e^{13},e^{14},e^{15}+e^{23}&13/68(1,3,4,5,6,7)&\{\emptyset,1246\}\\
64321:3&0,0,- e^{12},e^{13},e^{14},e^{34}+e^{25}&1/6(1,3,4,5,6,9)&\{\emptyset,1246,1356,2345\}\\
64321:4&0,0,e^{12},e^{13},e^{23}+e^{14},e^{24}+e^{15}&3/13(1,2,3,4,5,6)&\{\emptyset,135\}\\
64321:5&0,0,- e^{12},e^{13},e^{23}+e^{14},e^{34}+e^{25}&11/52(1,2,3,4,5,7)&\{\emptyset,1356\}\\
6431:1&0,0,e^{12},e^{13},e^{23},e^{14}&1/4(1,2,3,4,5,5)&\{\emptyset,1245,1356,2346\}\\
6431:2a&0,0,e^{12},e^{13},e^{23},e^{14}+e^{25}&7/20(1,1,2,3,3,4)&\{\emptyset,1245,1356,2346\}\\
6431:2b&0,0,e^{12},- e^{13},e^{23},e^{14}+e^{25}&7/20(1,1,2,3,3,4)&\{\emptyset,1245,1356,2346\}\\
6431:3&0,0,e^{12},e^{13},e^{23},e^{24}+e^{15}&7/20(1,1,2,3,3,4)&\{\emptyset,1245,135,234\}\\
6321:2&0,0,0,e^{12},e^{14},e^{15}+e^{23}&1/15(4,9,12,13,17,21)&\{\emptyset,1235,1346,2456\}\\
6321:4&0,0,0,- e^{12},e^{14}+e^{23},e^{15}+e^{34}&1/20(6,11,12,17,23,29)&\{\emptyset,125,146,2456\}\\
632:2&0,0,0,e^{12},e^{14},e^{24}+e^{13}&1/13(5,6,12,11,16,17)&\{\emptyset,1256,146,245\}\\
632:3a&0,0,0,e^{12},e^{23}+e^{14},e^{13}+e^{24}&3/7(1,1,2,2,3,3)&\{\emptyset,1256,146,245\}\\
632:3b&0,0,0,e^{12},e^{23}-e^{14},e^{24}+e^{13}&3/7(1,1,2,2,3,3)&\{\emptyset,1256,146,245\}\\
631:1&0,0,0,e^{12},e^{13},e^{14}&1/7(2,5,6,7,8,9)&\{\emptyset,1236,1256,134,145,23456,246,35\}\\
631:2&0,0,0,e^{12},e^{13},e^{24}&1/12(6,5,9,11,15,16)&\{\emptyset,1236,1256,1346,1456,2345,24,35\}\\
631:3&0,0,0,e^{12},e^{13},e^{23}+e^{14}&1/8(3,5,6,8,9,11)&\{\emptyset,1256,145,246\}\\
631:4&0,0,0,e^{12},e^{13},e^{24}+e^{15}&2/11(2,3,4,5,6,8)&\{\emptyset,1236,1346,24\}\\
631:5a&0,0,0,e^{12},e^{13},e^{35}+e^{24}&1/2(1,1,1,2,2,3)&\{\emptyset,1236,1256,1346,1456,2345,24,35\}\\
631:5b&0,0,0,- e^{12},e^{13},e^{24}+e^{35}&1/2(1,1,1,2,2,3)&\{\emptyset,1236,1256,1346,1456,2345,24,35\}\\
631:6&0,0,0,e^{12},e^{13},e^{34}+e^{25}&1/2(1,1,1,2,2,3)&\{\emptyset,1236,125,134,1456,2345,246,356\}\\
63:1&0,0,0,e^{12},e^{13},e^{23}&3/5(1,1,1,2,2,2)&\{\emptyset,123,1256,1346,145,2345,246,356\}\\
621:3&0,0,0,0,e^{12},e^{15}+e^{34}&1/12(5,8,9,9,13,18)&\{\emptyset,1236,1246,1345,15,2356,2456,34\}\\
62:3&0,0,0,0,e^{12},e^{13}+e^{24}&1/5(3,3,4,4,6,7)&\{\emptyset,1234,126,135,1456,2356,245,346\}\\
62:4a&0,0,0,0,e^{13}+e^{24},e^{34}+e^{12}&2/3(1,1,1,1,2,2)&\{\emptyset,1234,125,136,1456,2356,246,345\}\\
62:4b&0,0,0,0,e^{24}-e^{13},e^{34}+e^{12}&2/3(1,1,1,1,2,2)&\{\emptyset,1234,125,136,1456,2356,246,345\}\\
\end{longtable}
}
\end{footnotesize}

\begin{remark}
The only nonnice nilpotent Lie algebra of dimension $6$ also admits nilsoliton metrics: a Riemannian nilsoliton metric was constructed in \cite{Will:RankOne,LauretWill:OnTheDiagonalization}, and a nilsoliton metric of signature $(2,4)$ can be obtained by a Wick rotation of the form described in~\cite{Yan:Pseudo-RiemannianEinsteinhomogeneous}, namely by fixing the $\Z_2$-grading defined by the eigenspaces of the Nikolaevsky derivation and changing the sign of the $1$-eigenspace. However, in the absence of a canonical basis it does not make sense to restrict to diagonal metrics, so it is not possible to extend Theorem~\ref{thm:nice6} to include this case.
\end{remark}
\begin{remark}
The Ricci operator of the diagonal nilsoliton metrics constructed in Theorem~\ref{thm:nice6} is determined by~\eqref{eqn:normalizednil4} in terms of the diagonal Nikolayevsky derivation $N$; for this reason, we included $N$ in Table~\ref{table:Nilsoliton6} and the analogous tables for higher dimensions. For instance, for \texttt{31:1}, we obtain
\[\Ric=-\frac12\bigl(\id -\frac23(1,1,2)^D\bigr)=\bigl(-\frac16,-\frac16,\frac16\bigr)^D.\]
\end{remark}

The situation for dimension $7$ is different. To begin with, some nice Lie algebras do not admit a nilsoliton metric of type Nil4 because $N$ is zero. In addition, the polynomial equations \ref{enum:condP} need to be handled with more care in a few cases, in particular for the $5$ one-parameter families.
\begin{lemma}
\label{lemma:754321:9}
For the one-parameter family of Lie algebras
\[
\texttt{754321:9}\quad(0,0,(1-a) e^{12},e^{13},a e^{14}+e^{23},e^{15}+e^{24},e^{16}+e^{25}+e^{34}) \quad a\neq 0,1\]
the signatures of the diagonal metrics satisfying~\eqref{eqn:normalizednil4} are listed in Table~\ref{table:FamiliesDim7}.
\end{lemma}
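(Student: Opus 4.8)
The plan is to apply Corollary~\ref{cor:KHLP} directly to this one-parameter family, treating the family parameter $a$ as a symbolic constant throughout. First I would write down the root matrix $M_\Delta$ for the nice diagram of \texttt{754321:9}; since the bracket structure is fixed (only the structure constants depend on $a$), the matrix $M_\Delta$, its corank, and a basis $\alpha_1,\dots,\alpha_k$ of $\ker\tran M_\Delta$ are all independent of $a$. I would then solve the linear system \eqref{eq:condKnormalized}, i.e. $M_\Delta\tran M_\Delta X=[1]$ with $\lambda=-\tfrac12$, to obtain the affine space of candidate vectors $X\in-2\lambda b+\ker\tran M_\Delta$; the free parameters here are the coordinates along $\ker\tran M_\Delta$, and the entries of $X$ will be rational expressions in $a$ together with these parameters.

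Next I would impose condition~\ref{enum:condP}. For each basis vector $\alpha_i$ of $\ker\tran M_\Delta$, the requirement $X^{\alpha_i}=c^{2\alpha_i}$ becomes a polynomial equation relating the free parameters and $a$ (the structure constants $c$ being $1-a$, $a$, and $1$ in the appropriate slots). Solving this system determines the admissible $X$ as a function of $a$. Then condition~\ref{enum:condH} rules out those values of $a$ and those parameter choices for which some entry $x_j$ of $X$ vanishes — these are the degenerate loci where no nilsoliton of the predicted signature exists. Finally, for each surviving $X$, condition~\ref{enum:condL} reads $\logsign X=M_{\Delta,2}\delta$: computing the sign pattern of $X$ pins down $\logsign X$, and solving the affine equation over $\Z_2$ enumerates all signatures $\delta$, yielding the entries recorded in Table~\ref{table:FamiliesDim7}. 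As in Example~\ref{example:lorentziano}, the solution set of $M_{\Delta,2}\delta=\logsign X$ is a coset of $\ker M_{\Delta,2}$, so once one solution is found the full list follows by adding $\ker M_{\Delta,2}$.

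The main obstacle is that here everything depends on the parameter $a$, so the sign pattern $\logsign X$ — and hence $\mathbf{S}$ — may \emph{jump} as $a$ crosses values where some $x_j$ changes sign or where \ref{enum:condH} fails. Concretely, after solving \ref{enum:condP} the entries of $X$ are rational functions of $a$, and I would need to locate the zeros and poles of these functions (including $a=0,1$, which are already excluded) to partition the parameter line into intervals on which $\logsign X$ is constant. On each such interval the set of signatures is fixed, but different intervals may give genuinely different $\mathbf{S}$; the care alluded to in the text before the lemma is precisely this case-splitting on the sign of $a$ and of the relevant polynomials in $a$. I expect the bookkeeping — determining exactly which real $a$ give a solution to \ref{enum:condP} satisfying \ref{enum:condH}, and tracking the resulting sign changes — to be the delicate part, whereas the linear algebra producing $X$ and the $\Z_2$-enumeration of $\delta$ are routine. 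This is exactly the kind of symbolic case analysis the authors automate with the program~\cite{demonblast}, and I would organize the proof as a verification that, on each sign-stratum of $a$, the listed signatures in Table~\ref{table:FamiliesDim7} are precisely those satisfying \ref{enum:condK}--\ref{enum:condP}.
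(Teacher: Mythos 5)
Your overall strategy is the one the paper uses: apply Corollary~\ref{cor:KHLP} with $\lambda=-\tfrac12$, solve the linear system for $X$, turn \ref{enum:condP} into polynomial equations involving $a$, and then stratify the $a$-line into intervals on which the set of signatures is constant. However, one concrete step as you describe it would not go through. After imposing \ref{enum:condP} the entries of $X$ are \emph{not} rational functions of $a$: eliminating variables reduces the system \eqref{eqn:P1} to a cubic in one remaining unknown (the paper's $p(x_4)=0$ in \eqref{eqn:eqterzogradoP1}), whose coefficients depend on $a$. So the solutions are algebraic of degree three over $\Q(a)$, the \emph{number} of real solutions itself jumps as $a$ varies (it is governed by the sign of the discriminant of $p$, which changes at $k^2=\alpha=\frac1{16}(123\sqrt{41}-767)$), and your plan of ``locating zeros and poles of rational functions'' has no direct meaning here. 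The correct replacement, which is what the paper does, is: compute the discriminant to determine on which intervals $p$ has one or three real roots; check that the roots never hit the values forbidden by \ref{enum:condH} (nor zero, nor the poles of the back-substitution formulas) as $a$ ranges over each interval; and then invoke continuity of the roots in $a$ to conclude that the sign pattern of $X$ is constant on each interval, so it suffices to evaluate at one sample value of $a$ per interval.

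A second point your write-up underplays is the branch structure of \ref{enum:condP}. The equations only constrain $x_8^2/(x_6x_7)=a^2$ and $x_9^2/(x_6x_7)=(1-a)^2$, so dividing one by the other gives $x_9=\epsilon\frac{1-a}{a}x_8$ with an undetermined sign $\epsilon=\pm1$; each choice of $\epsilon$ leads to a different cubic (through $k=a+\epsilon(1-a)$, with the degenerate case $k=0$ at $a=\tfrac12$ needing separate treatment), and for a fixed $a$ there are in general \emph{several} admissible vectors $X$ with different sign patterns. The entries of Table~\ref{table:FamiliesDim7} are unions over all these branches (further collapsed using the symmetry $a\mapsto 1-a$, $x_8\leftrightarrow x_9$). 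Your phrasing ``the admissible $X$ as a function of $a$'' suggests a single-valued dependence; without explicitly enumerating the $\epsilon$-branches and the varying number of real roots of each cubic, you would miss some of the listed signatures.
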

\begin{proof}
Equation~\eqref{eq:condKnormalized} gives
\[X=(x_1,\dotsc, x_9)=(x_9,\frac{2}{5}-x_4,x_8,x_4,x_4,\frac35-x_4-x_8-x_9,\frac{2}{5}-x_8-x_9,x_8,x_9),\]
and condition~\ref{enum:condP} can be written in the form
\begin{equation}
\label{eqn:P1}
\frac{x_4^2}{(\frac25-x_4)x_6}=1, \qquad
\frac{x_8^2}{x_6x_7}=a^2, \qquad
\frac{x_9^2}{x_6x_7}=(1-a)^2,\end{equation}
with
\begin{equation}
\label{eqn:754321:9:linear}
x_4+x_6-x_7=\frac15, \qquad x_7+x_8+x_9=\frac25.
\end{equation}
The equations are invariant under the transformation
\[a\mapsto 1-a, \quad x_8\leftrightarrow x_9.\]
Accordingly, we will assume $a\geq \frac12$; the signatures occurring for $a<\frac12$ will be deduced exploiting the symmetry.

Notice that the value $x_4=\frac25$ is not allowed by \ref{enum:condH}. The first equation in~\eqref{eqn:P1} gives
\[x_6=\frac{5x_4^2}{2-5x_4};\]
dividing the second by the third gives
\[x_9=\epsilon \frac{1-a}{a}x_8, \quad \epsilon=\pm1.\]
The first linear relation in~\eqref{eqn:754321:9:linear} gives
\[x_7=-\frac15+x_4+x_6=\frac{15x_4-2}{5(2-5x_4)}.\]
We have
\[x_8+x_9=\frac{k}ax_8, \qquad k=a+\epsilon (1-a).\]
The possibility $k=0$ only occurs for $\epsilon=-1$, $a=\frac12$, giving
\[X=\left(\pm\frac3{5\sqrt{10}}, \frac4{25},\mp\frac3{5\sqrt{10}},\frac6{25},\frac6{25},\frac9{25},\frac25,\mp\frac3{5\sqrt{10}},\pm\frac3{5\sqrt{10}}\right).\]
Otherwise, we have $k\neq0$. Then the second linear relation in~\eqref{eqn:754321:9:linear} implies
\[x_8=\frac ak\left(\frac25-x_7\right)=
\frac{a(6-25x_4)}{5k(2 -5 x_4)}
.\]
In particular, the value $x_4=\frac6{25}$ is not allowed. The middle equation in~\eqref{eqn:P1} then gives
\begin{equation}
 \label{eqn:eqterzogradoP1}
 p(x_4)=
 25 (2-15x_4)x_4^2+\frac{(6-25 x_4)^2}{k^2}=0.
\end{equation}
For $\epsilon=1$, we have $k=1$ and~\eqref{eqn:eqterzogradoP1} is a constant-coefficient equation in $x_4$ with three solutions; the solution $x_4=\frac25$ must be discarded by~\ref{enum:condH}, so we are left with $x_4=\frac15,\frac65$, giving
\begin{gather*}
X=
\left(\frac{1-a}{5},\frac{1}{5},\frac{a}{5},\frac{1}{5},\frac{1}{5},\frac{1}{5},\frac{1}{5},\frac{a}{5},\frac{1-a}{5}\right),
\left(\frac{6 (1-a)}{5},-\frac{4}{5},\frac{6 a}{5},\frac{6}{5},\frac{6}{5},-\frac{9}{5},-\frac{4}{5},\frac{6 a}{5},\frac{6(1-a)}{5}\right).
\end{gather*}
For $\epsilon=-1$, we have
\begin{multline*}
X=\biggl(\frac{(a-1) (25 x_4-6)}{5 (2 a-1) (5 x_4-2)},\frac{2-5x_4}{5},\frac{a (25 x_4-6)}{5 (2 a-1) (5 x_4-2)},x_4,x_4,\\
\frac{5 x_4^2}{2-5 x_4},\frac{2-15 x_4}{5 (5 x_4-2)},\frac{a (25 x_4-6)}{5 (2 a-1) (5 x_4-2)},\frac{(a-1) (25 x_4-6)}{5 (2 a-1) (5 x_4-2)}\biggr).
\end{multline*}
In this case, $x_4=\frac25$, $x_4=\frac6{25}$ are not roots of $p$, any root of $p$ satisfies $x_4>\frac2{15}$, and $a\neq 0,1$ by assumption; therefore, any root of $p$ determines a vector $X$ sastisfying the conditions of Corollary~\ref{cor:KHLP}.
The discriminant of $p$ is positive for
\[0<k^2<\alpha=\frac{1}{16} (123 \sqrt{41}-767)\sim 1.29,\]
$0$ for $k^2=\alpha$ and negative otherwise.
The entries of $X$ depend continuosly on $k$, so it suffices to determine their sign on one value in each interval $0<k^2<1$, $1<k^2<\alpha$, $\alpha<k^2$, or equivalently (given our assumption $a>\frac12$), $\frac12<a<1$, $1<a<\frac12+\beta$, $\frac12+\beta<a$, where we have set
\[\beta = \frac{1}{8} \sqrt{123 \sqrt{41}-767}\sim 0.57.\]
Observe that the signs of $x_2,x_8$ and $x_9$ determine the signs of the other entries. Using the symmetry, we obtain the list of sign configurations and signatures in Table~\ref{table:signs754321:9}.
\end{proof}

\begin{table}[thp]
\centering
\caption{Nilsoliton signatures for \texttt{754321:9}\label{table:signs754321:9}}
\begin{tabular}{CC|CCC|C}
\toprule
\epsilon & a & \sign x_2 & \sign x_8 & \sign x_9 & \mathbf{S} \\
\midrule
\multirow{2}{*}{$1$} & \multirow{2}{*}{$ a<0$} & - & - & + & \{1345,47\} \\
&  & + & - & + & \{12457, 234\} \\\cmidrule(rl){3-6}
\multirow{2}{*}{$1$} & \multirow{2}{*}{$0< a<1$} & - & + & + & \{125,237\}\\
&  & + & + & + &  \{\emptyset,1357\} \\\cmidrule(rl){3-6}
\multirow{2}{*}{$1$} & \multirow{2}{*}{$a>1$} & - & + & - & \{123467, 2456\} \\
&  & + & + & - & \{146,34567\} \\\cmidrule(rl){3-6}
-1 & a<\frac{1}{2}-\beta \vee  \frac{1}{2}+\beta<a & + & + & + &  \{\emptyset,1357\} \\\cmidrule(rl){3-6}
\multirow{2}{*}{$-1$} & \multirow{2}{*}{$\frac{1}{2}-\beta< a<0 \vee 1< a\leq\frac{1}{2}+\beta$} & + & + & + &  \{\emptyset,1357\}\\
&  & - & + & + & \{125,237\} \\\cmidrule(rl){3-6}
\multirow{3}{*}{$-1$} & \multirow{3}{*}{$0< a<\frac{1}{2}$} & + & + & - & \{146,34567\} \\
&  & + & - & + & \{12457,234\} \\
&  & - & - & + & \{1345,47\} \\\cmidrule(rl){3-6}
\multirow{2}{*}{$-1$} & \multirow{2}{*}{$a=\frac{1}{2}$} & + & - & + & \{12457,234\} \\
&  & + & + & - & \{146,34567\} \\\cmidrule(rl){3-6}
\multirow{3}{*}{$-1$} & \multirow{3}{*}{$\frac{1}{2}< a<1$} & + & + & - & \{146,34567\} \\
&  & + & - & + & \{12457,234\} \\
&  & - & + & - & \{123467,2456\} \\
\bottomrule
\end{tabular}
\end{table}

\begin{lemma}\label{lemma:7421:14}
For the one-parameter families of Lie algebras
\begin{gather*}
\texttt{7421:14} \quad (0,0,0, (a-1) e^{12}, a e^{13},e^{14}+e^{23},e^{16}+e^{25}+e^{34}),\quad a\neq0,1 \\
\texttt{7431:13a} \quad (0,0,0,(A-1)e^{12},e^{14} +e^{23},Ae^{13}+e^{24} ,e^{15}+e^{26} +e^{34}), \quad A\neq 0,1 \\
\texttt{7431:13b} \quad (0,0,0,(A-1)e^{12},e^{14} -e^{23},Ae^{13}+e^{24} ,e^{15}+e^{26} +e^{34}), \quad A\neq 0,1
\end{gather*}
the signatures of the diagonal metrics satisfying~\eqref{eqn:normalizednil4} are listed in Table~\ref{table:FamiliesDim7}.
\end{lemma}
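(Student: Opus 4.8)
The plan is to treat all three families uniformly by the method used for Lemma~\ref{lemma:754321:9}, applying Corollary~\ref{cor:KHLP} with $\lambda=-\frac12$, so that condition \ref{enum:condK} becomes the linear system~\eqref{eq:condKnormalized}. First I would observe that \texttt{7431:13a} and \texttt{7431:13b} differ only by the sign of a single structure constant and share the same nice diagram, hence the same root matrix $M_\Delta$ and the same reduction $M_{\Delta,2}$. Since conditions \ref{enum:condK}, \ref{enum:condH}, \ref{enum:condL} depend only on $M_\Delta$ and $M_{\Delta,2}$, while \ref{enum:condP} involves the structure constants only through $c^{2\alpha_i}$, the sign discrepancy is squared away: the two families produce identical admissible vectors $X$ and identical signature sets $\mathbf{S}$. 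Thus the computation need only be carried out once for this diagram and once for \texttt{7421:14}.

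For each diagram I would then solve~\eqref{eq:condKnormalized} to obtain a parametric description of the admissible vectors $X=(x_1,\dotsc,x_m)$, the number of free parameters being $\dim\coker M_\Delta$, and simultaneously read off a basis $\alpha_1,\dotsc,\alpha_k$ of $\ker\tran{M_\Delta}$. Condition \ref{enum:condP}, namely $X^{\alpha_i}=c^{2\alpha_i}$, then becomes an explicit system of polynomial equations whose coefficients are rational in the family parameter $a$ (resp.\ $A$). As in Lemma~\ref{lemma:754321:9}, I expect this system to carry a symmetry under a transformation of the form $a\mapsto 1-a$ (resp.\ $A\mapsto 1-A$) combined with a permutation of the $x_i$; exploiting it allows one to assume $a\geq\frac12$ and recover the remaining signatures from the symmetry.

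The core step is to eliminate variables and reduce condition \ref{enum:condP} to a single polynomial equation $p(x)=0$ in one coordinate, with coefficients rational in $a$, and to count its admissible real roots as $a$ ranges over $\R\setminus\{0,1\}$. I would discard the roots excluded by \ref{enum:condH} (those making some $x_i$ vanish) and study the sign of the discriminant of $p$ to locate the critical values of $a$ at which the number of real roots changes; these critical values partition the parameter line into finitely many intervals. On each open interval the entries of $X$ depend continuously on $a$ without changing sign, so it suffices to evaluate the signs of $X$ at a single representative value of $a$ per interval. Finally, for each admissible $X$, condition \ref{enum:condL} requires $\logsign X=M_{\Delta,2}\delta$, a linear system over $\Z_2$ whose solutions form a coset of $\ker M_{\Delta,2}$ and yield all $\delta\in\mathbf{S}$ realizing that $X$; collecting these over all intervals and all roots produces the entries of Table~\ref{table:FamiliesDim7}.

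The main obstacle I anticipate is the discriminant analysis together with the sign bookkeeping: pinning down exactly the critical values of the parameter, verifying that \ref{enum:condH} and \ref{enum:condP} hold on each open interval, and tracking how the sign pattern of $X$ — and hence the coset of signatures — jumps across the critical values. This is precisely the delicate part already carried out in detail and tabulated in the proof of Lemma~\ref{lemma:754321:9}, and I would rely on the computer-assisted computation of~\cite{demonblast} to handle the explicit elimination and root counting.
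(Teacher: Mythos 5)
Your proposal follows essentially the same route as the paper: solve~\eqref{eq:condKnormalized} to parametrize $X$, reduce \ref{enum:condP} to a single cubic in one coordinate with coefficients rational in the parameter, use the discriminant together with continuity (after discarding the values excluded by \ref{enum:condH}) to pin down the sign pattern of $X$ on each parameter interval, and read off $\mathbf{S}$ from \ref{enum:condL}; your observation that \texttt{7431:13a} and \texttt{7431:13b} yield identical answers because signs of structure constants are squared away in \ref{enum:condP} is also exactly how the paper handles that pair. The only discrepancy is a minor one in your anticipated symmetry: for these families it is not the involution $a\mapsto 1-a$ but the order-three map $a\mapsto\frac{1}{1-a}$, which cyclically permutes three of the coordinates of $X$ and the intervals $(-\infty,0)$, $(0,1)$, $(1,+\infty)$, so one fixes a single interval such as $a>1$ rather than the half-line $a\geq\frac12$; since you hedge this guess and your discriminant-plus-continuity fallback covers each interval directly, this does not affect the soundness of the plan.
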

\begin{proof}
For \texttt{7421:14}, we have
\[X=\left(\frac{1}{19}+x_7,x_2,\frac{4}{19},\frac{7}{19}-x_7-x_2,\frac{8}{19}-x_7-x_2,\frac1{19}+x_2,x_7\right);\]
this gives the equations
\begin{equation}
\label{eqn:742114}
x_2\left(\frac1{19}+x_2\right)=x_4\left(x_4+\frac1{19}\right)a^2, \qquad x_7\left(\frac1{19}+x_7\right)=x_4\left(x_4+\frac1{19}\right)(a-1)^2,
\end{equation}
where
\begin{equation}
\label{eqn:7421:14:linear}
x_2+x_4+x_7=\frac7{19}.
\end{equation}
By~\eqref{eqn:742114}, if one of $x_2,x_4,x_7$ is in the interval $(-\frac1{19},0)$, then so are the others, but this contradicts~\eqref{eqn:7421:14:linear}. Therefore, $\logsign X$ is determined by the signs of $x_2,x_4,x_7$. Notice also that replacing $a$ with $\eta(a)=\frac1{1-a}$ has the effect of cycling through the variables $x_2,x_7,x_4$. Since $\eta$ is an order three transformation cycling through the intervals $(-\infty,0)$, $(0,1)$ and $(1,+\infty)$, it suffices to determine the sign configurations for one interval, and deduce the other cases by cylicity.

Assume $a>1$. Using~\eqref{eqn:7421:14:linear}, we eliminate $x_2$ from~\eqref{eqn:742114} obtaining
\[\begin{cases}
x_7 (19x_7+1)=(a-1)^2 x_4 (19x_4+1),\\
(19 x_4+19 x_7-8) (19 x_4+19 x_7-7)=19 a^2 x_4 (19x_4+1)
\end{cases}.\]
Solving for $x_4^2$ in the first equation and substituting in the second, we obtain
\[\begin{cases}
x_4^2=\frac{-a^2 x_4+2 a x_4-x_4+19 x_7^2+x_7}{19 (a-1)^2},\\
x_4 =\dfrac{a(152 x_7-28)+361 x_7^2-133 x_7+28}{19(a -1) (19 x_7-8)}\\
\end{cases};\]
notice that $x_7=8/19$ does not give a solution of~\eqref{eqn:742114} because of our assumption on $a$. Taking the first equation minus the second squared and substituting, we obtain
the following third-degree polynomial in $x_7$:
\begin{multline*}
p(x_7)=1008 - 2016 a + 1008 a^2+ x_7(-10260 + 19304 a - 10260 a^2)\\
+ x_7^2(25992 - 68951 a + 25992 a^2) +116603 a x_7^3=0.
\end{multline*}

For $a=2$, we see by an explicit computation that $p$ has three real roots; the corresponding sign configurations are
\begin{equation}
 \label{eqn:742114:possiblelogsignX}
x_2>0, x_4>0, x_7>0; \qquad x_2>0, x_4>0, x_7<0; \qquad x_2>0, x_4<0, x_7>0.
\end{equation}
More generally, for any $a>1$ the discriminant of $p$ is positive, so there are three distinct real roots of $p$, depending continuously on $a$; the corresponding values of $x_2$ and $x_4$ also depend continuously on $a$. Since zero and $-\frac{1}{19}$ are not roots of $p$, and $x_2,x_4$ are nonzero by~\eqref{eqn:742114}, we see that the three possibilities~\eqref{eqn:742114:possiblelogsignX} occur for all $a>1$.

Cycling with $\eta$, we obtain for $a<0$ three analogous configurations with $x_7$ positive, and three configurations with $x_4>0$ for $0<a<1$.

Similarly, for \texttt{7431:13a} and \texttt{7431:13b} we compute
\[
X=\left(\frac{1}{11}+x_8,\frac{2}{11},\frac{3}{11}-x_4-x_8,x_4,\frac{2}{11},\frac{4}{11}-x_4-x_8,\frac{1}{11}+x_4,x_8\right)\]
resulting in the system
\begin{equation*}
\begin{cases}
11x_8(1+11x_8)= 11x_3(1+11x_3)(A-1)^2\\
11x_4(1+11x_4)= 11x_3(1+11x_3)A^2\\
11x_3+11x_4+11x_8=3
\end{cases}
\end{equation*}
Again, replacing $A$ with $\eta(A)=\frac1{1-A}$ has the effect of cycling through the variables $x_3,x_4,x_8$.

Assume $A>1$. Proceeding as in the first case, we obtain
\[\begin{cases}
x_3= \frac{44 A x_8-6 A+121 x_8^2-33 x_8+6}{11 (A-1) (11 x_8-4)}\\
11979 A x_8^3+121 (20 - 47 A + 20 A^2)x_8^2-22 (35 - 62
A + 35 A^2)x_8+60 (-1 + A)^2=0\end{cases}\]
By the assumption $A>1$, $p$ has three real roots; the corresponding sign configurations are
\begin{equation*}
x_3>0, x_4>0, x_8>0; \qquad x_3>0, x_4>0, x_8<0; \qquad x_3<0, x_4>0, x_8>0.
\end{equation*}
Cycling with $\eta$, we obtain for $A<0$ three analogous configurations with $x_8$ positive, and three with $x_3>0$ for $0<A<1$. Notice that $\logsign X$ is only in the image of $M_{\Delta,2}$ when all entries are positive or when $x_8<0$.
\end{proof}

\begin{lemma}
\label{lemma:741:6}
For the one-parameter family of Lie algebras
\[
\texttt{741:6} \quad (0,0,0,(a-1)e^{12},ae^{13},e^{23},e^{16}+ e^{25}+ e^{34})\quad a\neq0,1 \]
the signatures of the diagonal metrics satisfying \eqref{eqn:normalizednil4} are listed in Table~\ref{table:FamiliesDim7}.
\end{lemma}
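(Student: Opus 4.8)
The plan is to follow the same method used for the previous families in Lemmas~\ref{lemma:754321:9} and~\ref{lemma:7421:14}: solve the linear condition~\eqref{eq:condKnormalized} to parametrize the vectors $X$ satisfying~\ref{enum:condK}, write out the polynomial constraints~\ref{enum:condP}, and then determine for which values of the parameter $a$ each admissible sign configuration of $X$ occurs. First I would compute $X=e^{M_\Delta}(g)(c^D)^2$ for \texttt{741:6} and solve~\eqref{eq:condKnormalized}; since $\dim\coker M_\Delta$ governs the number of free parameters, I expect $X$ to depend on two parameters (say $x_3$ and $x_8$, or whichever indices the kernel of $\tran M_\Delta$ singles out), with the remaining entries expressed affinely in these and in $\frac1{19}$-type rational constants coming from $b$. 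The structure constants here are $(a-1)$, $a$, and $1$'s, so~\ref{enum:condP} will produce two polynomial equations of the shape $x_i(\tfrac{1}{c}+x_i)=x_j(\tfrac1c+x_j)\,a^2$ and a companion with $(a-1)^2$, exactly paralleling~\eqref{eqn:742114}.

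Next I would exploit the symmetry of the equations. As in Lemma~\ref{lemma:7421:14}, the substitution $\eta(a)=\frac{1}{1-a}$ should cyclically permute the three relevant variables, and since $\eta$ has order three and cycles the intervals $(-\infty,0)$, $(0,1)$, $(1,+\infty)$, it suffices to analyze one interval, say $a>1$, and deduce the rest by cyclicity. Within that interval I would use the linear relation to eliminate one variable, solve for one variable squared in the first polynomial equation, substitute into the second, and reduce to a single cubic $p$ in one variable. A discriminant computation then shows $p$ has three distinct real roots for all $a$ in the interval, and because the excluded values (the ones forbidden by~\ref{enum:condH}) are not roots of $p$, each root yields a genuine vector $X$ satisfying Corollary~\ref{cor:KHLP}. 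Checking the sign pattern at one representative value of $a$ (e.g.\ $a=2$) and invoking continuity of the roots in $a$ then fixes the sign configurations throughout the interval.

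The final step is to translate the sign configurations of $X$ into the signatures $\delta$ listed in Table~\ref{table:FamiliesDim7}, by solving $\logsign X=M_{\Delta,2}\delta$ and reading off which $\delta$ lie in $\ker M_{\Delta,2}$ (for $\mathbf{S}_0$). Using the $\eta$-symmetry, the configurations for $a<0$ and $0<a<1$ follow from those computed for $a>1$.

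The main obstacle I anticipate is the discriminant analysis of the cubic $p$: unlike the constant-coefficient case $\epsilon=1$ in Lemma~\ref{lemma:754321:9}, here $p$ has coefficients depending on $a$, so I must verify that the discriminant stays positive across the entire interval $a>1$ (and, after cycling, on the others) to guarantee three real roots throughout and to rule out any coincidental collision of a root with a forbidden value such as the analogues of $x_4=\frac25$ or $x_7=\frac{8}{19}$. This is a genuine computation rather than a formal argument, and it is the place where the computer-assisted verification underlying Theorem~\ref{thm:nice6} does the real work; I would lean on an explicit symbolic evaluation at a sample point together with a sign analysis of the discriminant as a function of $a$.
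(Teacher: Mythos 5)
Your general framework --- parametrize the solutions of \eqref{eq:condKnormalized}, impose \ref{enum:condP}, and read off sign configurations --- is the same as the paper's, but you have misjudged the structure of this particular family, and the hard step you plan for does not exist. For \texttt{741:6} the solution of \eqref{eq:condKnormalized} is
\[X=\left(x_6,x_5,\tfrac{1}{2}-x_5-x_6,\tfrac{1}{2}-x_5-x_6,x_5,x_6\right),\]
so the six entries of $X$ pair up exactly ($X_1=X_6$, $X_2=X_5$, $X_3=X_4$) and, after introducing the auxiliary variable $x_3=\tfrac12-x_5-x_6$, condition \ref{enum:condP} reads $x_5^2=a^2x_3^2$ and $x_6^2=(a-1)^2x_3^2$: pure squares, with none of the affine shifts $x_i(\tfrac1c+x_i)$ you predict by analogy with \eqref{eqn:742114}. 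These factor into the linear equations $x_5=\pm a x_3$, $x_6=\pm(a-1)x_3$, and together with $x_3+x_5+x_6=\tfrac12$ they yield exactly three explicit solutions, rational in $a$ (the fourth sign choice forces $0=\tfrac12$). There is no cubic, no discriminant to control, no continuity argument, and no sample-point check: the signs of $x_3,x_5,x_6$ are read off directly from the closed-form solutions on each of the intervals $a<0$, $0<a<1$, $a>1$. The analogy with Lemma~\ref{lemma:7421:14} is only that one introduces the symmetric auxiliary variable; the resulting algebra is far simpler, which is precisely why the paper can dispose of this family in a few lines.

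Your plan is not unworkable --- mechanically eliminating a variable and taking a resultant would produce a polynomial that factors completely into the linear pieces above, and your continuity-plus-sample-point argument would then recover the same sign table --- but it is needlessly heavy, and the ``main obstacle'' you flag (an $a$-dependent discriminant of a cubic) is a phantom. More importantly, as written the proposal never actually produces the solutions or the signatures, so it does not yet verify Table~\ref{table:FamiliesDim7}: to finish you must solve the (ultimately linear) system explicitly and match the resulting sign configurations, for each interval of $a$, against Table~\ref{table:signs741:6}.
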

\begin{proof}
We compute
\begin{equation*}
X=\left(x_6,x_5,\frac{1}{2}-x_6-x_5,\frac{1}{2}-x_6-x_5,x_5,x_6\right).
\end{equation*}
As in Lemma~\ref{lemma:7421:14}, we can add a variable to obtain the more symmetric system of equations
\[\begin{cases}
{x_6^{2}}=(a-1)^{2}x_3^{2}\\{x_5^{2}}=a^{2}x_3^{2}\\
x_3+x_5+x_6=\frac{1}{2}
\end{cases}.\]
An easy computation shows that the solutions are
\[\begin{cases}
x_3=\frac{1}{4}\\
x_5=\frac{a}{4}\\
x_6=\frac{1-a}{4}
\end{cases} \vee
\begin{cases}
x_3=\frac{1}{4(1-a)}\\
x_5=\frac{a}{4 (a-1)}\\
x_6=\frac{1}{4}
\end{cases} \vee
\begin{cases}
x_3=\frac{1}{4 a}\\
x_5=\frac{1}{4}\\
x_6=\frac{a-1}{4 a}
\end{cases}.\]
For any values of $a\neq0,1$, it turns out that the possible signs of $x_3,x_5,x_6$ are those listed in Table~\ref{table:signs741:6}; the associated signatures are also collected in Table~\ref{table:FamiliesDim7}
\end{proof}
\begin{table}[thp]
\centering
\caption{Nilsoliton signatures for \texttt{741:6}\label{table:signs741:6}}
\begin{tabular}{C|CCC|C}
\toprule
a & \sign x_3 & \sign x_5 & \sign x_6 & \mathbf{S} \\
\midrule
\multirow{3}{*}{$ a<0$} & + & - & + & \{12357,126,13456,147,234,24567, 367, 5\} \\
 & + & + & + & \{\emptyset,1237,1256,1346,1457,2345,2467,3567\} \\
 & - & + & + & \{12367,125,134,14567,23456,247,357,6\} \\\cmidrule(rl){2-5}
\multirow{3}{*}{$ 0<a<1$} & + & + & + & \{\emptyset,1237,1256,1346,1457,2345,2467,3567\} \\
 & + & - & + & \{12357,126,13456,147,234,24567, 367, 5\} \\
 & + & + & - & \{12347,12456,136,157,235,267,34567,4\} \\\cmidrule(rl){2-5}
\multirow{3}{*}{$1<a$} & + & + & - & \{12347,12456,136,157,235,267,34567,4\} \\
 & - & + & + & \{12367,125,134,14567,23456,247,357,6\} \\
 & + & + & + & \{\emptyset,1237,1256,1346,1457,2345,2467,3567\} \\
\bottomrule
\end{tabular}
\end{table}

Beside the one-parameter families, there are three nice Lie algebras with corank $M_\Delta$ greater than one which must be studied separately:
\begin{lemma}
\label{lemma:75421:4}
For the Lie algebras
\begin{gather*}
\texttt{75421:4}\quad(0,0,e^{12},e^{13},e^{23},e^{15}+e^{24},e^{16}+e^{34})\\
\texttt{74321:12}\quad (0,0,0,-e^{12},e^{14}+e^{23},e^{15}+e^{34},e^{16}+e^{35})\\
\texttt{75432:3}\quad (0,0,e^{12},e^{13},e^{14}+e^{23},e^{15}+e^{24},e^{25}-e^{34})
\end{gather*}
the signatures of the diagonal metrics satisfying~\eqref{eqn:normalizednil4} are listed in Table~\ref{table:Nilsoliton7}.
\end{lemma}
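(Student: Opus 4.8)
The plan is to treat each of the three Lie algebras by the same computational scheme used in Lemmas~\ref{lemma:754321:9}--\ref{lemma:741:6}, applying Corollary~\ref{cor:KHLP} with the normalization $\lambda=-\frac12$. For each algebra I would first write down the root matrix $M_\Delta$ from the given brackets and solve the linear system~\eqref{eq:condKnormalized}, $M_\Delta\tran M_\Delta X=[1]$, obtaining the general solution $X$ as an affine subspace whose direction is $\ker\tran M_\Delta$. Since here $\dim\coker M_\Delta\geq 2$, this parametrization involves two or more free parameters rather than a single one, which is exactly what distinguishes these cases from the one-parameter families treated earlier.

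Second, I would fix a basis $\alpha_1,\dotsc,\alpha_k$ of $\ker\tran M_\Delta$, with $k=\dim\coker M_\Delta$, and write condition~\ref{enum:condP} explicitly as the system $X^{\alpha_i}=c^{2\alpha_i}$, $i=1,\dotsc,k$. After clearing denominators this becomes a system of $k$ homogeneous polynomial equations in the free parameters. Because the structure constants $c$ are now fixed rational numbers (these are isolated algebras, not families depending on a parameter $a$), the system has purely numerical coefficients; in particular, the symmetry arguments $a\mapsto 1-a$ or $a\mapsto\frac1{1-a}$ exploited before are no longer available and do not simplify the bookkeeping.

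Third, I would solve this polynomial system to find all admissible $X$. This is where I expect the main obstacle to lie: with corank greater than one there are several equations in several unknowns, and one cannot collapse the problem to a single quadratic in one variable as in the previous lemmas. The natural route is successive elimination, reducing to a univariate polynomial whose real roots can be enumerated (checking the sign of its discriminant to count them), and then back-substituting to recover the remaining coordinates of $X$; in practice the computer program~\cite{demonblast} carries out the elimination, but one must verify that every real solution is captured and discard those violating~\ref{enum:condH} (i.e. those lying on a coordinate hyperplane, which correspond to forbidden degenerate values of the parameters).

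Finally, for each surviving $X$ I would compute $\logsign X$ and solve the linear $\Z_2$-system $M_{\Delta,2}\delta=\logsign X$ of condition~\ref{enum:condL}, listing every compatible signature $\delta$ and assembling the set $\mathbf S$ recorded in Table~\ref{table:Nilsoliton7}; the subset $\mathbf S_0=\mathbf S\cap\ker M_{\Delta,2}$ singles out the signatures reachable from a Riemannian nilsoliton by the Wick rotation of Proposition~\ref{prop:wick}. The fact that $M_\Delta$ has corank $\geq 2$ also makes this last $\Z_2$-system have a larger solution space, so each valid $X$ typically yields several signatures at once.
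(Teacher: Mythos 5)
Your outline reproduces the paper's general strategy (apply Corollary~\ref{cor:KHLP} with $\lambda=-\frac12$, parametrize $X$ over $\ker\tran M_\Delta$, impose \ref{enum:condP}, then read off signatures from \ref{enum:condL}), but as written it does not prove the lemma: the entire content of this statement is the explicit solution of the three corank-two systems and the resulting lists in Table~\ref{table:Nilsoliton7}, and your step three only gestures at ``successive elimination to a univariate polynomial'' without carrying it out or exhibiting the roots. These three algebras were split off from Theorem~\ref{thm:nice7} precisely because the automated treatment handles a single polynomial equation; deferring the corank-two elimination back to the program, as you suggest, begs the question.

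Moreover, your claim that the symmetry tricks of the earlier lemmas ``are no longer available'' is misleading for these cases and causes you to miss the structure that actually makes the systems tractable by hand. For \texttt{75421:4} the two equations from \ref{enum:condP} are $x_3^2=(\frac25-x_3)x_5$ and $x_7^2=(\frac25-x_7)x_5$, i.e.\ the \emph{same} quadratic in $x_3$ and in $x_7$; equating $\frac{x_3^2}{\frac25-x_3}=\frac{x_7^2}{\frac25-x_7}$ forces either $x_7=x_3$ or $x_7=\frac{2x_3}{5x_3-2}$, and the second branch is excluded because it would give $x_3+x_5+x_7=0$, contradicting the linear relation $x_3+x_5+x_7=\frac35$. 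This collapses the problem to a single quadratic with roots $x_3=\frac15,\frac65$, each yielding four signatures. The algebra \texttt{74321:12} gives the identical system with two coordinates of $X$ interchanged, and for \texttt{75432:3} the two equations decouple entirely (one in $x_6$, one in $x_8$), with the unique admissible solution $x_6=\frac1{15}$, $x_8=\frac15$. Without identifying this variable symmetry (respectively, the decoupling) and ruling out the spurious branch, a generic discriminant count does not by itself certify that all real solutions have been found and that none violates \ref{enum:condH}, so the signature sets in the table remain unverified.
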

\begin{proof}
For \texttt{75421:4}, we compute
\begin{equation}
\label{eqn:X754214}
X=\left(x_7,\frac{2}{5}-x_3,x_3,x_3,\frac35-x_3-x_7,\frac{2}{5}-x_7,x_7\right),
\end{equation}
which determines the system of equations
\begin{equation}
\label{eqn:X754214:system}
\begin{cases}
x_3^2=(\frac25-x_3)x_5\\x_7^2=(\frac25-x_7)x_5\\
x_3+x_5+x_7=\frac35\end{cases}.
\end{equation}
This implies
\[\frac{x_3^2}{\frac25-x_3}=\frac{x_7^2}{\frac25-x_7},\]
giving either $x_7=x_3$ or $x_7=\frac{2 x_3}{-2 + 5 x_3}$. The latter however implies
\[x_3+x_5+x_7=x_3+\frac{x_3^2}{\frac25-x_3}-\frac{\frac25x_3}{\frac25-x_3}=0,\]
which contradicts~\eqref{eqn:X754214:system}. We can therefore assume $x_7=x_3$, reducing~\eqref{eqn:X754214:system} to
\[\frac{x_3^2}{\frac25-x_3}+2x_3=\frac35,\]
which has solutions $x_3=\frac15$, $x_3=\frac65$. The case $x_3=\frac15$ gives the signatures $\{\emptyset,12457,1357,234\}$; the case $x_3=\frac65$ gives $\{125,1345, 237,47\}$.

For $\texttt{74321:12}$,
we have
\[X=\left(\frac25-x_3,x_7,x_3,x_3,\frac35-x_3-x_7,\frac{2}{5}-x_7,x_7\right)\]
which is~\eqref{eqn:X754214} with $x_1$ and $x_2$ interchanged. Thus we get the solutions $x_3=x_7=\frac15$ (signatures $\{\emptyset,1257,146,24567\}$) and $x_3=x_7=\frac65$ (signatures $\{123467,135,237,3456\}$).

For \texttt{75432:3}, we compute
\[X=\left( x_8,\frac{1}{5}+x_6,\frac{2}{5}-x_8,\frac{1}{5}-x_6,\frac{1}{5}-x_6,x_6,\frac{2}{5}-x_8,x_8\right),\]
yielding two independent equations
\[\left(\frac15+x_6\right)x_6=\pm \left(\frac15-x_6\right)^2, \qquad x_8^2=\left(\frac25-x_8\right)^2,
\]
which are only satisfied for $x_6=\frac1{15}$, $x_8=\frac15$; this gives $X$ with all-positive entries and signatures $\{\emptyset,1357\}$.
\end{proof}

\begin{remark}
\label{remark:accorgimenti}
The nice Lie algebras in our tables are based on the classification of~\cite{ContiRossi:Construction} with the following minor differences.

For three families of Lie algebras, namely \texttt{7431:13b}, \texttt{97654321:43} and \texttt{9641:92b}, we had to correct the sign of an entry.

In addition, due to a limitation in the software used to produce the tables, we have not always used the same exact set of parameters as in~\cite{ContiRossi:Construction} for Lie algebras of dimension $8$ and $9$. However, the parameters taken here only differ by an affine transformation.
\end{remark}

\begin{theorem}
\label{thm:nice7}
The $7$-dimensional irreducible nice nilpotent Lie algebras that do not admit a diagonal nilsoliton metric of type Nil4 are exactly those contained in Table~\ref{table:ciccia}. For each of the others, the signatures of diagonal metrics satisfying~\eqref{eqn:normalizednil4} are listed in Table~\ref{table:FamiliesDim7} and Table~\ref{table:Nilsoliton7}.
\end{theorem}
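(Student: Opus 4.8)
The plan is to run the criterion of Corollary~\ref{cor:KHLP} over the complete list of $7$-dimensional irreducible nice nilpotent Lie algebras provided by~\cite{ContiRossi:Construction}, using the normalization $\lambda=-\frac12$ so that condition~\ref{enum:condK} becomes the linear system~\eqref{eq:condKnormalized} and its solution set is the affine space $b+\ker\tran M_\Delta$, of dimension equal to the corank of $M_\Delta$. For each algebra I would first compute the diagonal Nikolayevsky derivation $N=v^D$ via Proposition~\ref{prop:payne}. Since a Nil4 metric forces $D=-\lambda N\neq0$ by Theorem~\ref{thm:Xequalsb}, every algebra with $N=0$ admits no such metric and is placed immediately in Table~\ref{table:ciccia}. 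For the remaining algebras I would solve~\eqref{eq:condKnormalized} and then test the three surviving conditions~\ref{enum:condH}, \ref{enum:condL}, \ref{enum:condP}.

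In the generic corank-zero situation, condition~\ref{enum:condP} is vacuous and $X$ is uniquely determined by~\ref{enum:condK}; I would then check~\ref{enum:condH} (no vanishing entry) and, if it holds, read off the admissible signatures by solving the $\Z_2$-linear equation $M_{\Delta,2}\delta=\logsign X$ of~\ref{enum:condL}. If~\ref{enum:condH} fails, or if $M_{\Delta,2}\delta=\logsign X$ has no solution (the phenomenon illustrated by~\texttt{85421:4a}), the algebra again goes into Table~\ref{table:ciccia}. When the corank is $1$, condition~\ref{enum:condP} contributes a single polynomial equation in the one free parameter; in the bulk of cases this is of degree $\leq2$ and is dispatched mechanically, exactly as in Example~\ref{example:lorentziano} and in the proof of Theorem~\ref{thm:nice6}, using the computer program~\cite{demonblast}. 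For each surviving $X$ I would assemble $\mathbf{S}$ and its subset $\mathbf{S}_0=\mathbf{S}\cap\ker M_{\Delta,2}$.

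The genuine difficulty lies in the cases where~\ref{enum:condP} cannot be reduced to a one-variable degree-$\leq2$ computation. These are precisely the five one-parameter families~\texttt{754321:9}, \texttt{7421:14}, \texttt{7431:13a}, \texttt{7431:13b}, \texttt{741:6} and the three higher-corank algebras~\texttt{75421:4}, \texttt{74321:12}, \texttt{75432:3}, which I would isolate and settle by invoking Lemmas~\ref{lemma:754321:9}--\ref{lemma:75421:4} proved above; there the polynomial system couples the structure-constant parameter with the entries of $X$ and must be handled by eliminating variables down to a cubic, exploiting the order-two or order-three symmetry of the equations, and tracking the sign of $X$ across the intervals cut out by the discriminant. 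Once these exceptional cases are settled, the theorem follows by collecting outputs: the algebras with $N=0$ together with those for which~\ref{enum:condH}, \ref{enum:condL} or~\ref{enum:condP} has no admissible solution form Table~\ref{table:ciccia}, while the computed sets $\mathbf{S}$ for the remaining algebras populate Tables~\ref{table:FamiliesDim7} and~\ref{table:Nilsoliton7}. The main obstacle is therefore not a single hard argument but verifying completeness of the exceptional list, i.e. confirming that every algebra outside the families and the three special cases really does reduce to a linear or quadratic computation; this is where the systematic computer-assisted enumeration of~\cite{demonblast} is indispensable.
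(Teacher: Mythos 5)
Your proposal matches the paper's proof in both strategy and structure: a case-by-case application of Corollary~\ref{cor:KHLP} with $\lambda=-\tfrac12$, automated via~\cite{demonblast} for the isolated low-corank algebras, with the five one-parameter families and the three corank-two algebras delegated to Lemmas~\ref{lemma:754321:9}, \ref{lemma:7421:14}, \ref{lemma:741:6} and~\ref{lemma:75421:4}, and the failures of~\ref{enum:condH}, \ref{enum:condL} or~\ref{enum:condP} collected into Table~\ref{table:ciccia}. This is essentially the same argument as the paper's.
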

\begin{proof}
Similar as Theorem~\ref{thm:nice6}, with two differences. In some cases, listed in Table~\ref{table:ciccia}, there is no vector $X$ satisfying the conditions of Corollary~\ref{cor:KHLP}; the last column of the table indicates which of the four conditions fails in each case.

In addition, there are five one-parameter familes of nice nilpotent Lie algebras in dimension $7$. For each of them, \ref{enum:condP} entails two or three polynomial equations depending on a parameter; a case-by-case analysis is required to compute the possible signatures (see Lemmas~\ref{lemma:754321:9}, \ref{lemma:7421:14} and~\ref{lemma:741:6}).
Among the isolated nice nilpotent Lie algebras (i.e. outside the above-mentioned families), there are three nice nilpotent Lie algebras of dimension $7$ with $\dim\coker M_\Delta=2$; \ref{enum:condP} gives then a system of two polynomial equations of degree two which is solved explicitly in Lemma~\ref{lemma:75421:4}.

The other cases can be handled automatically using~\cite{demonblast} as in Theorem~\ref{thm:nice6}.
\end{proof}

\begin{remark}
\label{remark:notnecessarilydiagonal}
It is known (see~\cite{Nikolayevsky}) that a nice nilpotent Lie algebra admits a Riemannian nilsoliton metric if and only if it admits a diagonal Riemannian nilsoliton metric.

As a consequence of the classification, we see that this does not hold for arbitrary signature $(p,q)$. Indeed, the nice Lie algebras \texttt{731:21} and \texttt{731:24b} are isomorphic as Lie algebras, but the signatures of the diagonal nilsoliton metrics that they carry are different. In this example, there is no Riemannian nilsoliton metric.

Another example is the Lie algebra with two nice bases corresponding to \texttt{731:19} and \texttt{731:22a}; in this case there is a Riemannian nilsoliton metric, but only \texttt{731:19} admits indefinite diagonal nilsoliton metrics.
\end{remark}

\begin{remark}
It is evident from Table~\ref{table:Nilsoliton7} that a
nice nilpotent Lie algebra may admit an indefinite nilsoliton metric even if it is not a Riemannian nilsoliton. In particular, we see that \texttt{74321:7} and \texttt{7421:9} admit a Lorentzian nilsoliton metric but not a Riemannian nilsoliton metric.
\end{remark}

\begin{footnotesize}
{\setlength{\tabcolsep}{2pt}
\begin{longtable}[c]{>{\ttfamily}c  C C }
\caption{Signatures of diagonal nilsoliton metrics of type Nil4 on families of nice nilpotent Lie algebras of dimension $7$
\label{table:FamiliesDim7}}\\
\toprule
\textnormal{Name} & \g  & \mathbf{S} \\
\midrule
\endfirsthead
\multicolumn{3}{c}{\tablename\ \thetable\ -- \textit{Continued from previous page}} \\
\toprule
\textnormal{Name} & \g  & \mathbf{S} \\
\midrule
\endhead
\bottomrule\\[-7pt]
\multicolumn{3}{c}{\tablename\ \thetable\ -- \textit{Continued to next page}} \\
\endfoot
\bottomrule\\[-7pt]
\endlastfoot
754321:9 & \begin{array}{c}
0,0,(1-a) e^{12},\\
e^{13},a e^{14}+e^{23},\\
e^{15}+e^{24},e^{16}+e^{25}+e^{34}
\end{array} &
\begin{array}{rl}
a<\frac12-\beta & \{\emptyset,12457,1345,1357,234,47\}\\
\frac12-\beta \leq a<0& \{\emptyset,12457,125,1345,1357,234,237,47\}\\
0<a<\frac12 & \{\emptyset,12457,125,146,1345,\\
&1357,234,237,34567,47\}\\
a=\frac12 & \{\emptyset,12457,125,1357,146,234,237,34567\}\\
\frac12<a<1 & \{\emptyset,123467,12457,125,1357,\\
&146,234,237,2456,34567\}\\
1<a\leq\frac12+\beta & \{\emptyset,123467,125,1357,146,237,2456,34567\}\\
\frac12+\beta<a & \{\emptyset,123467,1357,146,2456,34567\}\\
\end{array}\\
&&\alpha=\frac{1}{16} (123 \sqrt{41}-767),\quad \beta = \frac{1}{8} \sqrt{123 \sqrt{41}-767}\\\cmidrule(lr){3-3}
\texttt{7431:13a} & \begin{array}{c}
0,0,0,(A-1)e^{12},e^{14} +e^{23},\\
Ae^{13}+e^{24} ,e^{15}+e^{26} +e^{34}\end{array} & \begin{array}{rl}
A<0 & \{\emptyset,1256,1467,2457\}\\
A>0,A\neq 1 & \{\emptyset,12347,1256,135,1467,2457,236,34567\}\\
\end{array}\\\cmidrule(lr){3-3}
7431:13b & \begin{array}{c}
0,0,0,(A-1)e^{12},e^{14} -e^{23},\\
Ae^{13}+e^{24} ,e^{15}+e^{26} +e^{34}
\end{array}  & \begin{array}{rl}
A<0 & \{\emptyset,1256,1467,2457\}\\
A>0,A\neq 1 & \{\emptyset,12347,1256,135,1467,236,2457,34567\}\\
\end{array}\\\cmidrule(lr){3-3}
7421:14 & \begin{array}{c}
0,0,0, (a-1) e^{12}, a e^{13},\\
e^{14}+e^{23},e^{16}+e^{25}+e^{34}
\end{array}
& \begin{array}{rl}
a<0 & \{
\emptyset,12367,1256,126,134,1457,\\
&147,23456,24567,2467,357,5\}\\
0<a<1 &
\{\emptyset,12367,1256,126,134,1457,\\
&147,23456,24567,2467,357,5\}\\
1<a & \{
\emptyset,12347,12367,1256,134,136, \\
&1457,23456,235,2467,34567,357\}
\end{array}\\\cmidrule(lr){3-3}
741:6 & \begin{array}{c}
0,0,0,(a-1)e^{12},ae^{13},\\
e^{23},e^{16}+ e^{25}+ e^{34}
\end{array}  & \begin{array}{rl}
 a<0 & \{\emptyset,12357,12367,1237, 126,125,1256,134,\\
& 13456,1346,14567,1457,147,234,2345,23456,\\
& 24567,2467,247,3567,357,367,5,6\} \\
0<a<1 & \{ \emptyset,12347,1235,12357,12456,1256,126,13456,\\
& 1346,136,1457,147,157,234,2345,235,\\
& 24567,2467,267,34567,3567,367,4,5\}\\
1<a & \{\emptyset,12347,12367,1237,12456,125,1256,134,\\
& 1346,136,14567,1457,157,2345,23456,235,\\
& 2467,247,267,34567,3567,357,4,6\} \\
\end{array}\\
\end{longtable}
}
\end{footnotesize}

{\setlength{\tabcolsep}{2pt}
\begin{longtable}[c]{>{\ttfamily}c C C c}
\caption{Nice nilpotent Lie algebras of dimension $7$ that do not admit a diagonal nilsoliton metric of type Nil4\label{table:ciccia}}\\
\toprule
\textnormal{Name} & \g &   N & Obstruction \\
\midrule
\endfirsthead
\multicolumn{4}{c}{\tablename\ \thetable\ -- \textit{Continued from previous page}} \\
\toprule
\textnormal{Name} & \g &   N & Obstruction \\
\midrule
\endhead
\bottomrule\\[-7pt]
\multicolumn{4}{c}{\tablename\ \thetable\ -- \textit{Continued to next page}} \\
\endfoot
\bottomrule\\[-7pt]
\endlastfoot
754321:5&0,0,- e^{12},e^{13},e^{14},e^{15},e^{34}+e^{16}+e^{25}&1/5(1,2,3,4,5,6,7)&\ref{enum:condH}\\
754321:6&0,0,e^{12},e^{13},e^{14}+e^{23},e^{15}+e^{24},e^{16}+e^{34}&1/5(1,2,3,4,5,6,7)&\ref{enum:condH}\\
754321:7&0,0,e^{12},e^{13},e^{14}+e^{23},e^{15}+e^{24},e^{25}+e^{16}&1/5(1,2,3,4,5,6,7)&\ref{enum:condH}\\
75421:6&0,0,- e^{12},e^{13},e^{23},e^{24}+e^{15},e^{35}+e^{26}+e^{14}&1/5(2,1,3,5,4,6,7)&\ref{enum:condH}\\
74321:11&0,0,0,- e^{12},e^{14},e^{24}+e^{15},e^{45}+e^{26}+e^{13}&1/5(1,2,6,3,4,5,7)&\ref{enum:condH}\\
74321:15&0,0,0,- e^{12},e^{14}+e^{23},e^{15}+e^{34},e^{16}+e^{35}+e^{24}&1/5(1,3,2,4,5,6,7)&\ref{enum:condH}\\
7431:2&0,0,0,e^{12},e^{14},e^{13}+e^{24},e^{15}&1/4(1,2,4,3,4,5,5)&\ref{enum:condH}\\
7431:6a&0,0,0,- e^{12},e^{14}+e^{23},e^{13}+e^{24},e^{34}+e^{15}&4/11(1,1,2,2,3,3,4)&\ref{enum:condP}\\
7431:6b&0,0,0,- e^{12},e^{23}-e^{14},e^{24}+e^{13},e^{34}+e^{15}&4/11(1,1,2,2,3,3,4)&\ref{enum:condP}\\
741:3a&0,0,0,e^{12},e^{13},e^{23},e^{24}+e^{35}&1/2(1,1,1,2,2,2,3)&\ref{enum:condH}\\
741:3b&0,0,0,- e^{12},e^{13},e^{23},e^{24}+e^{35}&1/2(1,1,1,2,2,2,3)&\ref{enum:condH}\\
741:4&0,0,0,e^{12},e^{13},e^{23},e^{34}+e^{25}&1/2(1,1,1,2,2,2,3)&\ref{enum:condH}\\
731:8&0,0,0,0,e^{12},e^{34},e^{13}+e^{25}&1/3(2,1,2,2,3,4,4)&\ref{enum:condH}\\
731:16a&0,0,0,0,e^{12},e^{13},e^{25}+e^{36}+e^{14}&1/2(1,1,1,2,2,2,3)&\ref{enum:condH}\\
731:16b&0,0,0,0,- e^{12},e^{13},e^{36}+e^{14}+e^{25}&1/2(1,1,1,2,2,2,3)&\ref{enum:condH}\\
731:18&0,0,0,0,e^{12},e^{13},e^{35}+e^{26}+e^{14}&1/2(1,1,1,2,2,2,3)&\ref{enum:condH}
\end{longtable}
}

\FloatBarrier

\begin{landscape}
{\setlength{\tabcolsep}{2pt}
\begin{longtable}[c]{>{\ttfamily}c C C C}
\caption{Irreducible nice nilpotent Lie algebras of dimension $7$ that admit a diagonal nilsoliton metric of type Nil4\label{table:Nilsoliton7}}\\
\toprule
\textnormal{Name} & \g &   N &  \mathbf{S} \\
\midrule
\endfirsthead
\multicolumn{4}{c}{\tablename\ \thetable\ -- \textit{Continued from previous page}} \\
\toprule
\textnormal{Name} & \g &   N &  \mathbf{S} \\
\midrule
\endhead
\bottomrule\\[-7pt]
\multicolumn{4}{c}{\tablename\ \thetable\ -- \textit{Continued to next page}} \\
\endfoot
\bottomrule\\[-7pt]
\endlastfoot
754321:1&0,0,e^{12},e^{13},e^{14},e^{15},e^{16}&2/37(1,16,17,18,19,20,21)&\{\emptyset,1246,1357,234567\}\\
754321:2&0,0,e^{12},e^{13},e^{14},e^{15},e^{23}+e^{16}&5/34(1,4,5,6,7,8,9)&\{\emptyset,1357\}\\
754321:3&0,0,e^{12},e^{13},e^{14},e^{23}+e^{15},e^{24}+e^{16}&17/100(1,3,4,5,6,7,8)&\{\emptyset,1246\}\\
754321:9&0,0,- e^{12} {(-1+\lambda)},e^{13},e^{23}+ e^{14} \lambda,e^{15}+e^{24},e^{25}+e^{16}+e^{34}&1/5(1,2,3,4,5,6,7)&\text{see Table~\ref{table:FamiliesDim7}}\\
75432:1&0,0,- e^{12},e^{13},e^{14},e^{15},e^{34}+e^{25}&7/52(1,4,5,6,7,8,11)&\{\emptyset,12467,1357,23456\}\\
75432:2&0,0,- e^{12},e^{13},e^{14},e^{15}+e^{23},e^{34}+e^{25}&5/31(1,3,4,5,6,7,9)&\{\emptyset,12467\}\\
75432:3&0,0,- e^{12},e^{13},e^{14}+e^{23},e^{24}+e^{15},e^{34}+e^{25}&1/5(1,2,3,4,5,6,7)&\{\emptyset,1357\}\\
75421:1&0,0,e^{12},e^{13},e^{23},e^{14},e^{16}&1/18(3,10,13,16,23,19,22)&\{\emptyset,12457,1356,23467\}\\
75421:2&0,0,e^{12},e^{13},e^{23},e^{14},e^{16}+e^{25}&45/353(2,3,5,7,8,9,11)&\{\emptyset,23467\}\\
75421:3&0,0,- e^{12},e^{13},e^{23},e^{14},e^{26}+e^{34}&1/52(10,23,33,43,56,53,76)&\{\emptyset,12457,13567,2346\}\\
75421:4&0,0,e^{12},e^{13},e^{23},e^{15}+e^{24},e^{16}+e^{34}&1/5(1,2,3,4,5,6,7)&\{\emptyset,12457,125,1345,1357,234, 237,47\}\\
75421:5a&0,0,- e^{12},e^{13},e^{23},e^{25}+e^{14},e^{26}+e^{34}&19/65(1,1,2,3,3,4,5)&\{\emptyset,12457,13567,2346\}\\
75421:5b&0,0,- e^{12},- e^{13},e^{23},e^{14}+e^{25},e^{26}+e^{34}&19/65(1,1,2,3,3,4,5)&\{\emptyset,12457,13567,2346\}\\
7542:1&0,0,e^{12},e^{13},e^{23},e^{14},e^{25}&9/28(1,1,2,3,3,4,4)&\{\emptyset,1245,13567,23467\}\\
7542:2&0,0,e^{12},e^{13},e^{23},e^{14},e^{15}+e^{24}&1/12(3,5,8,11,13,14,16)&\{\emptyset,1245,1356,2346\}\\
7542:3a&0,0,e^{12},e^{13},e^{23},e^{24}+e^{15},e^{14}+e^{25}&9/28(1,1,2,3,3,4,4)&\{\emptyset,1245,1357,2347\}\\
7542:3b&0,0,e^{12},- e^{13},e^{23},-e^{15}+e^{24},e^{14}+e^{25}&9/28(1,1,2,3,3,4,4)&\{\emptyset,1245,1357,2347\}\\
74321:2&0,0,0,e^{12},e^{14},e^{15},e^{16}+e^{23}&1/27(5,17,20,22,27,32,37)&\{\emptyset,1257,146,24567\}\\
74321:5&0,0,0,- e^{12},e^{14},e^{15}+e^{23},e^{16}+e^{34}&1/5(1,3,3,4,5,6,7)&\{\emptyset,12357,1346,24567\}\\
74321:6&0,0,0,- e^{12},e^{14},e^{15}+e^{23},e^{45}+e^{26}&1/60(16,21,48,37,53,69,90)&\{\emptyset,12357,13467,2456\}\\
74321:7&0,0,0,- e^{12},e^{14},e^{15},e^{26}+e^{13}+e^{45}&1/5(1,2,6,3,4,5,7)&\{12357,13467,23456,3\}\\
74321:10&0,0,0,- e^{12},e^{14},e^{23}+e^{15},e^{13}+e^{45}+e^{26}&8/47(2,1,6,3,5,7,8)&\{12357,13467\}\\
74321:12&0,0,0,- e^{12},e^{23}+e^{14},e^{15}+e^{34},e^{16}+e^{35}&1/5(1,3,2,4,5,6,7)&\{\emptyset,123467,1257,135,146,237,24567,3456\}\\
7431:3&0,0,0,e^{12},e^{14},e^{24},e^{23}+e^{15}&1/68(20,31,60,51,71,82,91)&\{\emptyset,12356,13467,2457\}\\
7431:4&0,0,0,e^{12},e^{14},e^{24}+e^{13},e^{15}+e^{23}&10/67(2,3,6,5,7,8,9)&\{12356,13467\}\\
7431:5&0,0,0,e^{12},e^{13}+e^{24},e^{14},e^{34}+e^{25}&1/12(5,4,8,9,13,14,17)&\{\emptyset,1256,1457,2467\}\\
7431:9&0,0,0,e^{12},e^{14},e^{24}+e^{13},e^{16}+e^{25}&1/7(2,3,6,5,7,8,10)&\{\emptyset,1256,146,245\}\\
7431:10a&0,0,0,e^{12},e^{14}+e^{23},e^{24}+e^{13},e^{15}+e^{26}&4/11(1,1,2,2,3,3,4)&\{\emptyset,1256,1467,2457\}\\
7431:10b&0,0,0,e^{12},-e^{14}+e^{23},e^{13}+e^{24},e^{26}+e^{15}&4/11(1,1,2,2,3,3,4)&\{\emptyset,1256,1467,2457\}\\
7431:11a&0,0,0,e^{12},e^{13}+e^{24},e^{14}+e^{23},e^{15}+e^{26}&4/11(1,1,2,2,3,3,4)&\{\emptyset,1256,145,246\}\\
7431:11b&0,0,0,e^{12},e^{24}-e^{13},e^{23}+e^{14},e^{15}+e^{26}&4/11(1,1,2,2,3,3,4)&\{\emptyset,1256,145,246\}\\
7431:12a&0,0,0,e^{12},e^{14},e^{13}+e^{24},e^{26}+e^{34}+e^{15}&4/11(1,1,2,2,3,3,4)&\{\emptyset,1256,1467,2457\}\\
7431:12b&0,0,0,e^{12},- e^{14},e^{13}+e^{24},e^{26}+e^{34}+e^{15}&4/11(1,1,2,2,3,3,4)&\{\emptyset,1256,1467,2457\}\\
7431:13a&0,0,0, {(-1+A)} e^{12},e^{23}+e^{14}, A e^{13}+e^{24},e^{34}+e^{26}+e^{15}&4/11(1,1,2,2,3,3,4)&\text{see Table~\ref{table:FamiliesDim7}}\\
7431:13b&0,0,0, {(-1+A)} e^{12},e^{23}-e^{14}, A e^{13}+e^{24},e^{34}+e^{15}+e^{26}&4/11(1,1,2,2,3,3,4)&\text{see Table~\ref{table:FamiliesDim7}}\\
7421:1&0,0,0,e^{12},e^{13},e^{14},e^{16}&2/13(1,5,6,6,7,7,8)&\{\emptyset,1236,1256,1347,1457,234567,2467,35\}\\
7421:2&0,0,0,e^{12},e^{13},e^{24},e^{26}&1/21(12,5,15,17,27,22,27)&\{\emptyset,1236,1256,13467,14567,23457,247,35\}\\
7421:3&0,0,0,e^{12},e^{13},e^{14},e^{16}+e^{23}&1/31(8,19,24,27,32,35,43)&\{\emptyset,1236,1347,2467\}\\
7421:4&0,0,0,e^{12},e^{13},e^{14},e^{16}+e^{24}&2/37(5,10,16,15,21,20,25)&\{\emptyset,1347,1457,35\}\\
7421:5&0,0,0,e^{12},e^{13},e^{24},e^{26}+e^{14}&1/19(10,5,14,15,24,20,25)&\{\emptyset,23457,247,35\}\\
7421:6&0,0,0,e^{12},e^{13},e^{14},e^{35}+e^{16}&2/27(3,10,8,13,11,16,19)&\{\emptyset,1347,1457,35\}\\
7421:7&0,0,0,e^{12},e^{13},e^{24},e^{35}+e^{26}&1/33(18,10,15,28,33,38,48)&\{\emptyset,13467,14567,35\}\\
7421:8&0,0,0,e^{12},e^{13},e^{24},e^{15}+e^{26}&1/39(15,14,27,29,42,43,57)&\{\emptyset,1256,13467,23457\}\\
7421:9&0,0,0,- e^{12},e^{13},e^{14}+e^{23},e^{16}+e^{34}&2/19(3,5,6,8,9,11,14)&\{126,147,24567,5\}\\
7421:10&0,0,0,e^{12},e^{13},e^{14}+e^{23},e^{25}+e^{16}&2/19(3,5,6,8,9,11,14)&\{\emptyset,1256,1457,2467\}\\
7421:11a&0,0,0,e^{12},e^{13},e^{14},e^{16}+e^{24}+e^{35}&5/17(1,2,2,3,3,4,5)&\{\emptyset,1347,1457,35\}\\
7421:11b&0,0,0,- e^{12},e^{13},e^{14},e^{35}+e^{16}+e^{24}&5/17(1,2,2,3,3,4,5)&\{\emptyset,1347,1457,35\}\\
7421:12&0,0,0,e^{12},e^{13},e^{14},e^{34}+e^{25}+e^{16}&2/19(3,5,6,8,9,11,14)&\{\emptyset,1256,1457,2467\}\\
7421:13&0,0,0,e^{12},e^{13},e^{24},e^{14}+e^{35}+e^{26}&20/139(4,2,3,6,7,8,10)&\{13467,14567\}\\
7421:14&0,0,0, e^{12} {(-1+\lambda)}, e^{13} \lambda,e^{23}+e^{14},e^{25}+e^{34}+e^{16}&2/19(3,5,6,8,9,11,14)&\text{see Table~\ref{table:FamiliesDim7}}\\
742:1&0,0,0,e^{12},e^{13},e^{14},e^{24}&1/11(4,5,9,9,13,13,14)&\{\emptyset,12367,12567,1347,1457,23456,246,35\}\\
742:2&0,0,0,e^{12},e^{13},e^{24},e^{14}+e^{23}&1/23(9,10,18,19,27,29,28)&\{\emptyset,12567,1456,247\}\\
742:3&0,0,0,e^{12},e^{13},e^{14},e^{15}&1/5(1,4,4,5,5,6,6)&\{\emptyset,12367,1256,1347,145,234567,246,357\}\\
742:4&0,0,0,e^{12},e^{13},e^{24},e^{15}&1/21(6,11,15,17,21,28,27)&\{\emptyset,12367,1256,13467,1456,23457,24,357\}\\
742:5&0,0,0,e^{12},e^{13},e^{24},e^{35}&1/12(6,5,5,11,11,16,16)&\{\emptyset,12367,12567,13467,14567,2345,24,35\}\\
742:6&0,0,0,e^{12},e^{13},e^{14},e^{15}+e^{23}&1/16(5,10,11,15,16,20,21)&\{\emptyset,1347,145,357\}\\
742:7&0,0,0,e^{12},e^{13},e^{24},e^{23}+e^{15}&1/41(11,22,30,33,41,55,52)&\{12367,1256,23457,24\}\\
742:8&0,0,0,e^{12},e^{13},e^{14},e^{24}+e^{15}&1/37(11,20,29,31,40,42,51)&\{\emptyset,12367,1347,246\}\\
742:9a&0,0,0,e^{12},e^{13},e^{14},e^{24}+e^{35}&1/9(3,5,5,8,8,11,13)&\{\emptyset,12367,12567,1347,1457,23456,246,35\}\\
742:9b&0,0,0,- e^{12},e^{13},e^{14},e^{35}+e^{24}&1/9(3,5,5,8,8,11,13)&\{\emptyset,12367,12567,1347,1457,23456,246,35\}\\
742:10&0,0,0,e^{12},e^{13},e^{14},e^{25}+e^{34}&1/20(7,10,12,17,19,24,29)&\{\emptyset,12367,1256,134,1457,23456,2467,357\}\\
742:11&0,0,0,e^{12},e^{13},e^{24},e^{14}+e^{35}&1/47(22,20,21,42,43,62,64)&\{\emptyset,12367,12567,35\}\\
742:12&0,0,0,e^{12},e^{13},e^{24},e^{34}+e^{25}&1/20(10,7,11,17,21,24,28)&\{\emptyset,12367,1256,1346,14567,2345,247,357\}\\
742:13&0,0,0,e^{12},e^{13},e^{23}+e^{14},e^{24}+e^{15}&5/29(2,3,4,5,6,7,8)&\{\emptyset,246\}\\
742:14a&0,0,0,e^{12},e^{13},e^{14}+e^{23},e^{24}+e^{35}&5/17(1,2,2,3,3,4,5)&\{12367,1347,23456,35\}\\
742:14b&0,0,0,- e^{12},e^{13},e^{14}+e^{23},e^{24}+e^{35}&5/17(1,2,2,3,3,4,5)&\{12367,1347,23456,35\}\\
742:15&0,0,0,e^{12},e^{13},e^{23}+e^{14},e^{34}+e^{25}&2/19(3,5,6,8,9,11,14)&\{12367,134,23456,357\}\\
742:16&0,0,0,e^{12},e^{13},e^{15}+e^{24},e^{14}+e^{35}&13/29(1,1,1,2,2,3,3)&\{\emptyset,12367\}\\
742:17&0,0,0,e^{12},e^{13},e^{15}+e^{24},e^{25}+e^{34}&1/13(5,6,7,11,12,17,18)&\{\emptyset,12367,1346,247\}\\
742:18a&0,0,0,e^{12},e^{13},e^{25}+e^{34},e^{24}+e^{35}&1/12(6,5,5,11,11,16,16)&\{\emptyset,12367,1257,1347,14567,2345,246,356\}\\
742:18b&0,0,0,- e^{12},e^{13},-e^{25}+e^{34},e^{24}+e^{35}&1/12(6,5,5,11,11,16,16)&\{\emptyset,12367,1257,1347,14567,2345,246,356\}\\
741:1&0,0,0,e^{12},e^{13},e^{23},e^{14}&1/13(5,7,9,12,14,16,17)&\{\emptyset,1237,12567,1346,145,23457,2467,356\}\\
741:2&0,0,0,e^{12},e^{13},e^{23},e^{24}+e^{15}&1/37(15,19,23,34,38,42,53)&\{\emptyset,1237,13467,246\}\\
741:5&0,0,0,e^{12},e^{13},e^{23},e^{15}+e^{36}+e^{24}&1/2(1,1,1,2,2,2,3)&\{\emptyset,1237\}\\
741:6&0,0,0, e^{12} {(-1+\lambda)}, e^{13} \lambda,e^{23},e^{25}+e^{16}+e^{34}&1/2(1,1,1,2,2,2,3)&\text{see Table~\ref{table:FamiliesDim7}}\\
7321:2&0,0,0,0,e^{12},e^{15},e^{34}+e^{16}&5/21(1,3,3,3,4,5,6)&\{\emptyset,12346,126,1357,1457,23567,24567,34\}\\
7321:5&0,0,0,0,e^{12},e^{15},e^{34}+e^{25}+e^{16}&4/27(2,4,5,5,6,8,10)&\{\emptyset,1357,1457,34\}\\
7321:7&0,0,0,0,e^{12},e^{13}+e^{25},e^{14}+e^{35}+e^{26}&2/19(5,3,6,9,8,11,14)&\{\emptyset,1246,1567,2457\}\\
732:3&0,0,0,0,e^{12},e^{15},e^{25}+e^{34}&1/21(8,11,15,15,19,27,30)&\{\emptyset,12367,12467,1357,1457,23456,256,34\}\\
732:5&0,0,0,0,e^{12},e^{13}+e^{25},e^{15}+e^{34}&1/41(22,15,30,29,37,52,59)&\{12367,13456,2357,34\}\\
732:6&0,0,0,0,e^{12},e^{15}+e^{23},e^{14}+e^{25}&7/16(1,1,2,2,2,3,3)&\{\emptyset,1267,157,256\}\\
731:5&0,0,0,0,e^{12},e^{13},e^{15}+e^{34}&1/13(5,9,9,10,14,14,19)&\{\emptyset,1237,12467,1345,156,23567,2457,346\}\\
731:6&0,0,0,0,e^{12},e^{13},e^{25}+e^{34}&1/11(6,5,7,9,11,13,16)&\{\emptyset,1237,12467,1357,14567,23456,25,346\}\\
731:7&0,0,0,0,e^{12},e^{24}+e^{13},e^{15}&1/13(5,7,12,10,12,17,17)&\{\emptyset,12347,1267,135,1456,23567,2457,346\}\\
731:9&0,0,0,0,e^{12},e^{13},e^{14}+e^{25}&2/13(3,3,5,6,6,8,9)&\{\emptyset,1237,1267,1357,1567,2356,25,36\}\\
731:10&0,0,0,0,e^{12},e^{13},e^{24}+e^{15}&1/5(2,3,4,4,5,6,7)&\{\emptyset,1237,1267,135,156,23567,257,36\}\\
731:11&0,0,0,0,e^{12},e^{24}+e^{13},e^{34}+e^{15}&1/23(10,13,18,15,23,28,33)&\{\emptyset,23567,2457,346\}\\
731:12&0,0,0,0,e^{12},e^{14}+e^{23},e^{25}+e^{13}&1/14(7,6,12,11,13,18,19)&\{\emptyset,1267,1457,2456\}\\
731:17&0,0,0,0,e^{12},e^{13},e^{36}+e^{24}+e^{15}&4/37(4,6,5,8,10,9,14)&\{\emptyset,1237,1267,36\}\\
731:19&0,0,0,0,e^{12},e^{34},e^{36}+e^{15}&1/12(5,8,5,8,13,13,18)&\{\emptyset,12347,12467,1356,15,23457,24567,36\}\\
731:20&0,0,0,0,- e^{12},e^{23}+e^{14},e^{35}+e^{16}&1/20(7,12,11,16,19,23,30)&\{\emptyset,12347,126,1356,1457,235,24567,3467\}\\
731:21&0,0,0,0,e^{12},e^{34},e^{46}+e^{13}+e^{25}&4/11(2,1,2,1,3,3,4)&\{12347,12367,13457,13567\}\\
731:22a&0,0,0,0,e^{14}+e^{23},e^{24}+e^{13},e^{15}+e^{26}&1/12(5,5,8,8,13,13,18)&\{\emptyset,12347,1256,135,1467,236,2457,34567\}\\
731:22b&0,0,0,0,-e^{14}+e^{23},e^{13}+e^{24},e^{26}+e^{15}&1/12(5,5,8,8,13,13,18)&\{\emptyset,12347,1256,135,1467,236,2457,34567\}\\
731:23&0,0,0,0,e^{12},e^{24}+e^{13},e^{26}+e^{35}+e^{14}&2/19(6,3,5,8,9,11,14)&\{12347,126,135,14567\}\\
731:24a&0,0,0,0,e^{24}+e^{13},-e^{12}+e^{34},e^{15}+e^{23}+e^{46}&4/11(1,2,2,1,3,3,4)&\{12347,125,23567,246\}\\
731:24b&0,0,0,0,-e^{13}+e^{24},e^{34}-e^{12},e^{46}+e^{15}+e^{23}&4/11(1,2,2,1,3,3,4)&\{12347,125,23567,246\}\\
73:2&0,0,0,0,e^{12},e^{13},e^{14}&2/5(1,2,2,2,3,3,3)&
\begin{array}{l}
\{\emptyset,1234,1237,1246,1267,1345,1357,1456,\\
1567,234567,2356,2457,25,3467,36,47\}
\end{array}\\
73:3&0,0,0,0,e^{12},e^{13},e^{24}&1/7(4,4,5,5,8,9,9)&
\begin{array}{l}
\{\emptyset,1234,1237,1246,1267,13457,135,14567,\\
156,23456,23567,245,257,3467,36,47\}
\end{array}\\
73:4&0,0,0,0,e^{12},e^{13},e^{23}+e^{14}&1/6(3,4,4,5,7,7,8)&\{\emptyset,1234,1267,1357,1456,2356,2457,3467\}\\
73:5&0,0,0,0,e^{12},e^{34},e^{24}+e^{13}&5/8(1,1,1,1,2,2,2)&\{\emptyset,1234,127,1356,14567,23567,2456,347\}\\
73:6a&0,0,0,0,e^{12},e^{14}+e^{23},e^{13}+e^{24}&1/7(4,4,5,5,8,9,9)&\{\emptyset,1234,1267,1356,1457,2357,2456,3467\}\\
73:6b&0,0,0,0,e^{12},-e^{14}+e^{23},e^{13}+e^{24}&1/7(4,4,5,5,8,9,9)&\{\emptyset,1234,1267,1356,1457,2357,2456,3467\}\\
73:7a&0,0,0,0,e^{23}+e^{14},e^{24}+e^{13},e^{34}+e^{12}&5/8(1,1,1,1,2,2,2)&\{\emptyset,1234,1256,1357,1467,2367,2457,3456\}\\
73:7b&0,0,0,0,e^{23}+e^{14},-e^{13}+e^{24},e^{34}+e^{12}&5/8(1,1,1,1,2,2,2)&\{\emptyset,1234,1256,1357,1467,2367,2457,3456\}\\
721:4&0,0,0,0,0,e^{12},e^{13}+e^{45}+e^{26}&2/13(4,3,6,5,5,7,10)&\{\emptyset,1247,1257,1467,1567,2456,26,45\}\\
72:5&0,0,0,0,0,e^{12},e^{13}+e^{45}&1/7(4,5,6,5,5,9,10)&
\begin{array}{l}
\{\emptyset,123,12345,1247,1257,13456,136,1467,\\
1567,23467,23567,2456,26,347,357,45\}
\end{array}\\
72:6&0,0,0,0,0,e^{24}+e^{13},e^{12}+e^{35}&1/8(6,5,5,6,6,11,11)&
\{\emptyset,12345,126,137,14567,2367,2457,3456\}\\
71:3&0,0,0,0,0,0,e^{56}+e^{34}+e^{12}&4/5(1,1,1,1,1,1,2)&
\begin{array}{l}
\{\emptyset,12,1234,123456,1256,1357,1367,1457,\\
1467,2357,2367,2457,2467,34,3456,56\}
\end{array}
\end{longtable}
}
\end{landscape}

\FloatBarrier
\section{Nilsolitons of dimension $8$ and $9$}
In higher dimensions, the methods of this paper do not seem sufficient to obtain a complete classification, since the number of polynomial systems that need to be solved increases dramatically. For instance, in dimension $8$ there are $119$ nice Lie algebras and $37$ families of nice Lie algebras such that the corank is greater than one, meaning that \ref{enum:condP} consists of two or more equations. However, the method works if we put restrictions on the corank. Again, we use the classification of~\cite{ContiRossi:Construction} with minor differences (see Remark~\ref{remark:accorgimenti}).
\begin{proposition}
\label{prop:8}
The irreducible nice nilpotent Lie algebras of dimension $8$ with $\dim \coker M_\Delta\leq 1$ that admit a diagonal nilsoliton metric of type Nil4 are listed in Table~\ref{table:Nilsoliton8Families} and Table~\tableottocoker\ (see ancillary files); for each Lie algebra, the last column gives the set of signatures of diagonal metrics satisfying~\eqref{eqn:normalizednil4}.
\end{proposition}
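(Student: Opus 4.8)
The plan is to mirror the proofs of Theorem~\ref{thm:nice6} and Theorem~\ref{thm:nice7}, applying Corollary~\ref{cor:KHLP} to each irreducible nice nilpotent Lie algebra of dimension $8$ with $\dim\coker M_\Delta\leq 1$ taken from the classification of~\cite{ContiRossi:Construction}. First I would discard the algebras whose diagonal Nikolayevsky derivation vanishes, since by Theorem~\ref{thm:Xequalsb} these carry no nilsoliton of type Nil4; for the rest, I would compute the root matrix $M_\Delta$ and the diagonal Nikolayevsky derivation $N$, normalize $\lambda=-\frac12$, and solve the linear condition~\eqref{eq:condKnormalized}, namely $M_\Delta\tran{M_\Delta}X=[1]$, to obtain the affine space of candidate vectors $X$.

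The corank assumption is precisely what makes the classification feasible. When $\dim\coker M_\Delta=0$ the kernel of $\tran{M_\Delta}$ is trivial, so~\ref{enum:condK} pins down $X$ uniquely and~\ref{enum:condP} is vacuous; here one merely verifies~\ref{enum:condH}, and then reads off the admissible signatures as the solutions $\delta$ of $M_{\Delta,2}\delta=\logsign X$ demanded by~\ref{enum:condL}. When $\dim\coker M_\Delta=1$, fixing a generator $\alpha$ of $\ker\tran{M_\Delta}$ lets me write $X=X_0+t\alpha$ for a single parameter $t$, so that~\ref{enum:condP} becomes the \emph{single} polynomial equation $X^\alpha=c^{2\alpha}$ in $t$; I would clear denominators, solve it, discard the roots lying on a coordinate hyperplane (those violating~\ref{enum:condH}), and for each surviving $X$ compute $\logsign X$ and the corresponding signatures as above. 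This is exactly the situation that the program~\cite{demonblast} handles automatically, so the isolated (non-parametrized) algebras can be disposed of by machine computation, producing the entries of Table~\tableottocoker.

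The genuinely delicate part, and the main obstacle, is the one-parameter \emph{families} of nice Lie algebras, whose structure constants $c$ depend on a continuous parameter; these must be treated by hand exactly as in Lemmas~\ref{lemma:754321:9}, \ref{lemma:7421:14} and~\ref{lemma:741:6}. For such a family the single equation of~\ref{enum:condP} has coefficients depending on the parameter, so its real roots---and, more importantly, the sign pattern of the resulting entries of $X$, which via~\ref{enum:condL} determines $\mathbf{S}$---may change as the parameter varies. I would therefore compute the discriminant of this equation to locate the parameter values at which the number of real roots changes, exploit any symmetry of the defining equations (of the kind $a\mapsto 1-a$ encountered in dimension $7$) to halve the casework, and argue by continuity of the roots and of the entries of $X$ that the sign configuration is constant on each interval between consecutive critical values. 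Verifying~\ref{enum:condH} then reduces to excluding finitely many exceptional parameter values. Collecting the resulting signatures over all such families yields Table~\ref{table:Nilsoliton8Families}, completing the classification.
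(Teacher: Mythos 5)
Your proposal follows essentially the same route as the paper: discard algebras with vanishing Nikolayevsky derivation, run Corollary~\ref{cor:KHLP} case by case with the isolated algebras handled by machine (corank $0$ giving a unique $X$ with~\ref{enum:condP} vacuous, corank $1$ giving a single polynomial equation), and treat the one-parameter families by hand via discriminants, symmetry and continuity of the sign pattern, exactly as in the dimension-$7$ lemmas. The only cosmetic difference is that the paper dispatches three of the eight families by shortcuts (one fails~\ref{enum:condH} outright, two are settled by Corollary~\ref{cor:positivesignature} together with~\ref{enum:condL}) rather than solving their parametric polynomial, but your more uniform treatment would reach the same conclusions.
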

\begin{proof}
Because of the Nil4 condition, we restrict to nice Lie algebras with nonzero Nikolaevsky derivation.

We apply Corollary~\ref{cor:KHLP} case by case. Lie algebras that do not depend on a parameter can be handled by the program~\cite{demonblast}, since Condition~\ref{enum:condP} is a single polynomial equation.

In addition, there are eight one-parameter families to consider. For \texttt{852:26}
there is no metric by \ref{enum:condH}. For
\texttt{852:23} and \texttt{842:41}
we have the metrics guaranteed by Corollary~\ref{cor:positivesignature}, and no others because of \ref{enum:condL}.

Then we have five one-parameter families for which Condition~\ref{enum:condP} is a
polynomial condition depending on one parameter.
For \texttt{8531:46},
we compute
\[X=\left(\frac{3}{19},-\frac{1}{19}+x_7,\frac{7}{19}-x_7,-\frac{1}{19}+x_7,\frac{6}{19}-x_7,\frac{6}{19}-x_7,x_7\right).\]
Then
\[x_7\left(x_7-\frac1{19}\right)^2=a^2 \left(\frac{6}{19}-x_7\right)^2\left(\frac{7}{19}-x_7\right).\]
We set $A=a^2$ and rewrite the equation as
\[p=-\frac{252 A}{6859} + \left(\frac{1}{361} + \frac{120 A}{361}\right) x  -\left(\frac{2}{19} +A\right) x^2 + (1 + A) x^3=0.\]
This equation is invariant under
\[A\mapsto \frac1A, \quad x_7\mapsto \frac7{19}-x_7.\]
Therefore, we may assume $A\geq 1$. The discriminant of $p$ is negative for
\[1\leq A \leq \alpha=\frac1{48}(34343 + 323 \sqrt{11305})\sim 1431.0.\]
Solving for two fixed values of $A$, say $A=1$ and $A=1431$, and using continuity, we see that a solution with all entries of $X$ positive exists for every $A\geq 1$, and in addition one or two solutions with $\frac{6}{19}-x_7<0$ and the other entries positive when $A\geq \alpha$.

For $A<1$, we see that for $A\leq \frac1{\alpha}$ we have a solution with $-\frac1{19}+x_7<0$ and the other entries positive.

Therefore, we obtain the signatures
\[\begin{cases}\{\emptyset,134578 ,13478,1458,148, 357,37,  5\} & a^2\leq \frac1\alpha\\
\{\emptyset,13478,1458 , 357\} & \frac1\alpha<a^2< \alpha\\
\{\emptyset,123678, 12568 ,13478,1458,234567, 246,357\} & a^2\geq\alpha\end{cases}.\]

For the two one-parameter families \texttt{8531:58a} and \texttt{8531:58b},
we compute
\[X=\left( \frac{3}{14},x_7,\frac{5}{14}-x_7,-\frac{1}{14}+x_7,\frac{3}{14}-x_7,\frac{3}{14}-x_7,x_7,\frac{3}{14}\right),\]
giving
\[ -196 \frac{ x_7^{2} {(-1+14 x_7)}}{ {(-3+14 x_7)}^{2} {(-5+14 x_7)}}=a^{2}.\]
This equation has solutions with $x_5,x_6<0$ and other entries positive for $a^2\geq 1/8 (191 + 23 \sqrt{69})$ ($\{123678,12568,234567,246\}$); in addition, it has a solution with all entries positive for every value of $a$
($\{\emptyset,13478,1458,357\}$).

For \texttt{842:74}
we compute
\[X= \left(\frac{1}{19}+x_7,\frac{6}{19}-x_7,x_7,\frac{5}{19}-x_7,\frac{5}{19}-x_7,\frac{3}{19},x_7\right),\]
and consequently
\[ -361 \frac{ {(1+19 x_7)} x_7^{2}}{ {(-6+19 x_7)} {(-5+19 x_7)}^{2}}=a^{2}.\]
For any value of $a$ we have solutions with all-positive entries ($\mathbf{S}_0=\{\emptyset,12378,1358,257\}$). In addition, for $a^2\leq (34343 - 323 \sqrt{11305})/48$ we  have $x_3, x_7<0$ and $x_4,x_5>0$, hence $\mathbf{S}=\mathbf{S}_0\cup\{12348,134578,245,47\}$); for $a^2\geq (34343 + 323 \sqrt{11305})/48$ we have $x_3, x_7>0$ and $x_4,x_5<0$, so $\mathbf{S}=\mathbf{S}_0\cup\{124678,14568,234567,346\}$.

For \texttt{842:88}
we have
\[X=\left(x_6+\frac2{11},\frac3{11}-x_6,\frac1{11}-x_6,\frac3{11},x_6,x_6,\frac1{11}-x_6,\frac3{11}\right)\]
giving
\[x_6^2\left(x_6+\frac2{11}\right)=a^2\left(\frac3{11}-x_6\right)\left(\frac1{11}-x_6\right)^2\]
We have a solution with all positive $x_i$ for all values of $a$ ($\mathbf{S}_0=\{\emptyset,12378\}$).
For $a^2\leq(1523 - 77\sqrt{385})/192$ we also have a solution with $x_5,x_6<0$ and $x_3,x_7>0$, giving $\mathbf{S}=\mathbf{S}_0\cup\{134578,245\}$; for $a^2\geq (1523 + 77\sqrt{385})/192$ we also have $x_5,x_6>0$ and $x_3,x_7<0$, so that $\mathbf{S}=\mathbf{S}_0\cup\{124678,346\}$.
\end{proof}

\begin{footnotesize}
{\setlength{\tabcolsep}{2pt}
\begin{longtable}[c]{>{\ttfamily}c C C }
\caption{Families of nice nilpotent Lie algebras of dimension $8$ with $\dim \coker M_\Delta\leq 1$ that admit a diagonal nilsoliton metric of type Nil4\label{table:Nilsoliton8Families}}\\
\toprule
\textnormal{Name} & \g  &  \mathbf{S} \\
\midrule
\endfirsthead
\multicolumn{3}{c}{\tablename\ \thetable\ -- \textit{Continued from previous page}} \\
\toprule
\textnormal{Name} & \g  &  \mathbf{S} \\
\midrule
\endhead
\bottomrule\\[-7pt]
\multicolumn{3}{c}{\tablename\ \thetable\ -- \textit{Continued to next page}} \\
\endfoot
\bottomrule\\[-7pt]
\endlastfoot
8531:46&\begin{array}{c}
0,0,0,e^{12}, a e^{13},\\
e^{14},e^{15}+e^{23},e^{24}+e^{16}
\end{array}&
\begin{array}{rl}
a^2\leq\frac1{\alpha} & \{\emptyset,134578 ,13478,1458,148, 357,37,  5\} \\
 \frac1\alpha<a^2< \alpha& \{\emptyset ,13478,1458, 357\} \\
a^2\geq\alpha&\{\emptyset , 123678, 12568,13478,1458, 234567,246,357\}
\end{array}\\
&&\alpha=\frac1{48}(34343 + 323 \sqrt{11305})\\\cmidrule(lr){3-3}
8531:58a&\begin{array}{c}
0,0,0,e^{12}, a e^{13},\\
e^{14},e^{15}+e^{23},e^{35}+e^{24}+e^{16}
\end{array}&
\begin{array}{rl}
a^2\leq\frac{1}{8}(191+23\sqrt{69}) & \{\emptyset,13478,1458,357\}\\
a^2\geq\frac{1}{8}(191+23\sqrt{69}) & \{\emptyset,123678,12568,13478,1458,234567,246,357\}
\end{array}\\\cmidrule(lr){3-3}
8531:58b&\begin{array}{c}
0,0,0,- e^{12}, a e^{13},\\
e^{14},e^{15}+e^{23},e^{35}+e^{24}+e^{16}
\end{array}&
\begin{array}{rl}
a^2\leq\frac{1}{8}(191+23\sqrt{69}) & \{\emptyset,13478,1458,357\}\\
a^2\geq\frac{1}{8}(191+23\sqrt{69}) & \{\emptyset,13478,1458,357,246,234567,12568,123678\}
\end{array}\\\cmidrule(lr){3-3}
852:23&\begin{array}{c}
0,0,0,e^{12}, a e^{13},\\
e^{23},e^{35}+e^{14},e^{24}+e^{36}
\end{array}&\{\emptyset,12378,125678,356\}\\\cmidrule(lr){3-3}
842:41&\begin{array}{c}
0,0,0,0, a e^{12},\\
e^{13},e^{24}+e^{15},e^{34}+e^{16}
\end{array}&\{\emptyset,12378,1267,1358,156,235678,257,368\}\\\cmidrule(lr){3-3}
842:74&\begin{array}{c}
0,0,0,0, a e^{12},\\
e^{13},e^{24}+e^{15},e^{34}+e^{25}+e^{16}
\end{array}&
\begin{array}{rl}
a^2\leq \alpha_1 & \{\emptyset,12348,12378,134578,1358,257,245,47\}\\
\alpha_1< a^2 < \alpha_2 & \{\emptyset,12378,1358,257\}\\
\alpha_2 \leq a^2 &\{\emptyset,12378,124678,1358,14568,234567,257,346\}\\
\end{array}\\
&&\alpha_1=\frac{1}{48}(34343 - 323 \sqrt{11305}),\quad \alpha_2=\frac{1}{48}(34343 + 323 \sqrt{11305})\\\cmidrule(lr){3-3}
842:88&\begin{array}{c}
0,0,0,0, a e^{12},\\
e^{13},e^{34}+e^{25}+e^{16},e^{24}+e^{36}+e^{15}
\end{array}&
\begin{array}{rl}
a^2\leq \beta_1 & \{\emptyset,12378,134578,245\}\\
\beta_1< a^2< \beta_2 & \{\emptyset,12378\}\\
\beta_2\leq a^2 & \{\emptyset,12378,124678,346\}\\
\end{array}\\
&&\beta_1=\frac{1}{192}(1523 - 77\sqrt{385}),\quad\beta_2=\frac{1}{192}(1523 + 77\sqrt{385})\\
\end{longtable}
}
\end{footnotesize}

For dimension $9$, even the corank one case appears to be intractable, since there are $72$ families of nice Lie algebras with corank one, for which \ref{enum:condP} is a parametric equation. For corank zero, however, we easily obtain:
\begin{proposition}
\label{proposition:9}
The irreducible nice nilpotent Lie algebras of dimension $9$ with $\dim \coker M_\Delta=0$ that admit a diagonal nilsoliton metric of type Nil4 are listed in Table~\tablenovecoker; for each Lie algebra, the last column gives the set of signatures of diagonal metrics satisfying~\eqref{eqn:normalizednil4}.
\end{proposition}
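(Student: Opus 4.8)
The plan is to apply Corollary~\ref{cor:KHLP} case by case, exactly as in Theorem~\ref{thm:nice6} and Proposition~\ref{prop:8}, while exploiting the fact that the hypothesis $\dim\coker M_\Delta=0$ collapses the genuinely hard part of the criterion. The first step is to record that $\dim\coker M_\Delta=0$ is equivalent to $\ker\tran{M_\Delta}=0$, since $\dim\ker\tran{M_\Delta}=m-\operatorname{rank}M_\Delta=\dim\coker M_\Delta$. A basis $\alpha_1,\dotsc,\alpha_k$ of $\ker\tran{M_\Delta}$ is then empty, so condition~\ref{enum:condP} is vacuous; in particular, the nonzero structure constants $c$ do not enter the criterion at all, and no polynomial equation has to be solved.

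The second step is to pin down $X$. With the normalization $\lambda=-\frac12$, condition~\ref{enum:condK} reads $M_\Delta\tran{M_\Delta}X=[1]$, as in~\eqref{eq:condKnormalized}. Since $\langle M_\Delta\tran{M_\Delta}y,y\rangle=\norm{\tran{M_\Delta}y}^2$, the matrix $M_\Delta\tran{M_\Delta}$ is positive definite whenever $\tran{M_\Delta}$ is injective, hence invertible under our hypothesis. Thus $X$ is uniquely determined, $X=b$, and Corollary~\ref{cor:KHLP} reduces to two checks on this single vector: condition~\ref{enum:condH}, namely that no entry of $b$ vanishes; and, when that holds, condition~\ref{enum:condL}, which describes the admissible signatures as the solution set of the affine $\Z_2$-linear system $\logsign X=M_{\Delta,2}\delta$. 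The Nil4 requirement is ensured by restricting from the outset to Lie algebras with nonzero diagonal Nikolayevsky derivation, as in~\eqref{eqn:normalizednil4}.

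With these reductions the proof becomes a finite, purely linear computation. For each of the finitely many irreducible nice nilpotent Lie algebras of dimension $9$ with $\dim\coker M_\Delta=0$ in the classification of~\cite{ContiRossi:Construction}, I would solve $M_\Delta\tran{M_\Delta}b=[1]$ for $b$, test~\ref{enum:condH}, and in the surviving cases solve the $\Z_2$-linear system of~\ref{enum:condL} to enumerate the signatures, carrying out these steps with the program~\cite{demonblast} as in Theorem~\ref{thm:nice6}; the output fills Table~\tablenovecoker.

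I expect the only real obstacle to be the bookkeeping over a large number of cases, rather than any conceptual difficulty. The features that complicate dimensions $7$ and $8$ --- the polynomial system coming from~\ref{enum:condP} and the one-parameter families on which it depends --- are entirely absent here, because corank zero forces $\ker\tran{M_\Delta}=0$ and hence leaves no freedom to deform the structure constants, so no families occur. In particular none of the delicate case analyses of Lemmas~\ref{lemma:754321:9}--\ref{lemma:75421:4} is needed, and every case is settled mechanically.
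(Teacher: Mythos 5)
Your proposal is correct and matches the paper's (implicit) approach: the paper states this proposition without a written proof, relying on exactly the reduction you describe — corank zero makes $\tran{M_\Delta}$ injective, so $X=b$ is the unique solution of $M_\Delta\tran{M_\Delta}X=[1]$, condition~\ref{enum:condP} is vacuous, no one-parameter families occur, and only the linear checks \ref{enum:condH} and \ref{enum:condL} remain, carried out case by case with the program~\cite{demonblast}. Your explicit justification of why $M_\Delta\tran{M_\Delta}$ is invertible and why the structure constants drop out is a faithful and slightly more detailed account of what the paper leaves to the reader.
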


Things become easier if we restrict to Riemannian signature: indeed, a straightforward application of Corollary~\ref{cor:positivesignature} gives:
\begin{theorem}
The irreducible nice nilpotent Lie algebras of dimension $8$ with $\dim \coker M_\Delta>1$ (respectively, dimension $9$ and $\dim \coker M_\Delta>0$) that admit a Riemannian nilsoliton metric are listed in Table~\tableottoriemannian\ (resp. Table~\tablenoveriemannian); for each Lie algebra, the column $\mathbf{S}_0$ gives the set of signatures of nilsoliton metrics obtained by applying an element of $\ker M_{\Delta,2}$.
\end{theorem}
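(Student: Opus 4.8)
The plan is to reduce the entire statement to Corollary~\ref{cor:positivesignature}, which is exactly what makes the Riemannian case tractable irrespective of the corank. For Riemannian signature $\delta=0$ we have $M_{\Delta,2}\delta=0$ trivially, so that corollary asserts that $\g$ admits a Riemannian nilsoliton metric if and only if there is a vector $X\in\R^m$ with all entries \emph{strictly positive} satisfying~\ref{enum:condK}. After the normalization $\lambda=-\frac12$, condition~\ref{enum:condK} is the affine linear condition $M_\Delta\tran{M_\Delta} X=[1]$ of~\eqref{eq:condKnormalized}, whose solution set is the coset $-2\lambda b+\ker\tran{M_\Delta}$. Crucially, the polynomial condition~\ref{enum:condP}, whose number of equations equals $\dim\coker M_\Delta$ and which is the obstruction to classifying arbitrary signatures in these dimensions, plays no role here: Corollary~\ref{cor:positivesignature} guarantees that a positive solution of the linear condition alone already yields a Riemannian nilsoliton. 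Thus the problem becomes purely linear.

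First I would discard the Lie algebras whose diagonal Nikolayevsky derivation vanishes, since no metric of type Nil4 exists for those (Theorem~\ref{thm:Xequalsb}). For each remaining Lie algebra in the two classification lists of~\cite{ContiRossi:Construction} --- dimension $8$ with $\dim\coker M_\Delta>1$, and dimension $9$ with $\dim\coker M_\Delta>0$ --- I would solve the linear system~\eqref{eq:condKnormalized}, obtaining $X$ as an explicit affine family parametrized by $\ker\tran{M_\Delta}$, and then test whether this affine subspace meets the open positive orthant. This is a linear feasibility problem, decidable exactly, and is carried out case by case by the program~\cite{demonblast} already used in Theorem~\ref{thm:nice6} and Proposition~\ref{prop:8}. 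A Lie algebra is placed in the appropriate table precisely when this feasibility test succeeds.

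It remains to justify the description of the column $\mathbf{S}_0$. Once a positive $X$ exists, Remark~\ref{rem:wick} (equivalently the Wick-rotation construction of Proposition~\ref{prop:wick}) shows that $(-1)^\delta g$ is again a nilsoliton for every $\delta\in\ker M_{\Delta,2}$, so $\ker M_{\Delta,2}\subseteq\mathbf{S}$ and hence $\mathbf{S}_0=\mathbf{S}\cap\ker M_{\Delta,2}=\ker M_{\Delta,2}$. Conversely, if no positive $X$ exists, then by Corollary~\ref{cor:positivesignature} no signature in $\ker M_{\Delta,2}$ lies in $\mathbf{S}$, so $\mathbf{S}_0=\emptyset$ and the Lie algebra admits no Riemannian nilsoliton. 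In particular the all-positive signature lies in $\mathbf{S}_0$ exactly when a Riemannian nilsoliton exists, and $\mathbf{S}_0$ is then computed simply as the $\Z_2$-kernel of the reduced root matrix $M_{\Delta,2}$, a finite linear computation over $\Z_2$ that I would again delegate to the program.

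I do not expect a genuine mathematical obstacle here --- the statement is a direct consequence of Corollary~\ref{cor:positivesignature}, and the paper itself frames it as a straightforward application. The only substantive work is bookkeeping over the full classification lists: respecting the parameter conventions of Remark~\ref{remark:accorgimenti}, and performing the linear-feasibility and kernel computations correctly and exhaustively. If there is any delicate point, it is that one must verify \emph{strict} positivity of $X$ (condition~\ref{enum:condH} being subsumed into the strict inequalities $x_i>0$), rather than mere nonnegativity, so that the resulting metric is genuinely positive definite.
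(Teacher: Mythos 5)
Your approach is the same as the paper's: reduce to Corollary~\ref{cor:positivesignature}, so that existence becomes a linear feasibility problem (does the affine subspace $-2\lambda b+\ker\tran{M_\Delta}$ meet the open positive orthant?), run it case by case with~\cite{demonblast}, and read off $\mathbf{S}_0=\ker M_{\Delta,2}$ via Remark~\ref{rem:wick}. All of that matches the paper's proof, including the observation that \ref{enum:condP} drops out entirely in this regime.

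There is one step you skip, and it is needed to prove the theorem \emph{as stated}. Corollary~\ref{cor:positivesignature} characterizes the existence of a \emph{diagonal} Riemannian nilsoliton metric, whereas the theorem classifies the nice Lie algebras admitting a Riemannian nilsoliton metric with no diagonality assumption; your opening paragraph silently conflates the two when you paraphrase the corollary as asserting that ``$\g$ admits a Riemannian nilsoliton metric if and only if there is a positive $X$ satisfying \ref{enum:condK}.'' A priori a Lie algebra could carry a non-diagonal Riemannian nilsoliton while failing the positive-$X$ test, and your argument would wrongly omit it from the table. The paper closes this gap at the outset by invoking \cite[Theorem~3]{Nikolayevsky}: a nice nilpotent Lie algebra has a Riemannian nilsoliton metric if and only if it has a diagonal one. (Note that Remark~\ref{remark:notnecessarilydiagonal} shows this equivalence genuinely fails for indefinite signatures, so it is not something one can wave away.) With that citation added, your proof coincides with the paper's.
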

\begin{proof}
By \cite[Theorem~3]{Nikolayevsky}, a nice nilpotent Lie algebra has a Riemannian nilsoliton metric if and only if it has a diagonal Riemannian nilsoliton metric. In addition,~\cite{Nikolayevsky} shows that determining whether a Riemannian diagonal nilsoliton metric exists amounts to solving a system of linear equalities and inequalities
(see also Corollary~\ref{cor:positivesignature}). Case-by-case calculations with~\cite{demonblast} yield the tables Table~\tableottoriemannian\ and Table~\tablenoveriemannian.

The non-Riemannian signatures are computed applying Corollary~\ref{cor:positivesignature}.
\end{proof}

Notice that together with Proposition~\ref{prop:8} and Proposition~\ref{proposition:9}, this gives a full classification of Riemannian nice nilsolitons up to dimension $9$.

\begin{remark}
For each nice Lie algebra, regardless of the corank of $M_\Delta$, one can determine the set of signatures
\begin{equation}
\label{eqn:insiemeinnominato}
\left\{\delta\mid \exists X \text{ satisfying \ref{enum:condK} with $\lambda=-\frac12$, \ref{enum:condH} and } M_{\Delta,2}\delta=\logsign X\right\}.
\end{equation}
Such a signature $\delta$ is in $\mathbf{S}$ if and only if $X$ can be chosen to satisfy \ref{enum:condP}. Thus, if every element of~\eqref{eqn:insiemeinnominato} is in $\mathbf{S}_0$, we deduce that $\mathbf{S}_0=\mathbf{S}$ without having to solve any polynomial equation.

We implemented this strategy in~\cite{demonblast}, using the Fourier-Motzkin algorithm to determine the set~\eqref{eqn:insiemeinnominato}. This allows us to conclude that for some nice Lie algebras $\mathbf{S}_0$ exhausts the set of signatures of diagonal nilsoliton metrics satisfying~\eqref{eqn:normalizednil4}. These Lie algebras are flagged with a check mark in the last column of Table~\tableottoriemannian\ and Table~\tablenoveriemannian. We emphasize that the lack of a check mark does not imply that $\mathbf{S}_0$ is strictly contained in $\mathbf{S}$.
\end{remark}

\begin{remark}
 \label{remark:comparison}
If one only considers Riemannian signature, our tables can be compared to the existing classifications as follows. In dimension $7$, our results are compatible with those of~\cite{FernandezCulma}, though we only consider nice Lie algebras here. In dimension $8$, we recover the classification of~\cite{KadiogluPayne}, but our result is more general since we do not require  that the root matrix be surjective and the eigenvalues of the Nikolayevsky derivation distinct. For filiform Lie algebras, we recover the results of~\cite{Arroyo:Filiform} concerning the existence of a Riemannian nilsoliton metric on filiform Lie algebras of dimension $8$. We remark that all the Lie algebras appearing in~\cite{Arroyo:Filiform} are nice: indeed, they already appear in a nice basis except $\lie{m}_1(8)$, which is isomorphic to \texttt{8654321:5} under a simple change of basis. Table~\ref{table:FiliformDim8} makes explicit the correspondence between our nice Lie algebras and the Lie algebras of~\cite{Arroyo:Filiform}.
\end{remark}

{\setlength{\tabcolsep}{2pt}
\begin{table}[thp]
\centering
\caption{Correspondence with~\cite{Arroyo:Filiform} for filiform $8$-dimensional Lie algebras \label{table:FiliformDim8}}
\begin{tabular}{>{\ttfamily}c  C  C c}
\toprule
\textnormal{Name}& \textnormal{\cite{Arroyo:Filiform}} & \g & Riem. Nil.\\
\midrule
8654321:1 & \lie{m}_0(8) & 0,0,e^{12},e^{13},e^{14},e^{15},e^{16},e^{17}&$\checkmark$\\
8654321:2 & \lie{g}_1(8) & 0,0,e^{12},e^{13},e^{14},e^{15},e^{16},e^{17}+e^{23}&$\checkmark$\\
8654321:3 & \lie{d}_1(8) & 0,0,e^{12},e^{13},e^{14},e^{15},e^{16}+e^{23},e^{24}+e^{17}&$\checkmark$\\
8654321:4 & \lie{a}_{-1}(8) & 0,0,- e^{12},e^{13},e^{14},e^{15},e^{16},e^{25}+e^{34}+e^{17}&  \\
8654321:5 & \lie{m}_1(8) & 0,0,- e^{12},- e^{13},e^{14},e^{15},e^{16},e^{45}+e^{36}+e^{27}&$\checkmark$\\
8654321:6 & \lie{c}_{1,0}(8) & 0,0,e^{12},e^{13},e^{14},e^{15}+e^{23},e^{24}+e^{16},e^{25}+e^{17}&\\
8654321:7 & \lie{a}_0(8) & 0,0,e^{12},e^{13},e^{14},e^{15}+e^{23},e^{24}+e^{16},e^{34}+e^{17}&$\checkmark$\\
8654321:11 & \lie{s}_1(8) & 0,0,- e^{12},- e^{13},e^{14},e^{15},e^{16}+e^{23},e^{36}+e^{27}+e^{45}&$\checkmark$\\
8654321:12 & \lie{m}_2(8) & 0,0,e^{12},e^{13},e^{23}+e^{14},e^{24}+e^{15},e^{25}+e^{16},e^{26}+e^{17}& \\
8654321:14 & \begin{array}{c}
\lie{a}_t(8)\\ t\neq0,-1
\end{array} & \begin{array}{c}
0,0,- e^{12} {(-1+a_3)},e^{13}, e^{14} a_3,\\
e^{15}+e^{23},e^{24}+e^{16},e^{25}+e^{34}+e^{17}
\end{array}&$\checkmark$\\
8654321:15 & \lie{g}_{-2}(8) & 0,0,- e^{12},e^{13},e^{14},2  e^{15},e^{25}+e^{34}+e^{16},e^{35}+e^{26}+e^{17}&\\
8654321:17 & \lie{k}_1(8) & 0,0,- e^{12},e^{13},e^{14},-e^{15}+e^{23},e^{24}+e^{16},e^{36}+e^{27}+e^{45}&$\checkmark$\\
8654321:19 & \lie{g}_0(8) & \begin{array}{c}
0,0,e^{12},e^{13},e^{23}+e^{14},e^{24}+e^{15},\\
e^{25}+e^{34}+2 e^{16},e^{35}+e^{17}
\end{array}&$\checkmark$\\
8654321:20 & \lie{g}_{-1}(8) & \begin{array}{c}
0,0,e^{12},- e^{13},e^{23}+e^{14},e^{24}-e^{15},\\
e^{34}+e^{16},e^{35}+e^{26}+e^{17}
\end{array}&$\checkmark$\\
8654321:24 & \begin{array}{c}
\lie{g}_{\alpha}(8)\\ \alpha\neq0,-1,-2
\end{array}& \begin{array}{c}
0,0, e^{12} {(1-a_4)}, e^{13} a_4,e^{23}+e^{14},\\
e^{24}+ e^{15} a_4, e^{16} {(2-a_4)}+e^{25}+e^{34},e^{35}+e^{26}+e^{17}
\end{array}&$\checkmark$\\
8654321:25 & \lie{b}(8) & \begin{array}{cc}
0,0,- e^{12},- e^{13},-\frac{1}{2} e^{23}+\frac{3}{2} e^{14},\\
\frac{1}{2} e^{24}+e^{15},e^{25}+e^{34}+e^{16},e^{36}+e^{27}+e^{45}
\end{array}&$\checkmark$\\
\bottomrule
\end{tabular}
\end{table}
}

\FloatBarrier

\bibliographystyle{plain}
\bibliography{NiceNilsoliton}

\begin{thebibliography}{10}

\bibitem{Arroyo:Filiform}
R.~M. Arroyo.
\newblock Filiform nilsolitons of dimension 8.
\newblock {\em Rocky Mountain J. Math.}, 41(4):1025--1043, 2011.

\bibitem{demonblast}
D.~Conti.
\newblock {DEMONbLAST}, a program to compute {D}iagonal {E}instein {M}etrics
  {O}n {N}ice {L}ie {A}lgebras of {S}urjective {T}ype.
\newblock \url{https://github.com/diego-conti/DEMONbLAST}.

\bibitem{ContiRossi:IndefiniteNilsolitons}
D.~Conti and F.~A. Rossi.
\newblock Indefinite nilsolitons and {E}instein solvmanifolds.
\newblock arXiv:2105.09209.

\bibitem{ContiRossi:Construction}
D.~Conti and F.~A. Rossi.
\newblock {C}onstruction of nice nilpotent {L}ie groups.
\newblock {\em Journal of Algebra}, 525:311 -- 340, 2019.

\bibitem{ContiRossi:RicciFlat}
D.~Conti and F.~A. Rossi.
\newblock {R}icci-flat and {E}instein pseudoriemannian nilmanifolds.
\newblock {\em Complex Manifolds}, 6(1):170--193, 2019.

\bibitem{ContiRossi:EinsteinNice}
D.~Conti and F.~A. Rossi.
\newblock Indefinite {E}instein metrics on nice {L}ie groups.
\newblock {\em {F}orum {M}athematicum}, 32(6):1599--1619, 2020.

\bibitem{FernandezCulma}
E.~A. Fern\'{a}ndez-Culma.
\newblock Classification of nilsoliton metrics in dimension seven.
\newblock {\em J. Geom. Phys.}, 86:164--179, 2014.

\bibitem{Heber:noncompact}
J.~Heber.
\newblock Noncompact homogeneous {E}instein spaces.
\newblock {\em Invent. Math.}, 133(2):279--352, 1998.

\bibitem{Helleland:WickRotations}
C.~Helleland.
\newblock {W}ick-rotations of pseudo-{R}iemannian {L}ie groups.
\newblock {\em Journal of Geometry and Physics}, 158:103902, 2020.

\bibitem{Jablonski:Orbits}
M.~Jablonski.
\newblock Distinguished orbits of reductive groups.
\newblock {\em Rocky Mountain J. Math.}, 42(5):1521--1549, 2012.

\bibitem{KadiogluPayne}
H.~Kadioglu and T.~L. Payne.
\newblock Computational methods for nilsoliton metric {L}ie algebras {I}.
\newblock {\em J. Symbolic Comput.}, 50:350--373, 2013.

\bibitem{Lauret:RicciSoliton}
J.~Lauret.
\newblock Ricci soliton homogeneous nilmanifolds.
\newblock {\em Math. Ann.}, 319(4):715--733, 2001.

\bibitem{Lauret:Finding}
J.~Lauret.
\newblock Finding {E}instein solvmanifolds by a variational method.
\newblock {\em Math. Z.}, 241(1):83--99, 2002.

\bibitem{Lauret:Einstein_solvmanifolds}
J.~Lauret.
\newblock Einstein solvmanifolds are standard.
\newblock {\em Ann. of Math. (2)}, 172(3):1859--1877, 2010.

\bibitem{LauretWill:Einstein}
J.~Lauret and C.~Will.
\newblock Einstein solvmanifolds: existence and non-existence questions.
\newblock {\em Math. Ann.}, 350(1):199--225, 2011.

\bibitem{LauretWill:OnTheDiagonalization}
J.~Lauret and C.~Will.
\newblock On the diagonalization of the {R}icci flow on {L}ie groups.
\newblock {\em Proc. Amer. Math. Soc.}, 141(10):3651--3663, 2013.

\bibitem{Nikolayevsky:EinsteinDerivation}
Y.~Nikolayevsky.
\newblock Einstein solvmanifolds with a simple {E}instein derivation.
\newblock {\em Geom. Dedicata}, 135:87--102, 2008.

\bibitem{Nikolayevsky}
Y.~Nikolayevsky.
\newblock Einstein solvmanifolds and the pre-{E}instein derivation.
\newblock {\em Trans. Amer. Math. Soc.}, 363(8):3935--3958, 2011.

\bibitem{Payne:TheExistence}
T.~L. Payne.
\newblock The existence of soliton metrics for nilpotent {L}ie groups.
\newblock {\em Geom. Dedicata}, 145:71--88, 2010.

\bibitem{Payne:Applications}
T.~L. Payne.
\newblock Applications of index sets and {N}ikolayevsky derivations to positive
  rank nilpotent {L}ie algebras.
\newblock {\em J. Lie Theory}, 24(1):1--27, 2014.

\bibitem{Will:RankOne}
C.~Will.
\newblock Rank-one {E}instein solvmanifolds of dimension 7.
\newblock {\em Differential Geom. Appl.}, 19(3):307--318, 2003.

\bibitem{Yan:Pseudo-RiemannianEinsteinhomogeneous}
Z.~Yan.
\newblock {P}seudo-{R}iemannian {E}instein metrics on noncompact homogeneous
  spaces.
\newblock {\em J.Geom.}, 111, 2020.

\end{thebibliography}

\small\noindent Dipartimento di Matematica e Applicazioni, Universit\`a di Milano Bicocca, via Cozzi 55, 20125 Milano, Italy.\\
\texttt{diego.conti@unimib.it}\\
\texttt{federico.rossi@unimib.it}

\end{document}